\newcommand{\tfs}[1]
{
	\ifthenelse{\equal{\f@shape}{n}}{\ensuremath{\mathrm{#1}}}
	{\ifthenelse{\equal{\f@shape}{sc}}{\ensuremath{\mathrm{#1}}}
		{\ifthenelse{\equal{\f@shape}{it}}{\ensuremath{\mathit{#1}}}
			{\ifthenelse{\equal{\f@shape}{sl}}{\ensuremath{\mathit{#1}}}{}	
			}
		}
	}
}
\newcommand{\btfs}[1]
{
	\ifthenelse{\equal{\f@shape}{n}}{\ensuremath{\mathrm{#1}}}
	{\ifthenelse{\equal{\f@shape}{sc}}{\ensuremath{\mathrm{#1}}}
		{\ifthenelse{\equal{\f@shape}{it}}{\ensuremath{\mathit{#1}}}
			{\ifthenelse{\equal{\f@shape}{sl}}{\ensuremath{\mathit{#1}}}{}	
			}
		}
	}
}
\newcommand{\bbfont}{\mathbbm}
\newcommand{\NN}{{\bbfont N}}
\newcommand{\RR}{{\bbfont R}}
\newcommand{\ulp}{{\textup{(}}}
\newcommand{\urp}{{\textup{)}}}
\newcommand{\uppars}[1]{\ulp #1\urp}
\newcommand{\abs}[1]{{\lvert #1 \rvert}}
\newcommand{\norm}[1]{{\lVert #1 \rVert}}
\newcommand{\rnorm}[1]{\Vert #1 \rVert_{\tfs r}}
\newcommand{\braces}[1]{{\{ #1\}}}
\newcommand{\lrabs}[1]{{\left\lvert #1 \right\rvert}}
\newcommand{\lrbraces}[1]{{\left\{ #1\right\}}}
\newcommand{\set}[1]{\braces{\,#1\,}}
\newcommand{\lrset}[1]{\lrbraces{\,#1\,}}
\newcommand{\di}[1]{\,{\tfs d} #1}
\newcommand{\bounded}{{\tfs B}}
\newcommand{\linear}{{\tfs L}}
\newcommand{\regular}{\linear_{\tfs r}}
\newcommand{\ob}{\linear_{\tfs b}}
\newcommand{\nob}{\linear_{\tfs{nob}}}
\newcommand{\soc}{\linear_{\sigma\tfs{oc}}}
\newcommand{\unitball}{{\tfs B}_1}
\theoremstyle{plain}
\newtheorem{theorem0}{Theorem}[section]
\newtheorem{theorem}[theorem0]{Theorem}
\newtheorem{proposition}[theorem0]{Proposition}
\newtheorem{lemma}[theorem0]{Lemma}
\newtheorem{corollary}[theorem0]{Corollary}
\newtheorem*{theorem*}{Theorem}
\newtheorem*{proposition*}{Proposition}
\newtheorem*{lemma*}{Lemma}
\newtheorem*{corollary*}{Corollary}
\newtheorem*{conjecture*}{Conjecture}
\theoremstyle{definition}
\newtheorem{definition}[theorem0]{Definition}
\newtheorem{remark}[theorem0]{Remark}
\newtheorem*{definition*}{Definition}
\newtheorem*{example*}{Example}
\newtheorem*{remark*}{Remark}
\setlist[enumerate,1]{label=\textup{(\arabic*)},ref=\arabic*}
\setlist[enumerate,2]{label=\textup{(\alph*)},ref=\arabic{enumi}.\alph*}
\setlist[enumerate,3]{label=\textup{(\roman*)},ref=\arabic{enumi}.\alph{enumii}.\roman*}
\setlist[enumerate,4]{label=\textup{(\Alph*)},ref=\arabic{enumi}.\alph{enumii}.\roman{enumiii}.\Alph*}
\crefname{theorem}{Theorem}{Theorems}
\crefname{proposition}{Proposition}{Propositions}
\crefname{lemma}{Lemma}{Lemmas}
\crefname{corollary}{Corollary}{Corollaries}
\crefname{conjecture}{Conjecture}{Conjectures}
\crefname{definition}{Definition}{Definitions}
\crefname{example}{Example}{Examples}
\crefname{remark}{Remark}{Remarks}
\crefname{section}{Section}{Sections}
\crefname{subsection}{Section}{Sections}
\crefname{subsubsection}{Section}{Sections}
\crefname{equation}{equation}{equations}
\crefname{enumi}{part}{parts}
\crefname{enumii}{part}{parts}
\crefname{enumiii}{part}{parts}
\crefname{enumiv}{part}{parts}
\newcommand{\enclosepart}[1]{(#1)}
\newcommand{\partref}[1]{\enclosepart{\ref{#1}}}
\numberwithin{equation}{section}
\newcommand{\Dc}{De\-de\-kind com\-plete}
\newcommand{\sDc}{$\sigma$-De\-de\-kind com\-plete}
\newcommand{\setsup}{\sup}
\newcommand{\setinf}{\inf}
\newcommand{\supp}{\mathrm{supp}\,}
\newcommand{\zerofunction}{\mathbf{0} }
\newcommand{\onefunction}{\mathbf{1}}
\newcommand{\indicator}[1]{\chi_{#1}}
\newcommand{\pos}[1]{{#1^+}}
\newcommand{\largest}{\infty}
\newcommand{\seq}[1]{\{{#1}_n\}_{n=1}^{\infty}}
\newcommand{\net}[1]{\{{#1}_\lambda\}_{\lambda\in \Lambda}}
\newcommand{\Ell}{{\tfs L}}
\newcommand{\os}{E}
\newcommand{\bl}{E}
\newcommand{\vl}{E}
\newcommand{\vltwo}{F}
\newcommand{\posop}{T}
\newcommand{\posoptwo}{S}
\newcommand{\pset}{X}
\newcommand{\ts}{X}
\newcommand{\posos}{\pos{\os}}
\newcommand{\osext}{\overline{\os}}
\newcommand{\pososext}{\overline{\posos}}
\newcommand{\posR}{\pos{\RR}}
\newcommand{\posRext}{\overline{\pos{\RR}}}
\newcommand{\posmap}{T}
\newcommand{\cont}[1]{{\tfs{C}}(#1)}
\newcommand{\conto}[1]{\tfs{C}_0(#1)}
\newcommand{\contc}[1]{\tfs{C}_{\tfs{c}}(#1)}
\newcommand{\contts}{\cont{\ts}}
\newcommand{\contots}{\conto{\ts}}
\newcommand{\contcts}{\contc{\ts}}
\newcommand{\odual}[1]{{#1^{\sim}}}
\newcommand{\ocdual}[1]{{#1_{\tfs {oc}}^{\sim}}}
\newcommand{\odualos}{\odual{\os}}
\newcommand{\ocdualos}{\ocdual{\os}}
\newcommand{\obounded}{\linear_{\tfs b}}
\newcommand{\ocontinuous}{\linear_{\tfs{oc}}}
\newcommand{\socontinuous}{\linear_{\sigma\tfs{oc}}}
\newcommand{\alg}{\Omega}
\newcommand{\borel}{\mathscr B}
\newcommand{\mss}{\Delta}
\newcommand{\msstwo}{\Gamma}
\newcommand{\ms}{(\pset,\alg)}
\newcommand{\npm}{\mu}
\newcommand{\npn}{\nu}
\newcommand{\spaceofmeasuresletter}{{\tfs M}}
\newcommand{\posmeas}{{\spaceofmeasuresletter(\pset,\alg,\posos)}}
\newcommand{\posextmeas}{{\spaceofmeasuresletter(\pset,\alg,\pososext)}}
\newcommand{\posmeasts}{{\spaceofmeasuresletter(\ts,\borel,\posos)}}
\newcommand{\posextmeasts}{{\spaceofmeasuresletter(\ts,\borel,\pososext)}}
\newcommand{\posrBmeas}{{\spaceofmeasuresletter_{\tfs {rB}}(\ts,\borel,\posos)}}
\newcommand{\posextrBmeas}{{\spaceofmeasuresletter_{\tfs {rB}}(\ts,\borel,\pososext)}}
\newcommand{\smeas}{{\spaceofmeasuresletter(\pset,\alg,\vl)}}
\newcommand{\smeasts}{{\spaceofmeasuresletter(\ts,\borel,\vl)}}
\newcommand{\rBsmeas}{{\spaceofmeasuresletter_{\tfs{rB}}(\ts,\borel,\vl)}}
\newcommand{\boundedmeasfun}  {{\mathcal B}(\pset,\alg,\RR)}
\newcommand{\orderintegral}[3]{{\int_{#1}^{\tfs {o}}\! {#2}\di {#3}}}
\newcommand{\ointm}[1]{\orderintegral{\pset}{#1}{\npm}}
\newcommand{\opint}[1]{I_{#1}}
\begin{document}

	\title[Riesz representation theorems]{Riesz representation theorems for vector lattices and Banach lattices of regular  operators}

	\author{Marcel de Jeu}
	
	\address{Mathematical Institute, Leiden University, P.O.\ Box 9512, 2300 RA Leiden, The Netherlands\\
		and\\
		Department of Mathematics and Applied Mathematics, University of Pretoria, Corner of Lynnwood Road and Roper Street, Hatfield 0083, Pretoria,
		South Africa}
	
	\email{mdejeu@math.leidenuniv.nl}
	
	\thanks{This work was completed with the support of National Natural Science Foundation of China(NSFC12201439), Natural Science Foundation of Sichuan Province (2024NSFSC1339) and Erasmus+ICM programme (KA171). }
	\author{Xingni Jiang}
	\address{College of Mathematics, Sichuan University, No.\ 24, South Section, First Ring Road, Chengdu, P.R.\ China}
	\email{x.jiang@scu.edu.cn}
	
	\subjclass{Primary 47B65; Secondary 28B15, 46A40, 46B42}
	
	\keywords{Vector-valued measure, order integral, vector lattice, Banach lattice, norm to order bounded operator, Riesz representation theorem}
	
	\date{January 5, 2025}
	
	\begin{abstract}
		For a non-empty locally compact Hausdorff space $X$ and a Dedekind complete normal vector lattice $E$, we show that the vector lattice of norm to order bounded operators from ${\text C}_{\text c}(X)$ or ${\text C}_0(X)$ into $E$ is isomorphic to the vector lattice of $E$-valued regular Borel measures on $X$. When $E$ is an order continuous Banach lattice, the isomorphism is an isometric isomorphism between Banach lattices. When $X$ is compact, every regular operator from $\mathrm{C}(X)$ into $E$ is norm to order bounded. For some spaces $E$, such as KB-spaces or the regular operators on a KB-space, every regular operator from ${\mathrm C}_0(X)$ into $E$ is norm to order bounded. Additional results are obtained for the whole space of regular operators from ${\text C}_{\text c}(X)$  into an order continuous Banach lattice.\\
		\noindent As a preparation, vector lattices and Banach lattices, resp.\ cones, of measures with values in a Dedekind complete vector lattice $E$, resp.\ in the extended positive cone of $E$, are investigated, as well as vector and Banach lattices of norm to order bounded operators.\\
		\noindent When $E$ is the real numbers, our results specialise to the well-known Riesz representation theorems for the order and norm duals of ${\text C}_{\text c}(X)$ and ${\text C}_0(X)$.
	\end{abstract}
	
	\maketitle
	\section{Introduction and overview}\label{sec:introduction}
	
	\noindent Let $\ts$ be a non-empty locally compact Hausdorff space. When $\posop\colon\contcts\to\RR$ is a continuous positive functional, the Riesz representation theorem furnishes a unique finite regular Borel measure $\npm$ such that
	\begin{equation}\label{eq:riesz_representation}
		T(f)=\int_\ts f\di{\npm(x)}
	\end{equation}
	for $f\in\contcts$. Developing the theory further, one shows that the norm dual of $\contcts$ is isometrically isomorphic to the Banach lattice of real-valued regular Borel measures on $\ts$, supplied with the total variation norm. In the current paper, similar isomorphisms are given for vector lattices and Banach lattices of regular operators $\posop\colon \contcts\to\vl$ or $\posop\colon\contots\to\vl$ into vector lattices or Banach lattices $\vl$. The measures are then $\vl$-valued. As will become clear in the overview of the paper, one cannot expect that, for example, the vector lattice of \emph{all} regular operators from $\contcts$ into a general \Dc\ vector lattice $\vl$ is isomorphic to a vector lattice of $\vl$-valued measures via the integral.
This is only possible for vector lattices of norm to order bounded operators. The "actual" theorem when $\vl=\RR$ is that the Banach lattice of real-valued regular Borel measures on $\ts$ and that of the norm to order bounded operators from $\contcts$ to $\RR$ are isometric via the integral.\footnote{See \cref{res:order_continuous_banach_lattice_2}.} Because in $\RR$ the norm and order bounded subsets coincide, the latter Banach lattice is also the norm dual of $\contcts$.
	
	When $\posop\colon\contcts\to\RR$ is positive but not necessarily continuous, i.e, not necessarily norm to order bounded, there is still a regular Borel measure representing $\posop$ as in \cref{eq:riesz_representation}, but it need not be finite. The order dual of $\contcts$ can then be described in terms of two measures, possibly both infinite. We generalise this to a description of the regular operators $\posop\colon\contcts\to\vl$ when $\vl$ is an order continuous Banach lattice.

	This paper is organised as follows.
	
	In \cref{sec:preliminaries}, we include the necessary notation and definitions. We also give a concise definition of the so-called order integral of real-valued functions with respect to a measure taking values in the extended positive cone of a \sDc\ vector lattice. This integral will be used to formulate our representation theorems with. The idea of such an integral goes back to the seminal work by Wright; see \cite{wright:1969a,wright:1969b}, among others. A systematic treatment of the theory of the order integral was included in the second author's 2018 PhD thesis \cite{jiang_THESIS:2018} and appeared as a paper  \cite{de_jeu_jiang:2022a} in 2022. Meanwhile, Kusraev and Tasoev independently developed the theory of the closely related so-called Kantorovich-Wright integral, which was published as a part of their 2017 paper \cite{kusraev_tasoev:2017}. Comparing \cite{kusraev_tasoev:2017} and \cite{de_jeu_jiang:2022a}, there is a technical difference in the domains of the measures (initially $\delta$-rings of sets in the former; initially rings of sets in the latter) but, more importantly, the measures in \cite{kusraev_tasoev:2017} are finite whereas they can be infinite in \cite{de_jeu_jiang:2022a}. For our purposes, it is essential that a measure can be infinite to give representation theorems where finite measures cannot suffice.\footnote{See \cref{res:order_continuous_banach_lattice_1}.} The Kantorovich-Wright integral is used in \cite[Theorem~5.1]{kusraev_tasoev:2017} to show that a \sDc\ vector lattice with a weak order unit is isomorphic to the $\Ell^1$-space associated with a suitable measure via the ensuing integration operator. We refer to \cite{kusraev_tasoev:2017, kusraev_tasoev:2018} for further such representation theorems for vector lattices and quasi-Banach lattices. The order integral from \cite{de_jeu_jiang:2022a} is employed in \cite{de_jeu_jiang:2022b} for representation theorems for positive operators.\footnote{See \cref{res:riesz_representation_theorem_for_contcts_finite_normal_case} and \cref{res:riesz_representation_theorem_for_contcts_normed_case}.}
	These are used in \cite{de_jeu_jiang:2021c} to arrive at spectral measures for positive algebra homomorphisms, and are also basic ingredients for the current paper.\footnote{For a compact Hausdorff space $\ts$, it is shown in \cite{tamaeva_tasoev:2024} that the representing measure for a positive $\posop\colon\contts\to\vl$ is a Boolean homomorphism precisely when $\posop$ is a vector lattice homomorphism. There are similar results in \cite[Section~4]{de_jeu_jiang:2021c}.}

	The inclusion of \cref{sec:norm_to_order_bounded_operators} is motivated by the following observation. Suppose that $\vl$ is a \Dc\ vector lattice, that $\npm$ is the difference of two finite measures on a $\sigma$-algebra of subsets of a set $\pset$ taking values in $\posos$, and that $f\colon\pset\to\RR$ is a bounded measurable function. Then the triangle inequality for the order integral implies that
	\[
	\lrabs{\ointm{f}}\leq\int_\pset\abs{f}\,\di{\abs{\mu}}\leq \norm{f}\, {\abs{\npm}}(\pset).
	\]
	Thus, if $\posop$ is an operator from a vector lattice $\vltwo$ of bounded functions on $\ts$ into $\vl$, and if there is to be a finite measure $\npm$ representing $\posop$ in the sense that $\posop(f)=\ointm{f}$ for all $f\in\vltwo$, then $\posop$ must map subsets of $\vltwo$ that are bounded in the (uniform) norm into order bounded subsets of $\vl$. Therefore, if there is to be any hope for a vector lattice of regular operators from $\vltwo$ into $\vl$ to be isomorphic to a vector lattice of $\vl$-valued measures via the order integral, then it must consist of norm to order bounded operators. \cref{sec:norm_to_order_bounded_operators} is concerned with these, in the more general context of operators from a normed vector lattice into a \Dc\ vector lattice. Norm to order bounded operators are regular, but the inclusion may be strict. It is shown that, for a so-called quasi-perfect vector lattice $\vl$, every regular operator from $\contots$ for a non-empty locally compact Hausdorff space is norm to order bounded.

	\cref{sec:cones_and_vector_lattices_of_measures} is concerned with various cones, resp.\ vector lattices, of measures, with values in the extended positive cone of a \Dc\ vector lattice $\vl$, resp.\ in a \Dc\ vector lattice $\vl$. The results obtained are in the same vein as those for $\os=\RR$, but the proofs differ substantially from the ones as usually given in that case. The familiar $\varepsilon$-arguments for $\RR$ exploit the fact that $\RR$ is a vector lattice (or sometimes that it is a Banach lattice) with a one-dimensional positive cone. In the general case, one has to proceed differently and argue purely in terms of the ordering on (the extended positive cone of) a vector lattice. The necessity of proceeding cautiously and giving explicit proofs is perhaps best illustrated by the fact that the generalisation of a known result for $\RR$ is, in fact, false.\footnote{See \cref{rem:counter_example}.}
	
	In the final \cref{sec:riesz_representation_theorems}, the previous sections are combined with two basic representation theorems from \cite{de_jeu_jiang:2022b} to yield Riesz representation theorems for various vector lattices and cones of regular operators with $\contcts$ or $\contots$ as domains. It also contains a representation theorem where the domain of the regular operators consists of bounded measurable functions,  the easy proof of which is independent of the other material in this paper and of \cite{de_jeu_jiang:2022b}. Via the isomorphism in this representation theorem one can see the familiar formulas for the supremum and infimum of two finite measures as Riesz-Kantorovich formulas for operators.

	\section{Preliminaries }\label{sec:preliminaries}
	
	\noindent In this section, we collect the necessary notation, definitions, conventions, and preliminary results.
	
	All vector spaces are over the real numbers. An operator between two vector spaces is linear. Vector lattices are supposed to be Archimedean. For a vector lattice $\vl$, we let $\posos$ denote its positive cone and $\odualos$ its order dual. When $\vl$ and $\vltwo$ are vector lattices, we write $\obounded(\vl,\vltwo)$, $\regular(\vl,\vltwo)$, $\ocontinuous(\vl,\vltwo)$, and $\socontinuous(\vl,\vltwo)$ for the operators from $\vl$ into $\vltwo$ that are, respectively, order bounded; regular; order bounded and order continuous; and order bounded and $\sigma$-order continuous. In general, $\regular(\vl,\vltwo)\subseteq\obounded(\vl,\vltwo)$. When $\vltwo$ is \Dc, $\regular(\vl,\vltwo)=\obounded(\vl,\vltwo)$ and this space is a \Dc\ vector lattice; see \cite[Theorem~1.18]{aliprantis_burkinshaw_POSITIVE_OPERATORS_SPRINGER_REPRINT:2006}.
	
	When $\vl$ and $\vltwo$ are normed spaces, we let $\bounded(\vl,\vltwo)$ denote the bounded operators from $\vl$ into $\vltwo$. When $\vl$ is a normed space, we write $\vl^\ast$ for its norm dual, $\unitball(\vl)$ for its unit ball, and $\pos{\unitball(\vl)}$ for the positive elements of its unit ball when $\vl$ is a normed vector lattice.
	
	Suppose that $\vl$ and $\vltwo$ are Banach lattices. Then $\regular(\vl,\vltwo)\subseteq \bounded(\vl,\vltwo)$; see \cite[Theorem~4.3]{aliprantis_burkinshaw_POSITIVE_OPERATORS_SPRINGER_REPRINT:2006}. If $\vltwo$ is \Dc, so that $\regular(\vl,\vltwo)$ is a vector lattice, the \emph{regular norm} on $\regular(\vl,\vltwo)$ is defined by setting $\rnorm{T}\coloneqq \norm{\abs{T}}$ for $\posmap\in\regular(\vl,\vltwo)$. Supplied with this norm,  $\regular(\vl,\vltwo)$ is a \Dc\ Banach lattice; see \cite[Theorem~4.74]{aliprantis_burkinshaw_POSITIVE_OPERATORS_SPRINGER_REPRINT:2006}.
	
	When $\ts$ is a locally compact Hausdorff space, we let $\borel$ denote its Borel $\sigma$-algebra, i.e., the $\sigma$-algebra that is generated by the open subsets of $\ts$. We write $\contcts$ for the vector lattice of real-valued continuous functions on $\ts$ with compact support, and $\contots$ for the Banach lattice of real-valued continuous functions in $\ts$ that vanish at infinity.
	
	Let $S$ be a non-empty set. We let $\zerofunction$ resp.\ $\onefunction$ denote the real-valued function on $S$ that is 0 resp.\ 1 everywhere. For a subset $S^\prime$ of $S$, we let $\chi_{S^\prime}\colon S\to\{0,1\}$ denote the characteristic function of $S^\prime$ on $S$.
	
	For a vector lattice $\vl$, we form a disjoint union $\osext\coloneqq\os\cup\{\largest\}$, and call the elements of $\osext$ that are in $\os$ \emph{finite}. We extend the partial ordering from $\os$ to $\osext$ by declaring that $x\leq\infty$ for all $x\in\osext$. The addition is extended from $\os$ to $\osext$ by setting $x+\infty\coloneqq\infty$ and $\infty+x\coloneqq\infty$ for $x\in\osext$. We let $\pososext\coloneqq\posos\cup\{\infty\}$ and extend the action of $\posR$ on $\posos$ to an action on $\pososext$ by setting $0\cdot \infty\coloneqq 0$ and $r\cdot \infty\coloneqq\infty$ when $r>0$.
	
	As in \cite{de_jeu_jiang:2022a}, we collect a few basic technical facts that will be used repeatedly in the sequel. The proofs are elementary.
	\begin{lemma} \label{res:operations_in_extended_space}
		Let $\os$ be a vector lattice, and let $S\subseteq \osext$ be non-empty.
		\begin{enumerate}
			\item If $S\subseteq E$ and  $\setsup S$ exists in $\os$, then this supremum is also the supremum of $S$ in $\osext$; likewise for the infimum.\label{part:operations_in_extended_space_1}
			\item If $\setsup S$ exists in $\osext$ and is finite, then $S$ consists of finite elements, and the supremum of $S$  exists in $\os$ and equals the supremum of $S$ in $\osext$.\label{part:operations_in_extended_space_2}
			\item If $S\neq\{\largest\}$, then $\setinf S$ exists in $\osext$ if and only if $\setinf\, (S\cap E)$ exists in $\os$. If this is the case, then these infima are equal.\label{part:operations_in_extended_space_3}
			\item $\setsup S=\infty$ in $\osext$ if and only if $S$ is not bounded from above by a finite element.\label{part:operations_in_extended_space_4}
			\item If $\setsup S$ exists in $\osext$, then, for all $x\in \osext$,  $\setsup\, (x+S)$ exists in $\osext$ and equals $x+\setsup S$; likewise for the infimum.\label{part:operations_in_extended_space_5}
			\item If $\setsup\, (x+S)$ exists in $\osext$ for some $x\in\os$, then $\setsup S$ exists in $\osext$, and $\setsup\, (x+S)=x+\setsup S$; likewise for the infimum.\label{part:operations_in_extended_space_6}
			\item If $\setsup S$ exists in $\osext$, then, for all $r\in\posR$, $\setsup rS$ exists in $\osext$ and equals $r\setsup S$; likewise for the infimum.\label{part:operations_in_extended_space_7}
		\end{enumerate}
	\end{lemma}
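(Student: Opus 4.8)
The plan is to verify each of the seven parts by reducing every statement about suprema and infima in the extended space $\osext$ to the corresponding statement in the vector lattice $\os$ itself, using the definition of the order on $\osext$ (where $x\le\infty$ for all $x$) and the extended arithmetic ($x+\infty=\infty$, $r\cdot\infty=\infty$ for $r>0$, $0\cdot\infty=0$). The single structural fact I would keep in mind throughout is that $\infty$ is the largest element of $\osext$, so it can only be the supremum of $S$ when $S$ has no finite upper bound, and it is never the infimum of $S$ unless $S=\{\infty\}$.

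First I would dispatch the order-theoretic parts. For \partref{part:operations_in_extended_space_1}, if $\setsup S$ exists in $\os$ with $S\subseteq\os$, then any upper bound of $S$ in $\osext$ is either finite (and hence dominates $\setsup S$ by the $\os$-supremum property) or equals $\infty$ (which dominates everything); so the $\os$-supremum remains least in $\osext$. For \partref{part:operations_in_extended_space_2}, a finite supremum $u\in\os$ dominates every element of $S$, forcing $S\subseteq\os$ (since $\infty\not\le u$), and then an upper bound argument transfers back into $\os$. Part \partref{part:operations_in_extended_space_3} is the asymmetric one: since $\infty$ is never a lower bound issue, the finite lower bounds of $S$ coincide with the lower bounds of $S\cap\os$, which gives the equivalence and equality of the two infima. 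Part \partref{part:operations_in_extended_space_4} is essentially the definition unwound: $\setsup S\ne\infty$ forces the supremum to be finite, i.e.\ an actual upper bound in $\os$, and conversely.

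Next I would handle the arithmetic parts \partref{part:operations_in_extended_space_5}, \partref{part:operations_in_extended_space_6}, \partref{part:operations_in_extended_space_7}, which express that translation by $x\in\osext$ and scaling by $r\in\posR$ commute with suprema and infima. The natural approach is a case split on whether the relevant supremum is $\infty$ or finite, combined with the translation-invariance and positive-homogeneity of the order already available in the Archimedean vector lattice $\os$. For \partref{part:operations_in_extended_space_5} with $x=\infty$ the claim is trivial since $\infty+S=\{\infty\}$; for finite $x$ one translates upper bounds back and forth. Part \partref{part:operations_in_extended_space_6} is the converse direction and follows by translating by $-x$ and invoking \partref{part:operations_in_extended_space_5}. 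For \partref{part:operations_in_extended_space_7} the cases $r=0$ and $r>0$ must be separated because of the convention $0\cdot\infty=0$; for $r>0$ one uses that multiplication by a positive scalar is an order isomorphism of $\os$ that extends to one of $\pososext$.

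I do not expect a genuine obstacle here, as the lemma is explicitly described as elementary; the only real care needed is bookkeeping around the distinguished element $\infty$ and the degenerate conventions $0\cdot\infty=0$ and the asymmetry in \partref{part:operations_in_extended_space_3}, where one must remember to exclude the case $S=\{\infty\}$. The mildly delicate point is ensuring in each "only if" direction that a putative extremum does not secretly equal $\infty$ when it should be finite (as in \partref{part:operations_in_extended_space_2}), which is handled by the observation that a finite bound cannot dominate $\infty$.
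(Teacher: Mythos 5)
Your proposal is correct and is exactly the intended argument: the paper offers no proof of this lemma, stating only that ``the proofs are elementary'' and pointing to \cite{de_jeu_jiang:2022a}, and your reduction of each part to order statements in $\os$ via case analysis on the distinguished element $\largest$ (with the degenerate conventions $0\cdot\largest=0$ and the exclusion $S\neq\{\largest\}$ handled as you describe) is precisely that routine verification. One cosmetic point: in part \partref{part:operations_in_extended_space_4}, $\setsup S\neq\largest$ does not by itself force the supremum to exist and be finite when $\os$ is not Dedekind complete, but your actual argument for the equivalence --- $\largest$ is always an upper bound, and it is the \emph{least} one precisely when no finite upper bound exists --- is unaffected by this.
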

	
	\begin{lemma}\label{res:binary_operations_in_extended_space}
		Let $\os$ be a vector lattice.
		\begin{enumerate}
			\item If $A$ and $B$ are two non-empty subsets of $\osext$ such that $\setsup A$ and $\setsup B$ exist in $\osext$, then $\setsup\, (A+B)$ exists in $\osext$ and equals $\setsup A+\setsup B$; likewise for the infima. \label{part:binary_operations_in_extended_space_1}
			\item If $\net{a}$ and $\net{b}$ are nets in  $\osext$ such that $a_\lambda\uparrow a$ and $b_\lambda\uparrow b$ in $\osext$, then $(a_\lambda+b_\lambda)\uparrow (a+b)$ in $\osext$; likewise for decreasing nets.\label{part:binary_operations_in_extended_space_2}
		\end{enumerate}
	\end{lemma}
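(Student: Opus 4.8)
The plan is to derive both parts from the single-variable results in \cref{res:operations_in_extended_space}, so that the extended-arithmetic conventions (the role of $\infty$ as a top element that absorbs addition) never have to be unpacked into a case analysis; they are all already built into \cref{res:operations_in_extended_space}\partref{part:operations_in_extended_space_5}, which is stated for every $x\in\osext$.

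For part \partref{part:binary_operations_in_extended_space_1}, write $\alpha\coloneqq\setsup A$ and $\beta\coloneqq\setsup B$, which exist in $\osext$ by hypothesis, and decompose the sumset as $A+B=\bigcup_{a\in A}(a+B)$. For each fixed $a\in A$, \cref{res:operations_in_extended_space}\partref{part:operations_in_extended_space_5} applied with $S=B$ and the element $a\in\osext$ gives that $\setsup\,(a+B)$ exists and equals $a+\beta$. Running $a$ over $A$, the family of these suprema is $\set{a+\beta:a\in A}$, which is the translate $\beta+A$; a second application of \cref{res:operations_in_extended_space}\partref{part:operations_in_extended_space_5}, now with $S=A$ and the element $\beta\in\osext$, shows that $\setsup\set{a+\beta:a\in A}$ exists and equals $\beta+\alpha=\alpha+\beta$. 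It then remains only to invoke the elementary order-theoretic fact that, in any partially ordered set, the supremum of a union equals the supremum of the suprema of the pieces whenever all suprema involved exist: since each $\setsup\,(a+B)$ exists and $\setsup\set{a+\beta:a\in A}$ exists, $\setsup\,(A+B)$ exists and equals $\alpha+\beta$. The statement for infima is obtained verbatim from the ``infimum'' halves of \cref{res:operations_in_extended_space}\partref{part:operations_in_extended_space_5} and the dual order-theoretic fact.

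For part \partref{part:binary_operations_in_extended_space_2}, I would first record that addition on $\osext$ is monotone, i.e.\ $x\leq x^\prime$ and $y\leq y^\prime$ imply $x+y\leq x^\prime+y^\prime$; the only instances not already covered by monotonicity in $\os$ are those involving $\infty$, and these are immediate from the conventions. Hence, with $\net{a}$ and $\net{b}$ increasing, the net $\set{a_\lambda+b_\lambda}$ is increasing, and $a_\lambda+b_\lambda\leq a+b$ for every $\lambda$, so $a+b$ is an upper bound. To see it is the least upper bound, let $z\in\osext$ be any upper bound of $\set{a_\lambda+b_\lambda}$. The key step is to pass from the diagonal to the full sumset: given $\lambda,\mu\in\Lambda$, directedness of $\Lambda$ yields $\nu\in\Lambda$ with $\nu\geq\lambda$ and $\nu\geq\mu$, whence $a_\lambda+b_\mu\leq a_\nu+b_\nu\leq z$ by monotonicity. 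Thus $z$ dominates every element of $\set{a_\lambda:\lambda\in\Lambda}+\set{b_\mu:\mu\in\Lambda}$, and part \partref{part:binary_operations_in_extended_space_1} gives $a+b=\setsup\,\bigl(\set{a_\lambda:\lambda}+\set{b_\mu:\mu}\bigr)\leq z$. Therefore $(a_\lambda+b_\lambda)\uparrow(a+b)$, and the decreasing case is entirely dual, directedness now producing a lower bound of the same sumset.

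I expect the only genuine obstacle to be this diagonal-to-sumset passage in part \partref{part:binary_operations_in_extended_space_2}: the hypotheses tie the two nets to a common index $\lambda$, whereas part \partref{part:binary_operations_in_extended_space_1} concerns the unconstrained sumset, and it is precisely the directedness of $\Lambda$ that bridges the two. Everything else is either a direct citation of \cref{res:operations_in_extended_space} or the associativity of suprema in a poset, and all the potentially awkward bookkeeping around $\infty$ is absorbed into \cref{res:operations_in_extended_space}\partref{part:operations_in_extended_space_5}.
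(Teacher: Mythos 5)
Your proof is correct. Note that the paper itself offers no proof to compare against: it presents \cref{res:binary_operations_in_extended_space} as a basic fact taken from the earlier paper on the order integral and says only that "the proofs are elementary," so your argument can only be judged on its own merits, and it holds up. The two points one would check are exactly the ones you handle cleanly: first, your reduction of part \partref{part:binary_operations_in_extended_space_1} to \cref{res:operations_in_extended_space}\partref{part:operations_in_extended_space_5} is legitimate because that part is indeed stated for \emph{every} $x\in\osext$ (so the cases where $a=\infty$ or $\beta=\infty$ are covered without any case analysis), and the associativity-of-suprema fact you invoke for the union $\bigcup_{a\in A}(a+B)$ is valid in an arbitrary poset whenever all the suprema involved exist, which you verify; second, the diagonal-to-sumset passage in part \partref{part:binary_operations_in_extended_space_2} via directedness of the common index set $\Lambda$, combined with monotonicity of the extended addition, is precisely the bridge needed to apply part \partref{part:binary_operations_in_extended_space_1}, and the dual argument for decreasing nets goes through verbatim with the infimum halves of the cited lemma.
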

	
	\begin{lemma}\label{res:completeness_properties_of_extended_space}
		Let $\os$ be a vector lattice.
		\begin{enumerate}
			\item If $\os$ is \sDc, then every countable non-empty subset of $\osext$ has a supremum in $\osext$. If the subset is bounded from above by a finite element, then the supremum in $\osext$ equals the supremum in $\os$.\label{part:completeness_properties_of_extended_space_1}
			\item If $\os$ is \Dc, then every non-empty subset of $\osext$ has a supremum in $\osext$. If the subset is bounded from above by a finite element, then the supremum in $\osext$ equals the supremum in $\os$. \label{part:completeness_properties_of_extended_space_3}
		\end{enumerate}
	\end{lemma}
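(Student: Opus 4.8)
The plan is to reduce both parts to the already-established \cref{res:operations_in_extended_space} by a short case analysis, so that no fresh ordering argument is required. In each part, let $S\subseteq\osext$ be non-empty (and, in the first part, countable). I would first dispose of the case $\largest\in S$: since $x\leq\largest$ for every $x\in\osext$ by the definition of the order on $\osext$, the top element $\largest$ is then an upper bound of $S$ that lies in $S$, so $\setsup S=\largest$ in $\osext$.

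The substantive case is $\largest\notin S$, i.e., $S\subseteq\os$. Here I would split once more according to whether $S$ is bounded above by a finite element. If it is not, then \cref{res:operations_in_extended_space}\,\partref{part:operations_in_extended_space_4} yields $\setsup S=\largest$ in $\osext$ at once. If $S$ is bounded above by some $y\in\os$, then in the first part I invoke the $\sigma$-Dedekind completeness of $\os$ — using precisely that $S$ is countable — to produce $\setsup S$ in $\os$, while in the second part full Dedekind completeness does the same with no cardinality restriction. In either case \cref{res:operations_in_extended_space}\,\partref{part:operations_in_extended_space_1} then identifies this supremum computed in $\os$ with the supremum of $S$ in $\osext$. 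This settles the existence statement; and since a set bounded above by a finite element cannot contain $\largest$ (which lies below only itself), it also yields the second sentence of each part, as that sentence is exactly the conclusion of \cref{res:operations_in_extended_space}\,\partref{part:operations_in_extended_space_1}.

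I do not anticipate a real obstacle: this is a bookkeeping lemma recording that adjoining the top element $\largest$ repairs the failure of (countable) suprema to exist in $\os$. The only point needing care is the completeness of the case split — confirming that the dichotomy "bounded above by a finite element, or not" matches exactly the hypotheses of \cref{res:operations_in_extended_space}\,\partref{part:operations_in_extended_space_1} and \partref{part:operations_in_extended_space_4} — together with the observation that countability enters part \partref{part:completeness_properties_of_extended_space_1} solely to unlock $\sigma$-Dedekind completeness and is dispensable in part \partref{part:completeness_properties_of_extended_space_3} because full Dedekind completeness is then available.
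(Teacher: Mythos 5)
Your proof is correct; the paper itself gives no proof of this lemma (it is among the facts from \cite{de_jeu_jiang:2022a} declared elementary), and your exhaustive case split --- $\largest\in S$; $S\subseteq\os$ not bounded above by a finite element, settled by \cref{res:operations_in_extended_space}\,\partref{part:operations_in_extended_space_4}; $S\subseteq\os$ bounded above by a finite element, settled by \uppars{$\sigma$-}De\-de\-kind completeness together with \cref{res:operations_in_extended_space}\,\partref{part:operations_in_extended_space_1} --- is precisely the intended routine argument. Nothing is missing, including the needed observation that a set bounded above by a finite element cannot contain $\largest$, which reduces the final sentence of each part to the bounded case already handled.
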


	In the remainder of this section, we briefly summarise the relevant facts about $\pososext$-valued measures and the integrals that they define on measurable functions with values in the extended positive real numbers and on measurable real-valued functions. The pertinent theory is a generalisation of the one for the Lebesgue integral. A \emph{measurable space} is a pair $\ms$, where $\pset$ is a set and $\alg$ is a $\sigma$-algebra of subsets of $\pset$.
	
	\begin{definition}\label{def:positive_pososext_valued_measure}
		Let $\ms$ be a measurable space, and let $\os$ be a \sDc\  vector lattice. An \emph{$\pososext$-valued measure} is a map $\npm:\alg\rightarrow \pososext$ such that:
		\begin{enumerate}
			\item $\npm(\emptyset)=0$;\label{part:pososext_valued_measure_1}
			\item ($\sigma$-additivity) if $\seq{\mss}$ is a pairwise disjoint sequence in $\alg$, then
			\begin{equation}\label{eq:sigma_additivity}
				\npm\left(\bigcup_{n=1}^\infty\mss_n\right)=\setsup_{N\geq 1}\sum_{n=1}^N\npm(\mss_n)
			\end{equation}
			in $\pososext$.\label{part:pososext_valued_measure_2}
		\end{enumerate}
	\end{definition}
	
	It follows from \cref{res:completeness_properties_of_extended_space} that the supremum in \cref{eq:sigma_additivity} exists.
	
	When $\npm(\pset)\in\posos$, we say that $\npm$ is \emph{finite}, or that it is \emph{$\posos$-valued}. In this case, $\npm(\mss)\in\posos$ for all  $\mss\in\alg$. When $\npm(\pset)=\largest$, $\npm$ it is said to be \emph{infinite}. We shall write $\posextmeas$ for the $\pososext$-valued measures, and $\posmeas$ for the $\posos$-valued measures.
	
	As for the case where $\os=\RR$, there are various regularity properties to be considered for measures on the Borel $\sigma$-algebra $\borel$ of a locally compact Hausdorff space.

	\begin{definition}\label{def: regularity of measures}
		Let $\ts$ be a locally compact Hausdorff space, let $\os$ be a \Dc\ vector lattice, and let $\npm\in\posextmeasts$. Then $\npm$ is called:
		\begin{enumerate}
			\item an \emph{$\pososext$-valued Borel measure \uppars{on $\ts$}} if $\npm(K)\in \os$ for all compact subsets $K$ of $\ts$;
			\item \emph{inner regular at $\mss\in\borel$} if $\npm(\mss)=\setsup\set{\hskip -1pt\npm(K):\hskip -.5pt K \hskip -1pt \text{ is compact and}\ K\subseteq \mss\hskip -1pt}$ in $\pososext$;
			\item \emph{outer regular at $\mss\in\borel$} if $\npm(\mss)=\setinf\set{\npm(V): V\ \text{is open and}\ \mss\subseteq V}$ in $\pososext$;
			\item a \emph{$\pososext$-valued regular Borel measure \uppars{on $\ts$}} if $\npm$ is an $\pososext$-valued Borel measure on $\ts$ that is inner regular at all open subsets of $\ts$ and outer regular at all Borel sets.\footnote{We have followed the terminology as in \cite{aliprantis_burkinshaw_PRINCIPLES_OF_REAL_ANALYSIS_THIRD_EDITION:1998} for $\os=\RR$. When following \cite{folland_REAL_ANALYSIS_SECOND_EDITION:1999}, our $\pososext$-valued regular Borel measures would have been called $\pososext$-valued Radon measures.}
		\end{enumerate}
	\end{definition}
	
	We shall write $\posextrBmeas$ for the $\pososext$-valued regular Borel measures on  $\ts$, and $\posrBmeas$ for the $\posos$-valued regular Borel measures on $\ts$.
	
	We return to the general context of a measure space $\ms$, a \sDc\ vector lattice $\os$, and an $\pososext$-valued measure $\npm\colon\alg\to\pososext$.
	An \emph{elementary} function is a function $\varphi\colon\pset\to\posR$ of the form $\varphi=\sum_{i=1}^n r_i\indicator{\mss_i}$ with $r_1,\dotsc,r_n\in\RR$ and $\mss_1,\dotsc,\mss_n\in\alg$, and where the $\mss_i$ can have infinite measure. The \emph{order integral} of $\varphi$ with respect to $\npm$ is then defined as an element of $\pososext$ by setting $\ointm{\varphi}\coloneqq\sum_{i=1}^n r_i\npm(\mss_i)$; this definition is independent of the choice of a decomposition $\varphi=\sum_{i=1}^n r_i\indicator{\mss_i}$. For an arbitrary measurable $f\colon\ts\to\posRext$, one chooses a sequence $\varphi_n$ of elementary functions such that $\varphi_n(x)\uparrow f(x)$ in $\posRext$ for every $x\in\ts$, and defines  $\ointm{f}\coloneqq\sup_{n\geq 1} \ointm{\varphi_n}\in\pososext$. This is independent of the choice of the sequence. Finally, for a measurable $f\colon\pset\to\RR$ such that $\ointm{f^\pm}<\infty$, one sets $\ointm{f}\coloneqq\ointm{f^+}-\ointm{f^-}$. We shall write $\opint{\npm}(f)$ for $\ointm{f}$.
	
	The further theory for this integral is developed in some detail in \cite{de_jeu_jiang:2022a}, including results such as the monotone convergence theorem, Fatou's lemma, and the dominated convergence theorem. The triangle inequality
	\begin{equation}\label{eq:triangle_inequality}
		\lrabs{\ointm{f}}\leq \ointm{\abs{f}}
	\end{equation}
	holds for all integrable $f\colon\pset\to\RR$; see \cite[Lemma~6.7]{de_jeu_jiang:2022a}.

	\section{Norm to order bounded operators}\label{sec:norm_to_order_bounded_operators}
	
	\noindent Let $\vl$ be a normed vector lattice, and let $\vltwo$ be a vector lattice. An operator $\posop\colon\vl\to\vltwo$ is \emph{norm to order bounded} when it maps norm bounded subsets of $\vl$ into order bounded subsets of $\vltwo$. Equivalently, there should exist a $t\in\vltwo$ such that $\abs{\posop  x}\leq\norm{x}t$ for $x\in\vl$.  As motivated in \cref{sec:introduction}, such operators occur naturally in the context of operators from spaces of bounded real-valued functions into vector lattices that can be represented by measures. We let $\nob(\vl,\vltwo)$ denote the set of norm to order bounded operators from $\vl$ into $\vltwo$. It is a linear subspace of $\ob(\vl,\vltwo)$, and of $\bounded(\vl,\vltwo)$ when $\vltwo$ is also a normed vector lattice.  For a normed vector lattice $\vl$, $\nob(\vl,\RR)=\vl^\ast$.
	
	Suppose that a vector lattice $\vl$ has an order unit $u$. When $\vl$ is supplied with the order unit norm, we have $\regular(\vl,\vltwo)\subset \nob(\vl,\vltwo)$, so that $\nob(\vl,\vltwo)=\regular(\vl,\vltwo)$ when $\vltwo$ is \Dc. In \cref{res:regular_is_nob_for_quasi-perfect} we shall encounter another situation where this equality holds but where $\vl$ need not have an order unit.
	
	\begin{proposition}\label{res:nob_operators_are_ideal}
		Let $\vl$ be a normed vector lattice, and let $\vltwo$ be a \Dc\ vector lattice. Then $\nob(\vl,\vltwo)$ is an ideal in $\regular(\vl,\vltwo)$.
		
		Take $\posop\in\nob(\vl,\vltwo)$ and $t\in\posos$ such that $\abs{\posop x}\leq\norm{x}t$ for $x\in\vl$. Then:
		\begin{enumerate}
			\item $\abs{T}x\leq\norm{x}t$ for $x\in\posos$;
			\item If $\posoptwo\in\regular(\vl,\vltwo)$ and $\abs{\posoptwo}\leq\abs{\posop}$, then $\abs{Sx}\leq 2\norm{x}t$ for $x\in\vl$.
		\end{enumerate}
		Set $t_\posop\coloneqq \sup_{x\in\pos{\unitball(\vl)}}\abs{T}x$. Then $\abs{\posop x}\leq\norm{x}t_\posop$ for $x\in\vl$. If $\vltwo$ is an order continuous Banach lattice and $\pos{\unitball(\vl)}$ is upward directed, then $\rnorm{\posop}=\norm{t_\posop}$.
	\end{proposition}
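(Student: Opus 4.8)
The plan is to carry out everything inside the Dedekind complete vector lattice $\regular(\vl,\vltwo)=\ob(\vl,\vltwo)$, using that $\vltwo$ is \Dc\ so that the modulus $\abs\posop$ exists and is given by the Riesz--Kantorovich formula $\abs\posop x=\setsup\set{\abs{\posop y}:\abs y\leq x}$ for $x\in\posos$; here $t$ denotes the element from the hypothesis, so $\abs{\posop x}\leq\norm x\,t$ for all $x\in\vl$. First I would prove (1): for $x\in\posos$ and any $y\in\vl$ with $\abs y\leq x$, the lattice norm on $\vl$ gives $\norm y\leq\norm x$, whence $\abs{\posop y}\leq\norm y\,t\leq\norm x\,t$; taking the supremum over all such $y$ in the Riesz--Kantorovich formula yields $\abs\posop x\leq\norm x\,t$. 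For (2), take $\posoptwo\in\regular(\vl,\vltwo)$ with $\abs\posoptwo\leq\abs\posop$. The inequality $\abs{\posoptwo x}\leq\abs\posoptwo\abs x$, combined with $\abs\posoptwo\leq\abs\posop$ and (1) applied to $\abs x\in\posos$ (note $\norm{\abs x}=\norm x$), already gives $\abs{\posoptwo x}\leq\abs\posop\abs x\leq\norm x\,t$, which is sharper than the asserted bound $2\norm x\,t$; if one prefers not to invoke $\abs{\posoptwo x}\leq\abs\posoptwo\abs x$, one may instead bound the two terms of $\abs{\posoptwo x}\leq\pos\posoptwo\abs x+\negt\posoptwo\abs x$ separately by $\abs\posop\abs x$, which produces exactly the factor $2$. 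Since $\nob(\vl,\vltwo)$ is already a linear subspace of $\regular(\vl,\vltwo)$ and (2) shows it is solid there, it is an ideal.

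For $t_\posop$, I would first note from (1) that the set $\set{\abs\posop x:x\in\pos{\unitball(\vl)}}$ is majorised by $t$, so its supremum $t_\posop$ exists in $\vltwo$ by Dedekind completeness and satisfies $0\leq t_\posop\leq t$. The estimate $\abs{\posop x}\leq\norm x\,t_\posop$ then follows by normalisation: for $x\neq 0$ the element $\abs x/\norm x$ lies in $\pos{\unitball(\vl)}$, so $\abs\posop(\abs x/\norm x)\leq t_\posop$, i.e.\ $\abs{\posop x}\leq\abs\posop\abs x\leq\norm x\,t_\posop$; the case $x=0$ is trivial.

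The main obstacle is the final identity $\rnorm\posop=\norm{t_\posop}$ under the two extra hypotheses. Recalling that the regular norm equals the operator norm $\norm{\abs\posop}$ of the modulus (valid since $\vltwo$ is \Dc) and that, for the positive operator $\abs\posop$, $\norm{\abs\posop}=\setsup_{x\in\pos{\unitball(\vl)}}\norm{\abs\posop x}$ (because $\abs{\abs\posop x}\leq\abs\posop\abs x$ and $\abs x\in\pos{\unitball(\vl)}$ whenever $x\in\unitball(\vl)$), the claim reduces to the interchange
\[
\setsup_{x\in\pos{\unitball(\vl)}}\norm{\abs\posop x}=\lrnorm{\,\setsup_{x\in\pos{\unitball(\vl)}}\abs\posop x\,}.
\]
Here both hypotheses enter. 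Upward directedness of $\pos{\unitball(\vl)}$ makes $D\coloneqq\set{\abs\posop x:x\in\pos{\unitball(\vl)}}$ upward directed in $\pos\vltwo$, since if $x,y\in\pos{\unitball(\vl)}$ are dominated by some $z\in\pos{\unitball(\vl)}$ then monotonicity of $\abs\posop$ gives $\abs\posop x,\abs\posop y\leq\abs\posop z\in D$; hence, viewed as a net indexed by itself, $D\uparrow t_\posop$. Order continuity of the norm on $\vltwo$ then converts this into norm convergence: from $t_\posop-\abs\posop x\downarrow 0$ we get $\norm{t_\posop-\abs\posop x}\to 0$, so $\norm{\abs\posop x}\to\norm{t_\posop}$, and since $0\leq\abs\posop x\leq t_\posop$ forces $\norm{\abs\posop x}\leq\norm{t_\posop}$, the supremum on the left equals $\norm{t_\posop}$. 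The delicate point is exactly that neither hypothesis is dispensable: without directedness $D$ need not order-converge to its supremum, and without order continuity the passage from order to norm convergence fails.
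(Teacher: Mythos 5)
Your proposal is correct and follows essentially the same route as the paper's proof: the Riesz--Kantorovich formula for part (1), solidity via part (2) to get the ideal property, normalisation for the bound $\abs{\posop x}\leq\norm{x}t_\posop$, and upward directedness plus order continuity to interchange supremum and norm in $\rnorm{\posop}=\norm{\,\abs{\posop}\,}=\sup_{x\in\pos{\unitball(\vl)}}\norm{\,\abs{\posop}x\,}=\norm{t_\posop}$. Your observation in part (2) that $\abs{\posoptwo x}\leq\abs{\posoptwo}\abs{x}\leq\abs{\posop}\abs{x}\leq\norm{x}t$ actually yields the constant $1$ is a valid sharpening; the paper's factor $2$ arises only because it bounds $\abs{\posop}x^{+}+\abs{\posop}x^{-}$ separately, exactly as you note.
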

	
	\begin{proof}
		Take $x\in\posos$. By the Riesz-Kantorovich formula (see \cite[Theorem~1.18]{aliprantis_burkinshaw_POSITIVE_OPERATORS_SPRINGER_REPRINT:2006}), we have  $\abs{T}x=\sup\set{\abs{\posop y}:\abs{y}\leq x}$. If $\abs{y}\leq x$, then $\abs{\posop y}\leq\norm{y}t\leq\norm{x}t$. Hence $\abs{T}x\leq\norm{x}t$ for $x\in\posos$. Suppose that $\posoptwo\in\regular(\vl,\vltwo)$ and $\abs{\posoptwo}\leq\abs{\posop}$. Then $\abs{Sx}\leq\abs{S}\abs{x}\leq\abs{T}\abs{x}\leq\abs{\posop}{x^+}+\abs{\posop}x^-\leq 2\norm{x}t$ for $x\in\vl$.
		
		For $x\in\unitball(\vl)$, we have $\abs{\posop x}\leq\abs{\posop}\abs{x}\leq\sup_{x\in\pos{\unitball(\vl)}}\abs{T}x=t_\posop$, so that $\abs{\posop x}\leq\norm{x}t_\posop$  for $x\in\vl$. If $\vltwo$ is an order continuous Banach lattice and $\pos{\unitball(\vl)}$ is upward directed, then $\rnorm{\posop}=\norm{\,\abs{\posop}\,}=\sup_{x\in\pos{\unitball(\vl)}}\norm{\,\abs{\posop}x\,}=\norm{\sup_{x\in\pos{\unitball(\vl)}}\abs{T}x}=\norm{t_T}$.
	\end{proof}
	
	Clearly, $\pos{\unitball(\vl)}$ is upward directed for every vector sublattice of an AM-space\footnote{We recall that a Banach lattice $\bl$ is said to be an \emph{AM-space} when $x\wedge y=0$ in $\bl$ implies that $\norm{x\vee y}=\norm{x}\vee\norm{y}$. A Banach lattice is an AM-space if and only if it is lattice isometric to a Banach sublattice of $\cont{K}$ for a compact Hausdorff space $K$, and  it is lattice isometric to such a $\cont{K}$-space if and only if it has a strong order unit; see \cite[Theorem~4.29]{aliprantis_burkinshaw_POSITIVE_OPERATORS_SPRINGER_REPRINT:2006}.}. The examples we have in mind in the current paper are $\contcts$ and $\contots$.
	
	\begin{proposition}\label{res:norm_to_order_bounded_closed_ideal}
		Let $\vl$ be a normed vector lattice such that $\pos{\unitball(\vl)}$ is upward directed, and let $\vltwo$ be an order continuous Banach lattice. Then $\nob(\vl,\vltwo)$ is a norm closed ideal in the Banach lattice $\regular(\vl,\vltwo)$.
	\end{proposition}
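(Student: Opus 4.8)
The plan is to take for granted that $\regular(\vl,\vltwo)$ is a Banach lattice under the regular norm $\rnorm{\cdot}$ (which holds because an order continuous Banach lattice is \Dc) and that $\nob(\vl,\vltwo)$ is an ideal in it, the latter being exactly the first assertion of \cref{res:nob_operators_are_ideal}. The only thing left to establish is therefore that $\nob(\vl,\vltwo)$ is norm closed, and since we are working in a normed space it suffices to argue sequentially. The tool I would use throughout is the assignment $\posop\mapsto t_\posop$ from \cref{res:nob_operators_are_ideal}, where $t_\posop=\sup_{x\in\pos{\unitball(\vl)}}\abs{\posop}x$ satisfies $\abs{\posop x}\leq\norm{x}t_\posop$ for all $x\in\vl$ and $\rnorm{\posop}=\norm{t_\posop}$.

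The central step, which I expect to be the main obstacle, is to show that this assignment is non-expansive from $\pars{\nob(\vl,\vltwo),\rnorm{\cdot}}$ into $\vltwo$, i.e.\ that $\norm{t_\posop-t_\posoptwo}\leq\rnorm{\posop-\posoptwo}$ for $\posop,\posoptwo\in\nob(\vl,\vltwo)$. To this end I would first prove the order inequality $\abs{t_\posop-t_\posoptwo}\leq t_{\posop-\posoptwo}$ in $\vltwo$. Fix $x\in\pos{\unitball(\vl)}$. Writing $\abs{\posop}x=\abs{\posoptwo}x+\pars{\abs{\posop}-\abs{\posoptwo}}x$, using the inequality $Rx\leq\abs{R}x$ for $x\geq 0$ with $R=\abs{\posop}-\abs{\posoptwo}$, and invoking the reverse triangle inequality $\abs{\,\abs{\posop}-\abs{\posoptwo}\,}\leq\abs{\posop-\posoptwo}$ in the vector lattice $\regular(\vl,\vltwo)$, one obtains
\[
\abs{\posop}x\leq\abs{\posoptwo}x+\abs{\posop-\posoptwo}x\leq t_\posoptwo+t_{\posop-\posoptwo},
\]
the last bound coming from $\abs{\posoptwo}x\leq t_\posoptwo$ and $\abs{\posop-\posoptwo}x\leq t_{\posop-\posoptwo}$ for $x\in\pos{\unitball(\vl)}$. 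As the right-hand side is independent of $x$, taking the supremum over $\pos{\unitball(\vl)}$ yields $t_\posop\leq t_\posoptwo+t_{\posop-\posoptwo}$; by symmetry $t_\posoptwo\leq t_\posop+t_{\posop-\posoptwo}$, whence $\abs{t_\posop-t_\posoptwo}\leq t_{\posop-\posoptwo}$. Since $\norm{\cdot}$ is a lattice norm this gives $\norm{t_\posop-t_\posoptwo}\leq\norm{t_{\posop-\posoptwo}}=\rnorm{\posop-\posoptwo}$, using \cref{res:nob_operators_are_ideal} once more for the final equality. The delicate points are the correct manipulation of the lattice operations in the operator space together with the fact that the defining suprema exist ($\vltwo$ being \Dc) and interact well with the order; this is precisely where the structural hypotheses enter.

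With non-expansiveness in hand, the rest is a routine limiting argument. Let $\seq{\posop}\subseteq\nob(\vl,\vltwo)$ converge in $\rnorm{\cdot}$ to some $\posop\in\regular(\vl,\vltwo)$. Then $\seq{\posop}$ is Cauchy, so by the above the sequence $\{t_{\posop_n}\}$ is Cauchy in the Banach lattice $\vltwo$ and converges in norm to some $t\in\vltwo$; moreover $t\geq 0$ because the positive cone of $\vltwo$ is norm closed. Fix $x\in\vl$. Since $\norm{\cdot}\leq\rnorm{\cdot}$ on $\regular(\vl,\vltwo)$, we have $\posop_n x\to\posop x$ in norm, hence $\abs{\posop_n x}\to\abs{\posop x}$ in norm by the $1$-Lipschitz continuity of the modulus, while $\norm{x}t_{\posop_n}\to\norm{x}t$. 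The elements $\norm{x}t_{\posop_n}-\abs{\posop_n x}$ are positive and converge in norm to $\norm{x}t-\abs{\posop x}$, so norm closedness of the positive cone gives $\abs{\posop x}\leq\norm{x}t$. As $x\in\vl$ was arbitrary, $\posop$ is norm to order bounded with bound $t$, i.e.\ $\posop\in\nob(\vl,\vltwo)$. This proves that $\nob(\vl,\vltwo)$ is norm closed, completing the argument.
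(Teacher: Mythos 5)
Your proof is correct, but it takes a genuinely different route from the paper's. The paper establishes closedness via the standard series criterion for subspaces of a Banach space: given $\seq{\posop}\subseteq\nob(\vl,\vltwo)$ with $\sum_{n=1}^\infty\rnorm{\posop_n}<\infty$, the identity $\rnorm{\posop_n}=\norm{t_{\posop_n}}$ from \cref{res:nob_operators_are_ideal} allows one to define $t\coloneqq\sum_{n=1}^\infty t_{\posop_n}\in\pos{\vltwo}$, and then the pointwise estimate $\lrabs{\pars{\sum_{n=1}^\infty\posop_n}(x)}\leq\sum_{n=1}^\infty\abs{\posop_n x}\leq\norm{x}t$ shows that the sum is again norm to order bounded; no comparison of $t_\posop$ with $t_\posoptwo$ for distinct operators is ever needed. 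You instead prove the sharper quantitative lemma $\abs{t_\posop-t_\posoptwo}\leq t_{\posop-\posoptwo}$, so that $\posop\mapsto t_\posop$ is non-expansive for the regular norm, and finish with a direct Cauchy-sequence argument using norm closedness of the positive cone. Your lattice manipulations are sound: $\regular(\vl,\vltwo)$ is a Dedekind complete vector lattice because $\vltwo$ is (order continuity implies Dedekind completeness), so the reverse triangle inequality $\abs{\,\abs{\posop}-\abs{\posoptwo}\,}\leq\abs{\posop-\posoptwo}$ is available, evaluation at positive vectors is monotone, and $t_{\posop-\posoptwo}$ exists since $\posop-\posoptwo\in\nob(\vl,\vltwo)$. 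Both arguments pivot on \cref{res:nob_operators_are_ideal}, and both invoke the upward directedness of $\pos{\unitball(\vl)}$ and the order continuity of $\vltwo$ only through the equality $\rnorm{\posop}=\norm{t_\posop}$. The paper's proof is shorter; yours buys the extra structural fact that $\posop\mapsto t_\posop$ is $1$-Lipschitz (indeed $t_\posop\leq t_\posoptwo+t_{\posop-\posoptwo}$), and it exhibits the dominating element of the limit operator explicitly as $t=\lim_n t_{\posop_n}$, where the paper's series argument produces it as $\sum_n t_{\posop_n}$.
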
	
	
	\begin{proof}
		It follows from \cref{res:nob_operators_are_ideal} that $\nob(\vl,\vltwo)$ is an ideal in $\regular(\vl,\vltwo)$; we shall show that it is norm closed. For this, suppose that  $\seq{\posop}\subseteq\nob(\vl,\vltwo)$ is such that $\sum_{n=1}^\infty\rnorm{\posop_n}<\infty$. The series $\sum_{n=1}^\infty T_n$ is norm convergent in $\regular(\vl,\vltwo)$ in the regular norm; we have to show that its sum is norm to order bounded.
		
		In the notation of  \cref{res:nob_operators_are_ideal}, we have $\sum_{n=1}^\infty\norm{t_{\posop_n}}<\infty$, so we can define $t\coloneqq\sum_{n=1}^\infty t_{\posop_n}\in\pos{\vl}$. Take $x\in\vl$. Then
		\begin{align*}
			\lrabs{\left(\sum_{n=1}^\infty T_n\right) x }&=\lrabs{\sum_{n=1}^\infty T_n x}\\
			&\leq \sum_{n=1}^\infty \abs{T_n x}\\
			&\leq \sum_{n=1}^\infty \norm{x}t_{\posop_n}\\
			&=\norm{x} t.
		\end{align*}
		Hence $\sum_{n=1}^\infty T_n$ is norm to order bounded.	
	\end{proof}

	The routine proof of the following result is omitted.
	
	\begin{lemma}\label{res:restriction_is_isomorphism}
		Let $\vl_0$ be a norm dense vector sublattice of a normed vector lattice $\vl$, and let $\vltwo$ be a Banach lattice. For $\posop\in\bounded(\vl_0,\vltwo)$, let $\posop^{\tfs{e}}$ denote its extension to an element of $\bounded(\vl,\vltwo)$.
		\begin{enumerate}
			\item\label{part:restriction_is_isomorphism_1} If $t\in\pos{\vltwo}$ is such that $\abs{Tx}\leq \norm{x} t$ for $x\in\vl_0$, then $\abs{T^{\tfs{e}}x}\leq \norm{x} t$ for $x\in\vl$.
			\item\label{part:restriction_is_isomorphism_2} The map $\posop\mapsto\posop^{\tfs e}$ establishes a bipositive linear isomorphism between $\pos{\nob(\vl_0,\vltwo)}-\pos{\nob(\vl_0,\vltwo)}$ and $\pos{\nob(\vl,\vltwo)}-\pos{\nob(\vl,\vltwo)}$;
			\item\label{part:restriction_is_isomorphism_3} When $\vltwo$ is \Dc, the map $\posop\mapsto\posop^{\tfs e}$ is an isometric vector lattice isomorphism between the normed vector lattices $\nob(\vl_0,\vltwo)$ and $\nob(\vl,\vltwo)$.
		\end{enumerate}
	\end{lemma}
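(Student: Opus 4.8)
The plan is to prove \cref{res:restriction_is_isomorphism} by leveraging the density of $\vl_0$ in $\vl$ together with the characterisation of norm to order bounded operators via a dominating element $t$, as recorded in \cref{sec:norm_to_order_bounded_operators}. The first step is to establish \partref{part:restriction_is_isomorphism_1}. Suppose $t\in\pos{\vltwo}$ satisfies $\abs{Tx}\leq\norm{x}t$ for all $x\in\vl_0$. Given an arbitrary $x\in\vl$, choose a sequence $\seq{x}$ in $\vl_0$ with $x_n\to x$ in norm; then $\posop^{\tfs e}x_n=\posop x_n\to\posop^{\tfs e}x$ in norm, since $\posop^{\tfs e}$ is continuous. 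Because $\abs{\posop x_n}\leq\norm{x_n}t$ and $\norm{x_n}\to\norm{x}$, I would like to pass to the limit. The subtle point is that order relations are not automatically preserved under norm limits in a general Banach lattice, so I cannot simply write $\abs{\posop^{\tfs e}x}\leq\norm{x}t$ at once. The standard remedy is to use that the positive cone and the order intervals $[-\norm{x}t,\norm{x}t]$ are norm closed in a Banach lattice; since each $\posop x_n$ lies in the closed interval $[-\norm{x_n}t,\norm{x_n}t]$ and these intervals converge (in an appropriate sense, using $\norm{x_n}\to\norm{x}$) to $[-\norm{x}t,\norm{x}t]$, the norm limit $\posop^{\tfs e}x$ lies in $[-\norm{x}t,\norm{x}t]$ as well. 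Concretely, I would fix $\varepsilon>0$, note $\abs{\posop x_n}\leq(\norm{x}+\varepsilon)t$ for large $n$, invoke norm-closedness of this fixed interval to get $\abs{\posop^{\tfs e}x}\leq(\norm{x}+\varepsilon)t$, and then let $\varepsilon\downarrow 0$ using that $\bigcap_{\varepsilon>0}[-(\norm{x}+\varepsilon)t,(\norm{x}+\varepsilon)t]=[-\norm{x}t,\norm{x}t]$ in an Archimedean lattice.

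The second step treats \partref{part:restriction_is_isomorphism_2}. Linearity of $\posop\mapsto\posop^{\tfs e}$ is immediate from uniqueness of continuous extensions. For injectivity, if $\posop^{\tfs e}=0$ then $\posop=\posop^{\tfs e}|_{\vl_0}=0$. Part \partref{part:restriction_is_isomorphism_1} shows that the extension of a norm to order bounded operator is again norm to order bounded with the same dominating $t$, so the map sends $\nob(\vl_0,\vltwo)$ into $\nob(\vl,\vltwo)$, and hence carries differences of positive such operators into differences of positive such operators. For surjectivity, any $\posop\in\nob(\vl,\vltwo)$ restricts to an element of $\nob(\vl_0,\vltwo)$ whose extension is $\posop$ itself, by uniqueness of the extension. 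Bipositivity is the key verification: I must show that $\posop\geq 0$ on $\vl_0$ if and only if $\posop^{\tfs e}\geq 0$ on $\vl$. The forward direction again uses norm-closedness of the positive cone $\pos{\vltwo}$, approximating any $x\in\pos{\vl}$ by elements of $\pos{\vl_0}$; here I would use that $\vl_0$ is a sublattice, so a norm-approximating sequence $x_n\to x$ can be replaced by $\pos{x_n}\to\pos{x}=x$, keeping the approximants positive. The reverse direction is trivial by restriction.

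The third step, \partref{part:restriction_is_isomorphism_3}, assumes $\vltwo$ is \Dc. Then $\nob(\vl_0,\vltwo)$ and $\nob(\vl,\vltwo)$ are genuine vector lattices (they are ideals in the respective spaces of regular operators by \cref{res:nob_operators_are_ideal}, which are vector lattices since $\vltwo$ is \Dc), and by \partref{part:restriction_is_isomorphism_2} the extension map is a bipositive linear bijection between them. A bipositive linear bijection between vector lattices is automatically a vector lattice isomorphism, so it remains only to check it is isometric for the regular norm. Using \cref{res:nob_operators_are_ideal}, the regular norm of $\posop$ equals $\norm{t_\posop}$ where $t_\posop=\sup_{x\in\pos{\unitball(\vl_0)}}\abs{\posop}x$, provided $\pos{\unitball(\vl_0)}$ is upward directed; but this upward-directedness hypothesis appears in \cref{res:nob_operators_are_ideal} only for the norm formula, so I would either invoke the convention that these are AM-sublattice situations or argue the isometry directly. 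The cleanest route is to show $t_{\posop^{\tfs e}}=t_\posop$: since $\unitball(\vl_0)$ is norm dense in $\unitball(\vl)$, the suprema defining the dominating elements coincide, using order continuity-free monotone arguments together with part \partref{part:restriction_is_isomorphism_1}. Then $\rnorm{\posop}=\norm{t_\posop}=\norm{t_{\posop^{\tfs e}}}=\rnorm{\posop^{\tfs e}}$.

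The main obstacle throughout is the interplay between norm convergence and order relations, i.e.\ transferring inequalities such as $\abs{\posop x_n}\leq\norm{x_n}t$ from the dense subspace to the whole space. This is exactly the point where Banach-lattice structure of $\vltwo$ (norm-closedness of cones and order intervals) is indispensable, and where the Archimedean property is used to pass from the family of inequalities indexed by $\varepsilon>0$ to the sharp one. Everything else — linearity, injectivity, surjectivity, and the automatic promotion of a bipositive bijection to a lattice isomorphism — is routine, which is presumably why the authors omit the proof.
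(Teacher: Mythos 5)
Your treatment of parts (1) and (2) is correct, and it is exactly the routine argument the paper had in mind when it omitted the proof: norm-closedness of the positive cone and of order intervals in the Banach lattice $\vltwo$, the Archimedean passage from $\abs{\posop^{\tfs e}x}\leq(\norm{x}+\varepsilon)t$ to the sharp inequality, and the sublattice trick $x_n^+\to x^+$ to keep approximants positive are precisely the needed ingredients. The genuine gap is in the isometry step of part (3). Your concluding chain $\rnorm{\posop}=\norm{t_\posop}=\norm{t_{\posop^{\tfs e}}}=\rnorm{\posop^{\tfs e}}$ rests on the identity $\rnorm{\posop}=\norm{t_\posop}$, which is the final statement of \cref{res:nob_operators_are_ideal} and carries two hypotheses absent from part (3): that $\vltwo$ is an \emph{order continuous} Banach lattice (part (3) assumes only that it is \Dc) and that $\pos{\unitball(\vl_0)}$ is upward directed. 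In general one only has $\rnorm{\posop}=\norm{\,\abs{\posop}\,}\leq\norm{t_\posop}$, and the inequality can be strict when the norm of $\vltwo$ fails the Fatou property: for instance, renorm $\ell^\infty$ by the equivalent lattice norm $\norm{x}\coloneqq\max\bigl(\norm{x}_\infty,\,2\limsup_n\abs{x_n}\bigr)$ and let $\posop$ be the inclusion of $c_0$; then $t_\posop=\onefunction$ with $\norm{t_\posop}=2$ while $\rnorm{\posop}=1$. (Your intermediate claim $t_{\posop^{\tfs e}}=t_\posop$ is itself fine: one inequality is monotonicity, the other is part (1) applied to $\abs{\posop}$ with $t=t_\posop$; the flaw is only in equating $\norm{t_\posop}$ with the regular norm.)

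The repair is one line using only what you have already established. Since the extension map is, by part (2), a bipositive linear bijection between the vector lattices $\nob(\vl_0,\vltwo)$ and $\nob(\vl,\vltwo)$ (these are ideals in the respective spaces of regular operators by \cref{res:nob_operators_are_ideal}, whence $\posop^\pm\in\nob$ and the positive cones are generating), it is a lattice isomorphism, so $\abs{\posop^{\tfs e}}=(\abs{\posop})^{\tfs e}$. Combining this with the standard density fact that extension by continuity preserves the operator norm gives
$\rnorm{\posop^{\tfs e}}=\norm{\,\abs{\posop^{\tfs e}}\,}=\norm{\,(\abs{\posop})^{\tfs e}\,}=\norm{\,\abs{\posop}\,}=\rnorm{\posop}$,
with no order continuity or directedness needed. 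With this substitution for your final display, the proof of part (3), and hence of the whole lemma, is complete.
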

	
	Combining \cref{res:norm_to_order_bounded_closed_ideal} and \cref{res:restriction_is_isomorphism} yields the following. It applies, in particular, to the norm dense vector sublattice $\contcts$ of $\contots$.
	
	\begin{theorem}\label{res:isometric_isomorphism_between_banach_lattices_via_restriction} Let $\vl_0$ be a norm dense vector lattice of a normed vector lattice $\vl$, and let $\vltwo$ be an order continuous Banach lattice. When $\pos{\unitball(\vl_0)}$ is upward directed \uppars{equivalently: when $\pos{\unitball(\vl)}$ is upward directed}, the restriction map $\posop\mapsto\posop\!\!\restriction_{\vl_0}$ is an isometric vector lattice isomorphism between the norm closed ideals $\nob(\vl,\vltwo)$ resp.\ $\nob(\vl_0,\vltwo)$ in the Banach lattices $\regular(\vl,\vltwo)$ resp.\ $\regular(\vl_0,\vltwo)$.
	\end{theorem}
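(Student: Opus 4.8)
The plan is to deduce the statement by combining \cref{res:norm_to_order_bounded_closed_ideal} and \cref{res:restriction_is_isomorphism}, after two preliminary observations: first, that an order continuous Banach lattice is automatically \Dc, so that part \partref{part:restriction_is_isomorphism_3} of \cref{res:restriction_is_isomorphism} is available; and second, that the restriction map appearing in the statement is precisely the two-sided inverse of the extension map $\posop\mapsto\posop^{\tfs{e}}$ from \cref{res:restriction_is_isomorphism}. Granting these, the ``closed ideal'' assertions come from \cref{res:norm_to_order_bounded_closed_ideal}, while the ``isometric vector lattice isomorphism'' assertion is inherited from \cref{res:restriction_is_isomorphism}, transported from the extension map to its inverse.

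First I would dispose of the parenthetical equivalence of the two hypotheses. Since $\vl_0$ is a sublattice carrying the restricted norm, upward directedness of $\pos{\unitball(\vl)}$ trivially forces that of $\pos{\unitball(\vl_0)}$: the join of two elements of $\pos{\unitball(\vl_0)}$ is computed in $\vl$ and lands in $\pos{\unitball(\vl)}$, hence in $\pos{\unitball(\vl_0)}$. For the converse I would use that upward directedness of a positive unit ball is equivalent, by rescaling, to $\norm{x\vee y}\leq\max\{\norm x,\norm y\}$ for $x,y\in\pos{\vl}$, and then invoke continuity of the lattice operations and of the norm together with the norm density of $\vl_0$: approximating $x,y\in\pos{\unitball(\vl)}$ by elements of $\pos{\vl_0}$ and passing to the limit yields $\norm{x\vee y}\leq 1$. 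With the equivalence settled, \cref{res:norm_to_order_bounded_closed_ideal} applies to both $\vl$ and $\vl_0$, giving that $\nob(\vl,\vltwo)$ and $\nob(\vl_0,\vltwo)$ are norm closed ideals in $\regular(\vl,\vltwo)$ and $\regular(\vl_0,\vltwo)$ respectively.

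Next I would invoke \cref{res:restriction_is_isomorphism}\partref{part:restriction_is_isomorphism_3}: as $\vltwo$ is \Dc, the extension map $\posop\mapsto\posop^{\tfs{e}}$ is an isometric vector lattice isomorphism of $\nob(\vl_0,\vltwo)$ onto $\nob(\vl,\vltwo)$. The only genuinely new point is to check that the restriction map $\posop\mapsto\posop\restriction_{\vl_0}$ is well defined as a map $\nob(\vl,\vltwo)\to\nob(\vl_0,\vltwo)$ and coincides with the inverse of the extension map. Well-definedness is immediate from the definition of norm to order boundedness together with the fact that the $\vl$-norm and the $\vl_0$-norm agree on $\vl_0$; and since $\posop^{\tfs{e}}$ is the unique bounded extension of $\posop$, one has $\posop^{\tfs{e}}\restriction_{\vl_0}=\posop$ and $(\posoptwo\restriction_{\vl_0})^{\tfs{e}}=\posoptwo$ for $\posoptwo\in\nob(\vl,\vltwo)$, so restriction and extension are mutually inverse bijections. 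Since the inverse of an isometric vector lattice isomorphism is again one, the proof concludes.

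I expect no serious obstacle: the theorem is essentially a repackaging of the two preceding results. The one step that merits a moment of care is the forward direction of the hypothesis equivalence, i.e.\ transferring upward directedness of the positive unit ball up from the dense sublattice $\vl_0$ to $\vl$, where one must control the norms of the approximating joins; everything else is formal bookkeeping about the extension/restriction correspondence.
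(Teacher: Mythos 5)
Your proposal is correct and follows the paper's own route: the paper obtains \cref{res:isometric_isomorphism_between_banach_lattices_via_restriction} precisely by combining \cref{res:norm_to_order_bounded_closed_ideal} with \cref{res:restriction_is_isomorphism}, reading the restriction map as the two-sided inverse of the extension map $\posop\mapsto\posop^{\tfs{e}}$. The details you supply --- that an order continuous Banach lattice is \Dc\ (so part~\partref{part:restriction_is_isomorphism_3} of \cref{res:restriction_is_isomorphism} applies), the density-and-continuity argument transferring upward directedness of the positive unit ball from $\vl_0$ to $\vl$, and the mutual inverseness of restriction and extension --- are exactly the routine verifications the paper leaves implicit.
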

	
	We shall now establish a class of vector lattices $\vl$ with the property that $\regular(\contots,\vl)=\nob(\contots,\vl)$ for locally compact Hausdorff spaces $\ts$. For this, we need a few definitions.

	We recall that a vector lattice $\vl$ is said to be \emph{normal} when the order continuous part $\vl^\sim_{\tfs{oc}}$ of its order dual separates the points. Equivalently, when $x\in\vl$ is such that $x^\prime(x)\geq 0$ for every $x^\prime\in\left(\vl^\sim_{\tfs{oc}}\right)^+$, then $x\in\pos{\vl}$. An order continuous Banach lattice is normal. We also recall from \cite[Definition~6.4]{de_jeu_jiang:2022b} that a vector lattice is said to be \emph{quasi-perfect}\footnote{A vector lattice is called \emph{perfect} if the natural vector lattice homomorphism from $\os$ into $(\vl^\sim_{\tfs{oc}})^\sim_{\tfs{oc}}$ is a surjective isomorphism. By a result of Nakano's (see  \cite[Theorem~1.71]{aliprantis_burkinshaw_POSITIVE_OPERATORS_SPRINGER_REPRINT:2006}), this is equivalent to $\vl$ having the following properties:
		\begin{enumerate}
			\item
			$\os$ is normal;
			\item
			if an increasing net $\net{x}$ in $\posos$ is such that $\sup_\lambda x^\prime(x_\lambda)<\infty$ for each $x^\prime\in\pos{(\ocdualos)}$, then this net has a supremum in $\os$.
	\end{enumerate}} when it has the following properties:
	\begin{enumerate}
		\item\label{2_part:quasi_perfect_spaces_1}
		$\os$ is normal;
		\item\label{2_part:quasi_perfect_spaces_2}
		if an increasing net $\net{x}$ in $\posos$ is such that $\sup_\lambda x^\prime(x_\lambda)<\infty$ for each $x^\prime\in\pos{(\odualos)}$, then this net has a supremum in $\os$.
	\end{enumerate}
	It is clear from the second requirement that a quasi-perfect vector lattice is \Dc.
	
	Finally, we recall (see \cite[p.~232]{aliprantis_burkinshaw_POSITIVE_OPERATORS_SPRINGER_REPRINT:2006}) that a Banach lattice is a KB-space when every norm bounded increasing net in the positive cone is norm convergent, and that the norm on a Banach lattice is said to be a Levi norm if every norm bounded increasing net in the positive cone has a supremum. The KB-spaces are precisely the Banach lattices with an order continuous Levi norm. All AL-spaces\footnote{We recall that a Banach lattice is called an \emph{AL-space} when $x\wedge y=0$ in $\bl$ implies that $\norm{x\vee y}=\norm{x}+\norm{y}$. A Banach lattice is an AL-space if and only if it is lattice isometric to an $\Ell_1(\mu)$-space; see \cite[Theorem~4.27]{aliprantis_burkinshaw_POSITIVE_OPERATORS_SPRINGER_REPRINT:2006}.} and all reflexive Banach lattices (in particular: all $\Ell_p(\mu)$-spaces for positive, possibly infinite, real-valued measures $\mu$ and $1<p<\infty$) are KB-spaces; see \cite[p.~232]{aliprantis_burkinshaw_POSITIVE_OPERATORS_SPRINGER_REPRINT:2006}. The Banach lattice $c_0$ has an order continuous norm, but it is not a KB-space.
	
	The following collects facts from \cite[Lemma~6.6, Proposition~6.7, and the proof of Theorem~6.10]{de_jeu_jiang:2022b}.
	
	\begin{proposition}\label{res:examples_of_quasi-perfect_vector_lattices}
		The following vector lattices are quasi-perfect:
		\begin{enumerate}
			\item perfect vector lattices \uppars{in particular: KB-spaces};
			\item normal Banach lattices with Levi norms \uppars{in particular: $\regular(\vl)$ for KB-spaces $\vl$};
			\item the self-adjoint parts of commutative von Neumann algebras.
		\end{enumerate}
	\end{proposition}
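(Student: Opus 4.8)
The plan is to verify, for each of the three classes, the two defining conditions of a quasi-perfect vector lattice, namely normality together with the supremum property for increasing nets in $\posos$ that are bounded against $\pos{(\odualos)}$. In each case I would reduce the verification to Nakano's characterisation of perfectness recalled in the footnote, to the Banach-lattice facts in \cite{de_jeu_jiang:2022b}, or to standard structure theory, so that the lattice-theoretic work itself stays short.

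For \partref{2_part:quasi_perfect_spaces_1}, I would read the claim off the characterisation in the footnote: $\os$ is perfect exactly when it is normal and every increasing net in $\posos$ that is bounded against $\pos{(\ocdualos)}$ has a supremum. Since $\ocdualos$ is an ideal in $\odualos$, one has $\pos{(\ocdualos)}\subseteq\pos{(\odualos)}$, so a net bounded against all of $\pos{(\odualos)}$ is in particular bounded against $\pos{(\ocdualos)}$. Hence any net meeting the hypothesis in the definition of quasi-perfect meets the hypothesis in Nakano's criterion and therefore has a supremum, and normality is common to both. Thus a perfect vector lattice is quasi-perfect; that KB-spaces are perfect is classical and is in any case recorded in \cite{de_jeu_jiang:2022b}.

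For the normal Banach lattices with Levi norms in \partref{2_part:quasi_perfect_spaces_2}, normality is assumed outright, so only the net condition remains. The key input is the Banach-lattice identity $\odualos=\ndualos$: every order bounded functional is norm continuous and conversely. Given an increasing net $\net{x}$ in $\posos$ with $\sup_\lambda x'(x_\lambda)<\infty$ for every $x'\in\pos{(\odualos)}$, I would split an arbitrary $x'\in\ndualos$ as $x'=\pos{x'}-\negt{x'}$ to obtain $\sup_\lambda\abs{x'(x_\lambda)}<\infty$; thus $\net{x}$ is weakly bounded and, by the uniform boundedness principle, norm bounded. The Levi property of the norm then produces the required supremum. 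For the parenthetical case I would cite from \cite{de_jeu_jiang:2022b} that $\regular(\vl)$ is a normal Banach lattice with a Levi norm whenever $\vl$ is a KB-space, so that it falls under this case.

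For \partref{2_part:quasi_perfect_spaces_2}'s companion \partref{2_part:quasi_perfect_spaces_1}-type case in item three, I would invoke the structure theorem that a commutative von Neumann algebra is $\ast$-isomorphic to $L^\infty(\npm)$ for a localisable measure $\npm$, so that its self-adjoint part is the real Banach lattice $L^\infty(\npm)$. This is Dedekind complete with an order unit, so norm bounded increasing nets are order bounded and hence the norm is Levi; and it is normal because its order continuous dual $L^1(\npm)$ separates its points for localisable $\npm$. Consequently it is covered by \partref{2_part:quasi_perfect_spaces_2}. The main obstacle throughout is not the bookkeeping but securing the right external inputs---chiefly the identity $\odualos=\ndualos$ for Banach lattices, the normality and Levi property of $\regular(\vl)$ for KB-spaces, and the duality properties of $L^\infty(\npm)$---after which each of the three verifications is immediate.
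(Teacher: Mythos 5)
Your proposal is correct. Note that the paper itself offers no in-text argument for this proposition: it is stated as a collection of facts cited from \cite[Lemma~6.6, Proposition~6.7, and the proof of Theorem~6.10]{de_jeu_jiang:2022b}, so your write-up supplies self-contained proofs of what the citation encapsulates, and the arguments you give are the natural (and, as far as the cited source goes, essentially the intended) ones. Your three reductions are all sound: for perfect lattices, the inclusion $\pos{(\ocdualos)}\subseteq\pos{(\odualos)}$ does make the quasi-perfect net hypothesis stronger than the one in Nakano's criterion, so the supremum follows, and KB-spaces are classically perfect; for normal Banach lattices with Levi norms, the identification $\odualos=\ndualos$ (valid for every Banach lattice, since positive functionals are automatically continuous and continuous functionals are order bounded) lets you pass from boundedness against $\pos{(\odualos)}$ to weak boundedness, whence norm boundedness by uniform boundedness and a supremum by the Levi property; and for the third item, the structure theorem realising a commutative von Neumann algebra as $\Ell^\infty(\npm)$ over a localisable measure space correctly reduces it to the second item, with the order-unit norm giving the Levi property via Dedekind completeness and the predual $\Ell^1(\npm)$ (which consists of order continuous functionals and separates points precisely because $\npm$ is localisable) giving normality. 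The only point I would urge you to make explicit in a final write-up is the justification of $\odualos=\ndualos$ and of the separation property of $\Ell^1(\npm)$, since these are exactly the external inputs on which the whole of parts (2) and (3) rest; you have identified them correctly.
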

	
	The following result was essentially already observed in \cite[proof of Theorem~6.8]{de_jeu_jiang:2022b}. We include the argument for the convenience of the reader.
	
	\begin{proposition}\label{res:regular_is_nob_for_quasi-perfect}
		Let $\ts$ be a non-empty locally compact Hausdorff space, and let $\vl$ be a quasi-perfect vector lattice. Then $\regular(\contots,\vl)=\nob(\contots,\vl)$.
	\end{proposition}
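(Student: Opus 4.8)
The plan is to establish the non-trivial inclusion $\regular(\contots,\vl)\subseteq\nob(\contots,\vl)$; the reverse inclusion is the already-noted general fact that norm to order bounded operators are regular. Since $\nob(\contots,\vl)$ is a linear subspace of $\regular(\contots,\vl)$ and every regular operator is a difference of two positive operators, it suffices to show that an arbitrary \emph{positive} operator $\posop\colon\contots\to\vl$ is norm to order bounded. Concretely, I would produce a single $t\in\pos{\vl}$ with $\abs{\posop f}\leq\norm{f}\,t$ for all $f\in\contots$.

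First I would exploit that $\contots$ is an AM-space, so that $\pos{\unitball(\contots)}$ is upward directed. Indexing by this directed set, the family $\set{\posop f : f\in\pos{\unitball(\contots)}}$ is an increasing net in $\pos{\vl}$ by monotonicity of $\posop$. If this net has a supremum $t$ in $\vl$, then $\posop f\leq t$ for every $f\in\pos{\unitball(\contots)}$, and for general $f$ with $\norm{f}\leq 1$ one gets $\abs{\posop f}\leq\posop\abs{f}\leq t$ because $\abs{f}\in\pos{\unitball(\contots)}$; rescaling yields $\abs{\posop f}\leq\norm{f}\,t$ for all $f$, which is exactly what is required. So the whole problem reduces to proving that this increasing net admits a supremum, and this is precisely where quasi-perfectness enters: by its second defining property, it is enough to verify that $\sup_{f}x^\prime(\posop f)<\infty$ for each $x^\prime\in\pos{(\odual{\vl})}$.

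The hard part — and really the only substantial point — is this finiteness. Here I would observe that for a fixed $x^\prime\in\pos{(\odual{\vl})}$ the composition $x^\prime\circ\posop$ is a \emph{positive} linear functional on the Banach lattice $\contots$, and then invoke the standard fact that every positive linear functional on a Banach lattice is automatically norm bounded (the quick gliding-hump argument: an unbounded positive $\varphi$ would admit $g_n\geq 0$ with $\norm{g_n}\leq 2^{-n}$ and $\varphi(g_n)\geq n$, whence $g\coloneqq\sum_n g_n\in\pos{\contots}$ would satisfy $\varphi(g)\geq\sum_{n=1}^N\varphi(g_n)\geq\sum_{n=1}^N n$ for every $N$, contradicting $\varphi(g)<\infty$; see also \cite{aliprantis_burkinshaw_POSITIVE_OPERATORS_SPRINGER_REPRINT:2006}). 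Consequently $x^\prime(\posop f)=(x^\prime\circ\posop)(f)\leq\norm{x^\prime\circ\posop}$ for all $f\in\pos{\unitball(\contots)}$, giving the desired bound $\sup_f x^\prime(\posop f)<\infty$.

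Assembling the pieces, quasi-perfectness then furnishes $t\coloneqq\sup_{f\in\pos{\unitball(\contots)}}\posop f$ in $\vl$, the estimate of the second paragraph shows $\posop\in\nob(\contots,\vl)$, and passing to differences of positive operators completes the proof that $\regular(\contots,\vl)=\nob(\contots,\vl)$. I expect no difficulty beyond the automatic-continuity fact; the only things to keep an eye on are that $\contots$ genuinely is an AM-space with upward directed positive unit ball (so that the indexing net is increasing) and that the supremum in $\vl$ controls $\abs{\posop f}$ uniformly after rescaling.
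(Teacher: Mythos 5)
Your proof is correct and follows essentially the same route as the paper's: the automatic norm continuity of the positive functionals $x^\prime\circ\posop$ on the Banach lattice $\contots$ gives $\sup_f x^\prime(\posop f)<\infty$ over the upward directed set $\pos{\unitball(\contots)}$, and the second defining property of quasi-perfectness then supplies the dominating element $t$. The only cosmetic difference is that you reduce to positive operators via a decomposition $\posop=\posop_1-\posop_2$, whereas the paper runs the identical argument directly on the modulus $\abs{\posop}$, which exists because quasi-perfect vector lattices are Dedekind complete.
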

	
	\begin{proof}
		Take $\posop\in\regular(\contots,\vl)$.  For $x^\prime$ in $\pos{(\odualos)}$, consider the positive linear functional $f\mapsto (x^\prime\circ \abs{\posmap})(f)$ from $\contots$ to $\RR$. Its positivity implies that it is continuous. Hence $\{x^\prime(\abs{\posmap}(f)): f\in\contots, \, \zerofunction\leq f\leq \onefunction \}$ is bounded in $\RR$ for each $x^\prime$ in $\pos{(\odualos)}$. Since the set $\{\abs{\posmap}(f): f\in\contots, \, \zerofunction\leq f\leq \onefunction \}$ is upward directed because $\pos{\unitball(\contots)}$ is directed and $\abs{T}$ is positive, the fact that $\os$ is quasi-perfect now implies that it has a supremum $s$ in $\os$. Then $\abs{\posop f}\leq s$ for all $f\in\unitball(\contots)$, showing that $\posop$ is norm to order bounded.
	\end{proof}

	\section{Cones and vector lattices of measures}\label{sec:cones_and_vector_lattices_of_measures}
	
	\noindent We shall now study various cones and vector lattices of measures. In \cref{sec:riesz_representation_theorems}, these will be used to represent cones and vector lattices of regular operators with.
	
	\subsection{Cones of $\pososext$-valued measures}\label{subsec:structure_of_the set_of_positive_measures}
	
	Let $\ms$ be a measurable space, and let  $\os$ be a \sDc\ vector lattice. In this section, we establish basic properties of $\posextmeas$ and $\posmeas$.
	We define a partial ordering on $\posextmeas$ and $\posmeas$ by saying that $\npm_1\leq\npm_2$ when $\npm_1(\mss)\leq\npm_2(\mss)$ in $\osext$ for all $\mss\in\alg$.
	
	For $\npm$, $\npn\in\posextmeas$ and $r\in\posR$, define $\npm+\npn,r\npm:\alg\to\pososext$ by setting
	\begin{align*}
		(\npm+\npn)(\mss)&\coloneqq\npm(\mss)+\npn(\mss)\\
		(r\npm)(\mss)&\coloneqq r\npm(\mss)
	\end{align*}
	for $\mss\in\alg$. Using part~\partref{part:binary_operations_in_extended_space_2} of \cref{res:binary_operations_in_extended_space} (resp.\ part~\partref{part:operations_in_extended_space_7} of \cref{res:operations_in_extended_space}), one sees easily that $\npm+\npn\in\posextmeas$ (resp.\ $r\npm\in\posextmeas$). Clearly, $\npm+\npn$ is finite if and only if $\npm$ and $\npn$ are; for $r>0$, $r\npm$ is finite if and only if $\npm$ is. Thus $\posextmeas$ (resp.\ $\posmeas$) is a subcone of the cone of all set maps from $\alg$ into $\pososext$ (resp.\ $\posos$). As the next result shows, the cone $\posmeas$ is also a lattice when $\os$ is \Dc. In its proof and that of \cref{res:increasing_nets_of_measures_have_a_supremum}, we shall say that a set map $\xi\colon\alg\to\pososext$ is \emph{monotone} when $\xi(\mss_1)\leq\xi(\mss_2)$ if $\mss_1,\mss_2\in\Omega$ are such that $\mss_1\subseteq\mss_2$.
	
	\begin{theorem}\label{res:lattice_cone_of_positive_measures}
		Let $\ms$ be a measurable space, and let $\os$ be a \Dc\ vector lattice.
		\begin{enumerate}
			\item\label{part:lattice_properties_of_measures_1}
			For $\npm$ and $\npn$ in $\posextmeas$, their supremum $\npm\vee\npn$ in $\posextmeas$ exists. We have
			\begin{equation}\label{eq:sup_formula}
				(\npm\vee\npn)(\mss)=\setsup\set{\npm(\msstwo)+\npn(\mss\setminus \msstwo):\msstwo\in\alg \text{ and } \msstwo\subseteq\mss}
			\end{equation}
			in $\osext$ for $\mss\in\alg$. Furthermore, $\npm\vee\npn$ is finite if and only if $\mu$ and $\nu$ are.
			\item \label{part:lattice_properties_of_measures_2}
			For $\npm,\npn\in\posmeas$, their infimum $\npm\wedge\npn$ in $\posextmeas$ exists. It is a finite measure. We have
			\begin{equation}\label{eq:inf_formula}
				(\npm\wedge\npn)(\mss)=\setinf\set{\npm(\msstwo)+\npn(\mss\setminus \msstwo):\msstwo\in\alg \text{ and } \msstwo\subseteq\mss}
			\end{equation}
			in $\osext$ for $\mss\in\alg$.
			\item\label{part:lattice_properties_of_measures_3}
			For $\npm,\npn\in\posmeas$, we have
			\[
			\npm\vee\npn+\npm\wedge\npn=\npm+\npn.
			\]
		\end{enumerate}
	\end{theorem}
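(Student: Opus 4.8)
The plan is to show that the set functions defined by the right-hand sides of \cref{eq:sup_formula} and \cref{eq:inf_formula} are measures in $\posextmeas$ and then to identify them as the claimed supremum and infimum. Write $\lambda(\mss)$ for the supremum in \cref{eq:sup_formula} and $\kappa(\mss)$ for the infimum in \cref{eq:inf_formula}. Because $\os$ is \Dc, \cref{res:completeness_properties_of_extended_space}\partref{part:completeness_properties_of_extended_space_3} ensures that $\lambda(\mss)$ exists in $\pososext$, and since every element of its indexing family lies in $\pososext$, so does $\lambda(\mss)$. For $\kappa$, finiteness of $\npm$ and $\npn$ makes the indexing family a set of finite elements bounded below by $0$, which therefore has an infimum in $\posos$ since $\os$ is \Dc; by \cref{res:operations_in_extended_space}\partref{part:operations_in_extended_space_3} this is also its infimum in $\pososext$, so $\kappa$ is $\posos$-valued. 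Note also that $\npm$ and $\npn$, being $\sigma$-additive, are finitely additive (apply $\sigma$-additivity to a sequence that is eventually $\emptyset$).

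For part~\partref{part:lattice_properties_of_measures_1}, I would first observe that $\lambda(\emptyset)=0$, and then prove finite additivity: for disjoint $\mss_1,\mss_2$, writing an arbitrary $\msstwo\subseteq\mss_1\cup\mss_2$ as $(\msstwo\cap\mss_1)\cup(\msstwo\cap\mss_2)$ and using additivity of $\npm,\npn$ gives $\lambda(\mss_1\cup\mss_2)\leq\lambda(\mss_1)+\lambda(\mss_2)$, while choosing the two subsets independently and applying \cref{res:binary_operations_in_extended_space}\partref{part:binary_operations_in_extended_space_1} to the two indexing families yields the reverse inequality. The crux is $\sigma$-additivity. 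With $U_N\coloneqq\bigcup_{n=1}^N\mss_n$ and $\mss\coloneqq\bigcup_{n=1}^\infty\mss_n$, finite additivity reduces the goal to $\lambda(\mss)=\setsup_N\lambda(U_N)$; monotonicity of $\lambda$ gives one inequality. For the other, I would fix $\msstwo\subseteq\mss$ and use the $\sigma$-additivity of $\npm,\npn$ to get $\npm(\msstwo\cap U_N)\uparrow\npm(\msstwo)$ and $\npn(U_N\setminus\msstwo)\uparrow\npn(\mss\setminus\msstwo)$; then \cref{res:binary_operations_in_extended_space}\partref{part:binary_operations_in_extended_space_2} gives $\npm(\msstwo\cap U_N)+\npn(U_N\setminus\msstwo)\uparrow\npm(\msstwo)+\npn(\mss\setminus\msstwo)$, and since each term on the left is at most $\lambda(U_N)\leq\setsup_N\lambda(U_N)$, so is its supremum; a final supremum over $\msstwo$ closes the argument. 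That $\lambda=\npm\vee\npn$ then follows: the choices $\msstwo=\mss$ and $\msstwo=\emptyset$ show $\lambda\geq\npm,\npn$, and any measure $\rho\geq\npm,\npn$ satisfies $\rho(\mss)=\rho(\msstwo)+\rho(\mss\setminus\msstwo)\geq\npm(\msstwo)+\npn(\mss\setminus\msstwo)$, so $\rho\geq\lambda$ after a supremum. Finally, $\lambda(\pset)\leq\npm(\pset)+\npn(\pset)$ together with $\npm,\npn\leq\lambda$ gives the equivalence between finiteness of $\lambda$ and of both $\npm$ and $\npn$.

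For part~\partref{part:lattice_properties_of_measures_2}, the facts that $\kappa(\emptyset)=0$, that $\kappa$ is finitely additive (the infimum analogue of the above, again via \cref{res:binary_operations_in_extended_space}\partref{part:binary_operations_in_extended_space_1}), and that $\kappa=\npm\wedge\npn$ (any lower bound $\rho\leq\npm,\npn$ is finite, and $\rho(\mss)=\rho(\msstwo)+\rho(\mss\setminus\msstwo)\leq\npm(\msstwo)+\npn(\mss\setminus\msstwo)$) run parallel to the case of $\lambda$. Here $\sigma$-additivity is easier: finiteness permits the identity $\kappa(\mss)-\kappa(U_N)=\kappa(\mss\setminus U_N)$, and since $\npm(U_N)\uparrow\npm(\mss)$ forces $\npm(\mss\setminus U_N)\downarrow 0$, the bounds $0\leq\kappa(\mss\setminus U_N)\leq\npm(\mss\setminus U_N)$ give $\kappa(\mss\setminus U_N)\downarrow 0$ and hence $\kappa(U_N)\uparrow\kappa(\mss)$, which is $\sigma$-additivity. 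Part~\partref{part:lattice_properties_of_measures_3} is then a brief order computation: fixing $\mss$ and setting $a_\msstwo\coloneqq\npm(\msstwo)+\npn(\mss\setminus\msstwo)$, the involution $\msstwo\mapsto\mss\setminus\msstwo$ permutes the indexing family and satisfies $a_\msstwo+a_{\mss\setminus\msstwo}=\npm(\mss)+\npn(\mss)$; writing $c$ for this finite element, one has $\setsup_\msstwo a_\msstwo=\setsup_\msstwo a_{\mss\setminus\msstwo}=\setsup_\msstwo(c-a_\msstwo)=c-\setinf_\msstwo a_\msstwo$, which is exactly $(\npm\vee\npn)(\mss)+(\npm\wedge\npn)(\mss)=\npm(\mss)+\npn(\mss)$.

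The step I expect to be the main obstacle is the $\sigma$-additivity of $\lambda$ in part~\partref{part:lattice_properties_of_measures_1}. Because $\lambda$ may take the value $\largest$, the subtraction argument available for the finite measure $\kappa$ is not applicable, and the interchange of the defining supremum with the countable sum has to be carried out entirely in terms of the order, relying on the monotone-net behaviour recorded in \cref{res:binary_operations_in_extended_space}\partref{part:binary_operations_in_extended_space_2}.
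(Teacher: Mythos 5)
Your proposal is correct and follows essentially the same route as the paper's proof: finite additivity of both set functions via part~\partref{part:binary_operations_in_extended_space_1} of \cref{res:binary_operations_in_extended_space}, $\sigma$-additivity of the supremum formula by a purely order-theoretic increasing-net argument using part~\partref{part:binary_operations_in_extended_space_2} of that lemma, $\sigma$-additivity of the infimum formula by exploiting finiteness to reduce to continuity from above along the tail sets, and the standard upper/lower-bound and complement arguments for the lattice identifications and part~\partref{part:lattice_properties_of_measures_3}. The only cosmetic differences are that you verify the needed continuity from above inline, using $\kappa\leq\npm$, where the paper invokes \cite[Proposition~4.6]{de_jeu_jiang:2022a}, and that you spell out the involution $\msstwo\mapsto\mss\setminus\msstwo$ for part~\partref{part:lattice_properties_of_measures_3}, which the paper delegates to the real-valued argument in \cite[Theorem~36.1]{aliprantis_burkinshaw_PRINCIPLES_OF_REAL_ANALYSIS_THIRD_EDITION:1998}.
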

	
	The \Dc ness of $\os$ ensures that the suprema occurring in \cref{eq:sup_formula} exist in $\osext$; see part~\partref{part:completeness_properties_of_extended_space_3} of \cref{res:completeness_properties_of_extended_space}.
	
	\begin{proof}
		We prove part~\partref{part:lattice_properties_of_measures_1}. For this, we define a set map  $\xi:\alg\rightarrow\pososext$ by setting
		\begin{equation*}\label{eq:supremum_of_measures}
			\xi(\mss)\coloneqq\setsup\set{\npm(\msstwo)+\npn(\mss\setminus \msstwo):\msstwo\in\alg \text{ and } \msstwo\subseteq\mss}
		\end{equation*}
		for $\mss\in\alg$. We shall verify that $\xi\in\posextmeas$.
		
		It is easy to see that $\xi(\emptyset)=0$ and that $\xi$ is monotone.  
		
		We shall now establish the finite additivity of $\xi$. Suppose that $\mss_1,\mss_2\in\alg$ and that $\mss_1\cap\mss_2=\emptyset$. Then the measurable subsets $\msstwo$ of $\mss_1\cup\mss_2$ are precisely the subsets of the form $\msstwo_1\cup \msstwo_2$, where $\msstwo_1,\msstwo_2\in\alg$ are such that $\msstwo_1\subseteq\mss_1$ and $\msstwo_2\subseteq\mss_2$. Using part~\partref{part:binary_operations_in_extended_space_1} of \cref{res:binary_operations_in_extended_space} in the penultimate step, we therefore see that
		\begin{align*}
			\xi(\mss_1\cup \mss_2)&=\setsup\set{\npm(\msstwo)+\npn((\mss_1\cup \mss_2)\setminus \msstwo): \msstwo\in\alg,\,\msstwo\subseteq \mss_1\cup \mss_2}\\
			&=\setsup\{\,\npm(\msstwo_1\cup \msstwo_2)+\npn((\mss_1\cup \mss_2)\setminus (\msstwo_1\cup \msstwo_2)):\\
			&\quad\quad\quad\msstwo_1,\msstwo_2\in\alg,\,\msstwo_1\subseteq \mss_1,\,\msstwo_2\subseteq\mss_2\,\}\\
			&=\setsup\{\,\npm(\msstwo_1)+\npm(\msstwo_2)+\npn(\mss_1\setminus \msstwo_1)+\npn(\mss_2\setminus \msstwo_2):\\
			&\quad\quad\quad\msstwo_1,\msstwo_2\in\alg,\,\msstwo_1\subseteq \mss_1,\,\msstwo_2\subseteq \msstwo_2\,\}\\
			&=\setsup\set{\npm(\msstwo_1)+\npn(\mss_1 \setminus \msstwo_1): \msstwo_1\in\alg,\, \msstwo_1\subseteq \mss_1}\\
			&\quad\quad+\setsup\set{\npm(\msstwo_2)+\npn(\mss_2 \setminus \msstwo_2): \msstwo_2\in\alg,\, \msstwo_2\subseteq \mss_2}\\
			&=\xi(\mss_1)+\xi(\mss_2).
		\end{align*}
		It follows that $\xi$ is finitely additive.
		
		We shall now show that $\xi$ is $\sigma$-additive. Let $\seq{\mss}$ be a pairwise disjoint sequence in $\alg$, and set $\mss\coloneqq\bigcup_{n=1}^\infty\mss_n$. The facts that $\xi$ is monotone and finitely additive imply that $\xi(\mss)\geq\setsup_{N\geq 1} \sum_{n=1}^N\xi(\mss_n)$. We need to establish the reverse inequality. For every $\msstwo\in\alg$ with $\msstwo\subseteq\mss$, we have, using part~\partref{part:binary_operations_in_extended_space_2} of \cref{res:binary_operations_in_extended_space} in the third step,
		\begin{align*}
			\npm(\msstwo)+\npn(\mss\setminus \msstwo)
			&=\npm\left(\bigcup_{n=1}^\infty(\mss_n\cap \msstwo)\right)+\npn\left(\bigcup_{n=1}^\infty(\mss_n\setminus \msstwo)\right)\\
			&=\setsup_{N\geq 1}\sum_{n=1}^N\npm(\mss_n\cap \msstwo)+\setsup_{N\geq 1}\sum_{n=1}^N\npn(\mss_n\setminus \msstwo)\\
			&=\setsup_{N\geq 1}\sum_{n=1}^N\left(\npm(\mss_n\cap \msstwo)+\npn(\mss_n\setminus \msstwo)\right)\\
			&=\setsup_{N\geq 1}\sum_{n=1}^N\left(\npm(\mss_n\cap \msstwo)+\npn(\mss_n\setminus(\mss_n\cap \msstwo))\right)\\
			&\leq \setsup_{N\geq 1}\sum_{n=1}^N\xi(\mss_n).
		\end{align*}
		Hence $\xi(\mss)\leq \setsup_{N\geq 1}\sum_{n=1}^N\xi(\mss_n)$, as required.
		
		Now that we now that $\xi\in\posextmeas$, the usual argument as in the case $\os=\RR$ shows that $\xi$ is the supremum of $\npm$ and $\npn$ in $\posextmeas$; see \cite[Theorem~36.1]{aliprantis_burkinshaw_PRINCIPLES_OF_REAL_ANALYSIS_THIRD_EDITION:1998}, for example.
		
		We prove part~\partref{part:lattice_properties_of_measures_2}. For this, we define the set map $\rho:\alg\to\pososext$ by setting
		\begin{equation*}\label{eq:infimum_of_measures}
			\rho(\mss)\coloneqq\setinf\set{\npm(\msstwo)+\npn(\mss\setminus \msstwo):\msstwo\in\alg \text{ and } \msstwo\subseteq\mss}.
		\end{equation*}
		for $\mss\in\alg$.
		
		An argument completely analogous to that for $\xi$ above shows that $\rho$ is finitely additive. It remains to show that $\rho$ is $\sigma$-additive.
		
		As a preparation for this, we show that $\rho(\msstwo_n)\downarrow 0$ whenever $\msstwo_1\supseteq \msstwo_2\supseteq\dotsb$ is a decreasing chain in $\alg$ such that $\bigcap_{N=1}^\infty \msstwo_N=\emptyset$. To see this, note that $\rho(\msstwo_N)\leq\npm(\msstwo_N)+\npn(\msstwo_N)$. Since $\npm$ and $\npn$ are now both finite, \cite[Proposition~4.6]{de_jeu_jiang:2022a} shows that $\npm(\msstwo_N)\downarrow 0$ and $\npn(\msstwo_N)\downarrow 0$. Part~\partref{part:binary_operations_in_extended_space_2} of \cref{res:binary_operations_in_extended_space} then yields that $\npm(\msstwo_N)+\npn(\msstwo_N)\downarrow 0$, so that also $\rho(\msstwo_N)\downarrow 0$, as desired.
		
		Let $\seq{\mss}$ be a pairwise disjoint sequence in $\alg$, and set $\mss\coloneqq\bigcup_{n=1}^\infty\mss_n$. Set $\msstwo_1\coloneqq\mss_1$ and $\msstwo_N\coloneqq \mss\setminus\bigcup_{n=1}^{N-1}\mss_n$ for $N\geq 2$. Then $\msstwo_N\downarrow\emptyset$, so $\rho(\msstwo_N)\downarrow 0$. Since $\rho$ is finitely additive, and since $\mss=\msstwo_{N+1}\cup\bigcup_{n=1}^N \mss_n$ is a disjoint union for $N\geq 1$, we see that
		\[
		\rho(\mss)=\rho(\msstwo_{N+1})+\sum_{n=1}^N\rho(\mss_n)
		\]
		for $N\geq 1$, so that
		\[
		\rho(\mss)-\rho(\msstwo_{N+1})=\sum_{n=1}^N\rho(\mss_n)
		\]
		for $N\geq 1$.
		Taking the supremum on each side, we have, since $-\rho(\msstwo_{N+1})\uparrow 0$, that
		\[
		\rho(\mss)=\setsup_{N\geq 1}\sum_{n=1}^N\rho(\mss_n).
		\]
		Hence $\rho$ is $\sigma$-additive.
		
		The usual argument as in the case $\os=\RR$ now shows that $\rho$ is the infimum of $\npm$ and $\npn$ in $\posextmeas$.
		
		The proof for the reals also goes through in general to establish part~\partref{part:lattice_properties_of_measures_3}; see \cite[Theorem~36.1]{aliprantis_burkinshaw_PRINCIPLES_OF_REAL_ANALYSIS_THIRD_EDITION:1998}. 	
	\end{proof}
	
	\begin{remark}\label{rem:counter_example}
		When $\os=\RR$, part~\partref{part:lattice_properties_of_measures_2} (and then also part~\partref{part:lattice_properties_of_measures_3}) of \cref{res:lattice_cone_of_positive_measures} is also valid for infinite $\npm$ and $\npn$. As for the real-valued case of part~\partref{part:lattice_properties_of_measures_1}, this is usually proved using $\varepsilon$-arguments; see \cite[Theorem~36.1]{aliprantis_burkinshaw_PRINCIPLES_OF_REAL_ANALYSIS_THIRD_EDITION:1998}. The above proof shows that the absence of such arguments for general $\os$ can, still for possibly infinite $\mu$ and $\nu$, be circumvented for $\npm\vee\npn$. Part~\partref{part:lattice_properties_of_measures_2}, however, can actually fail for infinite measures in the case of general $\os$, even in the  $\sigma$-finite case. As an example, take a vector lattice $\os$ of dimension at least 2. Choose two non-zero disjoint positive elements $x$ and $y$. We take $\pset=\NN$ and $\alg=2^\NN$. Define $\npm\colon\alg\to\pososext$ by setting $\npm(\mss)\coloneqq \abs{\mss}\cdot x$ and  $\npn(\mss)\coloneqq \abs{\mss}\cdot y$ for $\mss\in\alg$, which is to be read as $\infty$ when the cardinality $\abs{\mss}$ of $\mss$ is infinite. If $\sigma\colon\alg\to\pososext$ is a measure such that $\sigma\leq\npm,\npn$, then $\sigma(\mss)\leq (\abs{\mss}\cdot x)\wedge(\abs{\mss}\cdot y)=0$ when $\abs{\mss}$ is finite. Hence $\sigma=0$. We conclude that $\npm\wedge\npn$ exists in $\posextmeas$ and that it is the zero measure. When $\abs{\mss}$ is finite, $(\npm\wedge\npn)(\mss)$ is still given by the right hand side of \cref{eq:inf_formula}. When $\abs{\mss}$ is infinite, however, and  $\msstwo\subseteq\mss$, at least one of $\msstwo$ and $\mss\setminus \msstwo$ has infinite cardinality. The right hand side of \cref{eq:inf_formula} then equals $\setinf\,\{\infty\}=\infty\neq 0$.
	\end{remark}
	
	We have the following upward order completeness result. The sequential case for $\os=\RR$ is \cite[Theorem~36.2]{aliprantis_burkinshaw_PRINCIPLES_OF_REAL_ANALYSIS_THIRD_EDITION:1998}.
	
	\begin{proposition}\label{res:increasing_nets_of_measures_have_a_supremum}
		Let $\ms$ be a measurable space, and let $\os$ be a \Dc\ vector lattice. Take an increasing net $\net{\npm}$ in $\posextmeas$. Then the set function $\npm\colon\alg\to\pososext$, defined by setting
		\[
		\npm(\mss)\coloneqq\setsup_{\lambda\in\Lambda}\mu_\lambda(\mss)
		\]
		for $\mss\in\alg$, is an $\pososext$-valued measure, and $\npm_\lambda\uparrow\npm$ in $\posextmeas$. If $\npm(\pset)<\infty$, then $\net{\npm}\subseteq\posmeas$ and $\npm_\lambda\uparrow\npm$ in $\posmeas$.
		
		When $\os$ is \sDc, the analogous statements hold for increasing sequences in $\posextmeas$.
	\end{proposition}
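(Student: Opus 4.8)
The plan is to verify directly that the pointwise supremum $\npm$ satisfies the two defining properties of an $\pososext$-valued measure, then to identify it as the supremum of the net in the cone, and finally to descend to the finite case and to the sequential $\sigma$-\Dc\ case.

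First I would record that $\npm$ is well defined: for each $\mss\in\alg$ the supremum $\setsup_\lambda\npm_\lambda(\mss)$ exists in $\osext$ by part~\partref{part:completeness_properties_of_extended_space_3} of \cref{res:completeness_properties_of_extended_space}, and since it dominates $\npm_{\lambda_0}(\mss)\in\pososext$ for any fixed $\lambda_0$, it in fact lies in $\pososext$. The equality $\npm(\emptyset)=0$ is immediate from $\npm_\lambda(\emptyset)=0$.

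The heart of the matter is the $\sigma$-additivity of $\npm$. Given a pairwise disjoint sequence $\seq{\mss}$ with union $\mss$, the $\sigma$-additivity of each $\npm_\lambda$ gives $\npm_\lambda(\mss)=\setsup_{N\geq 1}\sum_{n=1}^N\npm_\lambda(\mss_n)$. The crucial point is the identity
\[
\setsup_\lambda\sum_{n=1}^N\npm_\lambda(\mss_n)=\sum_{n=1}^N\npm(\mss_n)
\]
for each fixed $N$. Here I would use that the net is increasing: for each $n$ one has $\npm_\lambda(\mss_n)\uparrow\npm(\mss_n)$, whence an induction on $N$ via part~\partref{part:binary_operations_in_extended_space_2} of \cref{res:binary_operations_in_extended_space} yields $\sum_{n=1}^N\npm_\lambda(\mss_n)\uparrow\sum_{n=1}^N\npm(\mss_n)$, which is precisely the displayed identity. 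Granting this, both $\npm(\mss)=\setsup_\lambda\npm_\lambda(\mss)$ and $\setsup_{N\geq 1}\sum_{n=1}^N\npm(\mss_n)$ equal the joint supremum of the doubly-indexed family $\set{\sum_{n=1}^N\npm_\lambda(\mss_n):\lambda,\,N\geq 1}$ over the product index set, the two expressions arising simply by taking the iterated supremum in the two possible orders, and hence they coincide. This is the required $\sigma$-additivity.

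Finally, $\npm_\lambda\leq\npm$ for every $\lambda$ is immediate from the definition of the ordering on measures, and if $\npn\in\posextmeas$ satisfies $\npm_\lambda\leq\npn$ for all $\lambda$, then $\npm(\mss)=\setsup_\lambda\npm_\lambda(\mss)\leq\npn(\mss)$ for every $\mss\in\alg$, so $\npm\leq\npn$; thus $\npm_\lambda\uparrow\npm$ in $\posextmeas$. If $\npm(\pset)<\infty$, then $\npm_\lambda(\pset)\leq\npm(\pset)<\infty$ forces every $\npm_\lambda(\pset)$ to be finite, so the net and its supremum all lie in $\posmeas$; since a supremum computed in the larger cone that happens to lie in the subcone is automatically the supremum in the subcone, we obtain $\npm_\lambda\uparrow\npm$ in $\posmeas$ as well. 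The sequential statement for $\sigma$-\Dc\ $\os$ follows by the identical argument, invoking part~\partref{part:completeness_properties_of_extended_space_1} of \cref{res:completeness_properties_of_extended_space} in place of part~\partref{part:completeness_properties_of_extended_space_3}; every supremum occurring is then over a countable set, the joint index set in the $\sigma$-additivity step being $\NN\times\NN$, so $\sigma$-\Dc ness suffices. I expect the only genuine subtlety to be this $\sigma$-additivity step: the interchange $\setsup_\lambda\sum_{n=1}^N=\sum_{n=1}^N\setsup_\lambda$ for a fixed finite sum is false for arbitrary families and rests on the directedness of the net through \cref{res:binary_operations_in_extended_space}, whereas the subsequent swap of the $\lambda$- and $N$-suprema is merely the observation that an iterated supremum equals the supremum over the product.
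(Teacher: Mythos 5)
Your proof is correct and takes essentially the same route as the paper: the pointwise suprema exist by \cref{res:completeness_properties_of_extended_space}, the measure axioms are checked directly, and the one delicate point---$\sigma$-additivity---rests, exactly as in the paper, on the directedness of the net through part~\partref{part:binary_operations_in_extended_space_2} of \cref{res:binary_operations_in_extended_space}, which justifies $\setsup_\lambda\sum_{n=1}^N\npm_\lambda(\mss_n)=\sum_{n=1}^N\npm(\mss_n)$. The only difference is organisational: where you identify $\npm(\mss)$ and $\setsup_{N\geq 1}\sum_{n=1}^N\npm(\mss_n)$ as the two iterated forms of the joint supremum over $\Lambda\times\NN$, the paper proves the two inequalities separately (using the monotonicity and finite additivity of $\npm$ for the lower bound), and both treatments handle the net and sequential cases simultaneously, with your observation that the joint index set is countable correctly covering the \sDc\ case.
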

	
	\begin{proof}
		We prove both cases simultaneously. Let  $\net{\npm}$ be an increasing net (possibly an increasing sequence) in $\posextmeas$, and define $\npm:\alg\to\pososext$ by setting
		\[
		\npm(\mss)=\setsup\set{\mu_\lambda(\mss) : \lambda\in\Lambda}
		\]
		for $\mss\in\alg$. This supremum exists in $\osext$, as a consequence of either part~\partref{part:completeness_properties_of_extended_space_1} or part~\partref{part:completeness_properties_of_extended_space_3} of \cref{res:completeness_properties_of_extended_space}.
		
		Clearly, $\npm(\emptyset)=0$ and $\npm_\lambda(\mss)\leq\npm(\mss)$ for all $\lambda\in\Lambda$ and $\mss\in\alg$. Since the $\npm_\lambda$ are monotone, so is $\npm$. Furthermore, part~\partref{part:binary_operations_in_extended_space_2} of \cref{res:binary_operations_in_extended_space} implies that the finite additivity of the $\npm_\lambda$ is inherited by $\npm$.
		
		Let $\seq{\mss}$ be a pairwise disjoint sequence in $\alg$, and set $\mss\coloneqq\bigcup_{n=1}^\infty\mss_n$. Then, for all $\lambda\in\Lambda$,
		\[
		\npm_\lambda(\mss)=\setsup_{N\geq 1} \sum_{n=1}^N\npm_\lambda(\mss_n)\leq \setsup_{N\geq 1}\sum_{n=1}^N\npm(\mss_n).
		\]
		Hence $\npm(\mss)\leq\setsup_{N\geq 1}\sum_{n=1}^N\npm(\mss_n)$.
		On the other hand, $\npm$ is monotone and finitely additive, so, for all $N\geq 1$, we have $\npm(\mss)\geq\npm\left(\bigcup_{n=1}^N\mss_n\right)=\sum_{n=1}^N\npm(\mss_n)$. Hence also $\npm(\mss)\geq\setsup_{N\geq 1} \sum_{n=1}^N\npm(\mss_n)$. We conclude that $\npm$ is $\sigma$-additive.
		Now that we know that $\npm\in\posextmeas$, it is evident that $\npm$ is the supremum of the $\mu_\lambda$ in $\posextmeas$. The final statements are clear.
	\end{proof}
	
	\begin{corollary}\label{res:arbitrary_sets_of_measures_have_a_supremum}
		Let $\ms$ be a measurable space, and let $\os$ be a \Dc\ vector lattice. Suppose that $\set{\npm_\lambda : \lambda\in\Lambda}$ is an arbitrary non-empty subset of $\posextmeas$. Then $\setsup_{\lambda\in\Lambda} \npm_\lambda$ exists in $\posextmeas$.
	\end{corollary}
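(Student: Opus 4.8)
The plan is to realise the supremum of the family as the supremum of the upward directed net of all its \emph{finite} suprema, and then to invoke \cref{res:increasing_nets_of_measures_have_a_supremum}. Concretely, I would let $\mathcal F$ denote the collection of all non-empty finite subsets of $\Lambda$, directed by inclusion. For $F=\{\lambda_1,\dots,\lambda_n\}\in\mathcal F$, iterating part~\partref{part:lattice_properties_of_measures_1} of \cref{res:lattice_cone_of_positive_measures} shows that the finite supremum $\npm_F\coloneqq\npm_{\lambda_1}\vee\dotsb\vee\npm_{\lambda_n}$ exists in $\posextmeas$, and this element does not depend on the chosen enumeration of $F$.

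The two facts that make the net-construction work are that $\mathcal F$ is upward directed, since the union of two finite subsets is again finite, and that $F\mapsto\npm_F$ is monotone: if $F_1\subseteq F_2$, then $\npm_{F_2}$ is an upper bound for $\{\npm_\lambda:\lambda\in F_1\}$, whence $\npm_{F_1}\leq\npm_{F_2}$ because $\npm_{F_1}$ is the least such upper bound. Thus $\{\npm_F\}_{F\in\mathcal F}$ is an increasing net in $\posextmeas$, and \cref{res:increasing_nets_of_measures_have_a_supremum} supplies a measure $\npm\in\posextmeas$ with $\npm_F\uparrow\npm$.

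The remaining step is to verify that $\npm=\setsup_{\lambda\in\Lambda}\npm_\lambda$ in $\posextmeas$. That $\npm$ is an upper bound is immediate, since $\npm_\lambda=\npm_{\{\lambda\}}\leq\npm$ for each $\lambda$. For minimality, I would take any $\sigma\in\posextmeas$ with $\sigma\geq\npm_\lambda$ for all $\lambda$; then for each $F\in\mathcal F$ the measure $\sigma$ dominates every element of $\{\npm_\lambda:\lambda\in F\}$, hence dominates their least upper bound $\npm_F$, and taking the supremum over $F$ yields $\sigma\geq\setsup_{F\in\mathcal F}\npm_F=\npm$.

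I do not expect a genuine obstacle here: the only lattice operation used is the finite supremum, which exists unconditionally by part~\partref{part:lattice_properties_of_measures_1} of \cref{res:lattice_cone_of_positive_measures} regardless of whether the measures are finite or infinite, so the pathology of \cref{rem:counter_example}---which concerns infima of infinite measures---never enters. The one point requiring a moment's care is the minimality argument, namely that an upper bound for every individual $\npm_\lambda$ is automatically an upper bound for every finite supremum $\npm_F$; this is immediate from the characterisation of $\npm_F$ as a least upper bound in the cone $\posextmeas$, but it is the place where the correct interpretation of the order on $\posextmeas$ must be used.
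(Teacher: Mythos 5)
Your proof is correct and is essentially identical to the paper's own: both pass to the upward directed family of finite suprema, which exist by part~\partref{part:lattice_properties_of_measures_1} of \cref{res:lattice_cone_of_positive_measures}, and then invoke \cref{res:increasing_nets_of_measures_have_a_supremum}. Your explicit minimality check simply spells out what the paper summarises as the two sets ``evidently having the same upper bounds'' in $\posextmeas$.
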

	
	\begin{proof}
		Let $\Phi$ denote the collection of finite subsets of $\Lambda$. For $\phi\in\Phi$, set $\mu_{\phi}\coloneqq\setsup_{\lambda\in\phi}\mu_{\lambda}$; this supremum exists by \cref{res:lattice_cone_of_positive_measures}. The sets $\set{\npm_\phi : \phi\in\Phi}$ and $\set{\npm_\lambda : \lambda\in\Lambda}$ evidently have the same upper bounds in $\posextmeas$. Since $\set{\npm_\phi : \phi\in\Phi}$ is upward directed, \cref{res:increasing_nets_of_measures_have_a_supremum} shows that it has a least upper bound in $\posextmeas$; this is then also the least upper bound of $\set{\npm_\lambda : \lambda\in\Lambda}$.
	\end{proof}
	
	\subsection{Cones of $\pososext$-valued regular Borel measures}\label{sec:cone_of_regular_borel_measures}
	
	Let $\ts$ be a locally compact Hausdorff space, and let $\vl$ be a \Dc\ vector lattice. The results in \cref{subsec:structure_of_the set_of_positive_measures} apply to $\posextmeasts$ and $\posmeasts$.  We shall now show in \cref{res:borel_measures_form_a_lattice_cone,res:regular_measures_form_a_lattice_cone} that various regularity properties are preserved under the cone and lattice operations. The supremum and infimum of two measures in these results
	are those in $\posextmeasts$. Explicit formulas for these are in \cref{eq:sup_formula,eq:inf_formula} in  \cref{res:lattice_cone_of_positive_measures}; these are, however, not needed for the proofs.
	
	Since $\npm\vee\npn\leq\npm+\npn$ and, for finite $\npm$ and $\npn$, also $\npm\wedge\npn\leq\npm$, the following lemma is clear.
	
	\begin{lemma}
		\label{res:borel_measures_form_a_lattice_cone}
		Let $X$ be a locally compact Hausdorff space, and let $\os$ be a \Dc\ vector lattice.
		
		If $\npm,\npn\in\posextmeasts$ are $\pososext$-valued Borel measures and if $r\in\posR$, then $\npm+\npn$, $r\npm$, and $\npm\vee\npn$ are $\pososext$-valued  Borel measures.
		
		If $\npm,\npn\in\posmeasts$ are $\posos$-valued Borel measures, then $\npm\wedge\npn$ is an $\posos$-valued Borel measure.
	\end{lemma}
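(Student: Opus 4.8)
The plan is to verify that each of the four set maps $\npm+\npn$, $r\npm$, $\npm\vee\npn$, and $\npm\wedge\npn$ takes finite values on compact sets, i.e., is a $\pososext$-valued Borel measure, by reducing the finiteness to the already-known finiteness of $\npm$ and $\npn$ on compact sets. The sentence preceding the lemma already supplies the two comparison inequalities $\npm\vee\npn\leq\npm+\npn$ (valid in $\posextmeasts$) and, when $\npm$ and $\npn$ are finite, $\npm\wedge\npn\leq\npm$, so the whole argument is really just monotonicity plus these bounds.

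First I would dispose of the two cone operations. Fix a compact $K\subseteq\ts$. By hypothesis $\npm(K),\npn(K)\in\os$, so $(\npm+\npn)(K)=\npm(K)+\npn(K)\in\os$ since $\os$ is a vector space; this shows $\npm+\npn$ is a $\pososext$-valued Borel measure. Likewise, for $r\in\posR$ we have $(r\npm)(K)=r\npm(K)\in\os$, because the action of $\posR$ on $\posos$ (as extended in \cref{sec:preliminaries}) sends a finite element to a finite element when the scalar is finite; hence $r\npm$ is a $\pososext$-valued Borel measure. Both conclusions use only \cref{def: regularity of measures}\partref{1} together with the definitions of $\npm+\npn$ and $r\npm$ from \cref{subsec:structure_of_the set_of_positive_measures}.

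Next I would handle the supremum. Again fix a compact $K$. Using $\npm\vee\npn\leq\npm+\npn$ in $\posextmeasts$ and evaluating at $K$ gives $(\npm\vee\npn)(K)\leq(\npm+\npn)(K)=\npm(K)+\npn(K)$ in $\osext$. The right-hand side is a finite element, so $(\npm\vee\npn)(K)$ is bounded above by a finite element; since $0\leq(\npm\vee\npn)(K)$ and the interval is inside the finite part, \cref{res:operations_in_extended_space}\partref{part:operations_in_extended_space_2} (a supremum bounded by a finite element is finite) forces $(\npm\vee\npn)(K)\in\os$. Thus $\npm\vee\npn$ is a $\pososext$-valued Borel measure. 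For the infimum of two \emph{finite} measures $\npm,\npn\in\posmeasts$, the bound $\npm\wedge\npn\leq\npm$ gives $(\npm\wedge\npn)(K)\leq\npm(K)\in\os$; moreover \cref{res:lattice_cone_of_positive_measures}\partref{part:lattice_properties_of_measures_2} already tells us $\npm\wedge\npn$ is finite, so $(\npm\wedge\npn)(\ts)\in\os$, whence in particular $(\npm\wedge\npn)(K)\in\os$ for every compact $K$, and $\npm\wedge\npn$ is an $\posos$-valued Borel measure.

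There is essentially no obstacle here: every step is either a direct application of the definitions or an invocation of a comparison inequality already recorded before the statement, and the only subtlety — ensuring that an inequality in $\osext$ against a finite element yields a finite value — is exactly what \cref{res:operations_in_extended_space}\partref{part:operations_in_extended_space_2} provides. This is why the authors can (and do) call the lemma ``clear'' once the two displayed inequalities are in hand.
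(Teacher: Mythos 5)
Your proposal is correct and follows exactly the route the paper intends: the paper's entire ``proof'' is the sentence preceding the lemma, which notes $\npm\vee\npn\leq\npm+\npn$ and, for finite measures, $\npm\wedge\npn\leq\npm$, and declares the lemma clear --- precisely the comparison-plus-finiteness-on-compacts argument you spell out. One cosmetic quibble: the fact that an element of $\osext$ dominated by a finite element is itself finite follows immediately from the definition of the extended ordering (only $\largest$ dominates $\largest$), so the appeal to part~\partref{part:operations_in_extended_space_2} of \cref{res:operations_in_extended_space}, which concerns suprema of sets, is unnecessary, though harmless.
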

	
	In the forthcoming proof of \cref{res:regular_measures_form_a_lattice_cone}, we again have to circumvent the use of $\varepsilon$-arguments as in the literature (see \cite[Theorem~38.5]{aliprantis_burkinshaw_PRINCIPLES_OF_REAL_ANALYSIS_THIRD_EDITION:1998}, for example). For this, the following result is convenient.
	
	\begin{lemma}\label{res:regularity_transferred_abstract}
		Let $S$ be a non-empty set, and let $\os$ be a \Dc\ vector lattice. Take $s\in S$ and a non-empty subset $S^\prime$ of $S$.
		\begin{enumerate}
			\item\label{part:regularity_transferred_abstract_1} Suppose that $\npm,\npn\colon S^\prime \cup \set{s}\to\os$ are such that
			\begin{enumerate}
				\item $\npn(s^\prime)\leq \npn (s)$ and $(\npm-\npn)(s^\prime)\leq(\npm-\npn)(s)$ for $s^\prime\in S^\prime$;
				\item $\npm(s)=\setsup_{s^\prime \in S^\prime} \npm(s^\prime)$.
			\end{enumerate}
			Then $\npn(s)=\setsup_{s^\prime \in S^\prime} \npn(s^\prime)$.
			\item\label{part:regularity_transferred_abstract_2}
			Suppose that $\npm,\npn\colon S^\prime \cup \set{s}\to\os$ are such that
			\begin{enumerate}
				\item $\npn(s^\prime)\geq \npn (s)$ and $(\npm-\npn)(s^\prime)\geq(\npm-\npn)(s)$ for $s^\prime\in S^\prime$;
				\item  $\npm(s)=\inf_{s^\prime \in S^\prime} \npm(s^\prime)$.
			\end{enumerate}
			Then  $\npn(s)=\setinf_{s^\prime \in S^\prime} \npn(s^\prime)$.
		\end{enumerate}
	\end{lemma}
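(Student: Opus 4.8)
The plan is to prove part~\partref{part:regularity_transferred_abstract_1} directly, by verifying that $\npn(s)$ is the \emph{least} upper bound of $\set{\npn(s^\prime):s^\prime\in S^\prime}$, and then to deduce part~\partref{part:regularity_transferred_abstract_2} from it by passing to $-\npm$ and $-\npn$. The idea throughout is to replace the customary $\varepsilon$-reasoning for $\os=\RR$ by an argument resting solely on the order structure.

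For part~\partref{part:regularity_transferred_abstract_1}, the first inequality in hypothesis~(a) says exactly that $\npn(s)$ is an upper bound for $\set{\npn(s^\prime):s^\prime\in S^\prime}$, so only the minimality remains to be shown. I would take an arbitrary upper bound $w\in\os$, so that $\npn(s^\prime)\leq w$ for all $s^\prime\in S^\prime$, and prove that $\npn(s)\leq w$. Rewriting the second inequality in hypothesis~(a) as $\npn(s)-\npn(s^\prime)\leq\npm(s)-\npm(s^\prime)$ and combining it with $\npn(s^\prime)\leq w$, one obtains
\[
\npn(s)-w\leq\npn(s)-\npn(s^\prime)\leq\npm(s)-\npm(s^\prime)\qquad(s^\prime\in S^\prime),
\]
so that $\npn(s)-w$ is a lower bound for the set $\set{\npm(s)-\npm(s^\prime):s^\prime\in S^\prime}$.

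The crux is that this last set has infimum $0$: since translating and negating turns a supremum into an infimum, hypothesis~(b) yields
\[
\setinf_{s^\prime\in S^\prime}\bigl(\npm(s)-\npm(s^\prime)\bigr)=\npm(s)-\setsup_{s^\prime\in S^\prime}\npm(s^\prime)=0.
\]
As $0$ is the greatest lower bound of this set, the lower bound $\npn(s)-w$ satisfies $\npn(s)-w\leq 0$, i.e.\ $\npn(s)\leq w$. Hence $\npn(s)=\setsup_{s^\prime\in S^\prime}\npn(s^\prime)$, as desired.

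For part~\partref{part:regularity_transferred_abstract_2}, I would apply part~\partref{part:regularity_transferred_abstract_1} to the functions $-\npm$ and $-\npn$. Under this substitution the reversed inequalities in hypothesis~(a) of part~\partref{part:regularity_transferred_abstract_2} become precisely the inequalities required in hypothesis~(a) of part~\partref{part:regularity_transferred_abstract_1}, while hypothesis~(b) of part~\partref{part:regularity_transferred_abstract_2} turns into $\setsup_{s^\prime}(-\npm(s^\prime))=-\npm(s)$. Part~\partref{part:regularity_transferred_abstract_1} then gives $-\npn(s)=\setsup_{s^\prime}(-\npn(s^\prime))=-\setinf_{s^\prime}\npn(s^\prime)$, which is exactly $\npn(s)=\setinf_{s^\prime\in S^\prime}\npn(s^\prime)$. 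The single step that carries the whole proof is the identity converting the supremum hypothesis on $\npm$ into the infimum $0$; once that observation is in place, everything reduces to the elementary fact that any lower bound of a set lies below its infimum, and I do not expect any genuine obstacle beyond locating this reformulation.
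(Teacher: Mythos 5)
Your proposal is correct and takes essentially the same approach as the paper: both arguments are purely order-theoretic uses of the translation-invariance of suprema applied to hypotheses (a) and (b), and your reduction of part~(2) to part~(1) via $-\npm$ and $-\npn$ is exactly the paper's. The only difference is cosmetic: the paper proves $\npn(s)\leq\setsup_{s^\prime\in S^\prime}\npn(s^\prime)$ by the direct chain $\npn(s)=\npm(s)-(\npm-\npn)(s)=\setsup_{s^\prime\in S^\prime}\bigl(\npm(s^\prime)-(\npm-\npn)(s)\bigr)\leq\setsup_{s^\prime\in S^\prime}\npn(s^\prime)$, whereas you dualise the same computation by testing $\npn(s)$ against an arbitrary upper bound $w$ through the identity $\setinf_{s^\prime\in S^\prime}\bigl(\npm(s)-\npm(s^\prime)\bigr)=0$.
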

	
	\begin{proof}
		We prove part~\partref{part:regularity_transferred_abstract_1}. As $\npn(s^\prime)\leq\npn(s)$ for $s^\prime\in S^\prime$, it is clear that $\setsup_{s^\prime \in S^\prime} \npn(s^\prime)$ exists and that $\setsup_{s^\prime \in S^\prime} \npn(s^\prime)\leq\npn(s)$.
		For the reverse inequality, we argue as follows:
		\begin{align*}
			\npn(s)&=\npm(s)-(\npm-\npn)(s)\\
			&=\setsup_{s^\prime\in S^\prime}\,\big(\npm(s^\prime)-(\npm-\npn)(s)\big)\\
			&\leq \setsup_{s^\prime\in S^\prime}\big(\npm(s^\prime)-(\npm-\npn)(s^\prime)\big)\\
			&=\setsup_{s^\prime\in S^\prime}\big(\npn(s^\prime)\big).
		\end{align*}
		Part~\partref{part:regularity_transferred_abstract_2} follows from an application of part~\partref{part:regularity_transferred_abstract_1} to $-\npm$ and $-\npn$.
	\end{proof}
	
	\begin{corollary}\label{res:regularity_transferred_concrete}
		Let $\ts$ be a locally compact Hausdorff space, and suppose that $\npn,\npm\in\posextmeasts$ are such that $\npn\leq\npm$. Take $\mss\in\borel$, and suppose that $\npm(\mss)$ is finite. If $\npm$ is inner \uppars{resp.\ outer} regular at $\mss$, then $\npn$ is inner  \uppars{resp.\ outer} regular at $\mss$.
	\end{corollary}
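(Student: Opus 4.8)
The plan is to deduce both statements directly from \cref{res:regularity_transferred_abstract}, which is tailored precisely for this kind of transfer. Consider first inner regularity. I would take $s\coloneqq\mss$ and let $S^\prime$ be the collection of compact subsets $K$ of $\mss$; since $\emptyset$ is such a set, $S^\prime$ is non-empty. Because $\npm(\mss)$ is finite and measures are monotone, every $\npm(K)$ and every $\npm(\mss\setminus K)$ is dominated by $\npm(\mss)$ and hence finite; as $\npn\leq\npm$, the same holds for $\npn$. Thus the restrictions of $\npm$ and $\npn$ to $S^\prime\cup\set{s}$ take values in $\os$, so that \cref{res:regularity_transferred_abstract} is applicable.

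It then remains to verify the two hypotheses of part~\partref{part:regularity_transferred_abstract_1}. Monotonicity of $\npn$ gives $\npn(K)\leq\npn(\mss)$. For the second condition, finite additivity together with the finiteness just noted yields $(\npm-\npn)(\mss)-(\npm-\npn)(K)=\npm(\mss\setminus K)-\npn(\mss\setminus K)$, which is positive because $\npn\leq\npm$; hence $(\npm-\npn)(K)\leq(\npm-\npn)(\mss)$. The remaining hypothesis $\npm(\mss)=\setsup_{K}\npm(K)$ is exactly the inner regularity of $\npm$ at $\mss$, read in $\os$ via part~\partref{part:operations_in_extended_space_2} of \cref{res:operations_in_extended_space} (the supremum is finite). \cref{res:regularity_transferred_abstract} then gives $\npn(\mss)=\setsup_{K}\npn(K)$ in $\os$, and part~\partref{part:operations_in_extended_space_1} of \cref{res:operations_in_extended_space} promotes this to a supremum in $\osext$, that is, to the inner regularity of $\npn$ at $\mss$.

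For outer regularity, the same scheme applies with $s\coloneqq\mss$, but here the complication that I expect to be the main obstacle arises: an open set $V\supseteq\mss$ may have $\npm(V)=\largest$, so I cannot feed the full family of open supersets into \cref{res:regularity_transferred_abstract}, whose hypotheses require $\os$-valued data. I would therefore let $S^\prime$ be the family of open $V\supseteq\mss$ with $\npm(V)\in\os$. Since $\npm(\mss)=\setinf\set{\npm(V):V\text{ open},\ \mss\subseteq V}$ is finite, part~\partref{part:operations_in_extended_space_3} of \cref{res:operations_in_extended_space} shows that this family is non-empty and that the infimum over it, taken in $\os$, still equals $\npm(\mss)$. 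Exactly as before, now using $\npm(V\setminus\mss)\geq\npn(V\setminus\mss)$, the hypotheses of part~\partref{part:regularity_transferred_abstract_2} hold, so that $\npn(\mss)=\setinf_{V\in S^\prime}\npn(V)$ in $\os$.

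The final step is to pass from this restricted infimum back to the infimum over \emph{all} open supersets of $\mss$. By monotonicity, $\npn(\mss)\leq\npn(V)$ for every open $V\supseteq\mss$, so $\npn(\mss)$ is a lower bound for the full family; and any lower bound of the full family is in particular a lower bound of the subfamily $S^\prime$, hence is $\leq\npn(\mss)$. Therefore $\npn(\mss)=\setinf\set{\npn(V):V\text{ open},\ \mss\subseteq V}$ in $\osext$, which is the outer regularity of $\npn$ at $\mss$.
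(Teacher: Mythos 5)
Your proof is correct and follows essentially the same route as the paper's: both apply \cref{res:regularity_transferred_abstract} with $s=\mss$, taking $S^\prime$ to be the compact subsets of $\mss$ for inner regularity and the open supersets of $\mss$ of finite $\npm$-measure for outer regularity. Your write-up even makes explicit two points the paper's proof leaves implicit, namely the non-emptiness of the restricted family of open sets and the passage from the infimum over that family back to the infimum over all open supersets of $\mss$.
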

	
	\begin{proof}
		When $\npm$ is inner regular at $\mss$, we apply \cref{res:regularity_transferred_abstract} with $S=\borel$, where we take $S^\prime$ to be the compact subsets of $\mss$ and $\set{\mss}$ for $s$. Then part~\partref{part:regularity_transferred_abstract_1} of \cref{res:regularity_transferred_abstract} shows that $\npm$ is inner regular at $\npn$ because $\npm$ is. When $\npm$ is outer regular at $\mss$, we take $S^\prime$ to be the open subsets of $\ts$ that contain $\mss$ and have finite measure, and $s=\set{\mss}$.  Then part~\partref{part:regularity_transferred_abstract_2} of \cref{res:regularity_transferred_abstract} shows that $\npm$ is outer regular at $\npn$ because $\npm$ is.
	\end{proof}

	\begin{proposition}\label{res:regular_measures_form_a_lattice_cone}
		Let $X$ be a locally compact Hausdorff space, and let $\os$ be a \Dc\ vector lattice. Take $\mss\in\borel$.
		
		If $\npm,\npn\in\posextmeasts$ are inner \uppars{resp.\ outer} regular  at $\mss$ and if $r\in\posR$, then $\npm+\npn$, $r\npm$, and $\npm\vee\npn$ are inner \uppars{resp.\ outer} regular at $\mss$.
		
		If $\npm,\npn\in\posmeasts$ are inner \uppars{resp.\ outer} regular at $\mss$, then $\npm\wedge\npn$ is inner \uppars{resp.\ outer}  regular at $\mss$.
	\end{proposition}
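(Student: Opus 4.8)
The plan is to establish the four operations $r\npm$, $\npm+\npn$, $\npm\wedge\npn$, and $\npm\vee\npn$ in turn, reducing the two lattice operations to the cone operations and, crucially, to the transfer result \cref{res:regularity_transferred_concrete} whenever a finiteness hypothesis is available. Scalar multiplication is immediate: for $r=0$ the measure $r\npm$ is the zero measure, which is regular at every $\mss$; for $r>0$, part~\partref{part:operations_in_extended_space_7} of \cref{res:operations_in_extended_space} shows that multiplication by $r$ commutes with suprema and infima in $\osext$, so that $(r\npm)(\mss)=r\npm(\mss)$ equals $\setsup\set{(r\npm)(K):K\subseteq\mss\text{ compact}}$ in the inner case and $\setinf\set{(r\npm)(V):V\supseteq\mss\text{ open}}$ in the outer case.

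For the sum I would argue with directed nets. The compact subsets of $\mss$, ordered by inclusion, form an upward directed family, since the union of two compact subsets of $\mss$ is again one; as $\npm$ is monotone, inner regularity of $\npm$ at $\mss$ is exactly the statement that $\npm(K)\uparrow\npm(\mss)$ along this family, and likewise $\npn(K)\uparrow\npn(\mss)$. Part~\partref{part:binary_operations_in_extended_space_2} of \cref{res:binary_operations_in_extended_space} then yields $(\npm+\npn)(K)=\npm(K)+\npn(K)\uparrow\npm(\mss)+\npn(\mss)=(\npm+\npn)(\mss)$, which is inner regularity of $\npm+\npn$ at $\mss$. The outer case is the mirror image, using that the open supersets of $\mss$ form a downward directed family, that $\npm(V)\downarrow\npm(\mss)$ and $\npn(V)\downarrow\npn(\mss)$ by outer regularity, and the decreasing version of the same lemma.

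For the infimum, where $\npm$ and $\npn$ are finite, nothing further is needed: since $\npm\wedge\npn\leq\npm$, the value $\npm(\mss)$ is finite, and $\npm$ is inner (resp.\ outer) regular at $\mss$, \cref{res:regularity_transferred_concrete} applies directly. The supremum $\npm\vee\npn$ is the only delicate case, precisely because $\npm$ and $\npn$ may be infinite, so that the natural dominating measure $\npm+\npn$ — regular by the sum step — need not be finite and the transfer result cannot be invoked unconditionally. I would split into cases. If $\npm(\mss)$ and $\npn(\mss)$ are both finite, then $(\npm+\npn)(\mss)$ is finite; since $\npm\vee\npn\leq\npm+\npn$ and $\npm+\npn$ is inner (resp.\ outer) regular at $\mss$, \cref{res:regularity_transferred_concrete} gives the conclusion. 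If instead, say, $\npm(\mss)=\infty$, then $(\npm\vee\npn)(\mss)=\infty$ as well; outer regularity is then automatic, as every open $V\supseteq\mss$ satisfies $(\npm\vee\npn)(V)\geq(\npm\vee\npn)(\mss)=\infty$ and hence the infimum is $\infty$, while for inner regularity I would use $(\npm\vee\npn)(K)\geq\npm(K)$ for compact $K\subseteq\mss$ to obtain $\setsup_K(\npm\vee\npn)(K)\geq\setsup_K\npm(K)=\npm(\mss)=\infty$ from inner regularity of $\npm$. This final case is the main obstacle: one cannot lean on the finite transfer machinery and must instead exploit the purely order-theoretic fact that any value dominating $\infty$ is itself $\infty$.
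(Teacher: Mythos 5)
Your proof is correct and takes essentially the same route as the paper's: $r\npm$ via part~\partref{part:operations_in_extended_space_7} of \cref{res:operations_in_extended_space}; $\npm+\npn$ from the directedness of the compact subsets of $\mss$ (resp.\ the open supersets) together with \cref{res:binary_operations_in_extended_space}, where the paper uses the set-sum version in part~\partref{part:binary_operations_in_extended_space_1} with the union/intersection trick while you use the equivalent net version in part~\partref{part:binary_operations_in_extended_space_2}; $\npm\wedge\npn$ by transfer from the finite dominating measure $\npm$ via \cref{res:regularity_transferred_concrete}; and $\npm\vee\npn$ by the same case split as the paper (your condition that $\npm(\mss)$ and $\npn(\mss)$ are both finite is equivalent to the paper's condition $(\npm\vee\npn)(\mss)<\largest$), transferring from $\npm+\npn$ in the finite case and arguing directly with $\largest$ otherwise. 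No gaps.
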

	
	\begin{proof}
		We start with $\npm+\npn$.
		
		Suppose that $\npm$ and $\npn$ are inner regular at $\mss$. Then we have, using part~\partref{part:binary_operations_in_extended_space_1} of \cref{res:binary_operations_in_extended_space} in the second step,
		\begin{align*}
			(\npm+&\npn)(\mss)\!
			=\!\setsup\!\set{\npm(K_1)\!:\! K_1\! \text{ is compact and }\! K_1\subseteq \mss}\\
			&\quad\quad\quad\quad+\setsup\set{\npn(K_2)\!:\! K_2 \!\text{ is compact and }\! K_2\subseteq \mss}\\
			&\!=\!\setsup\!\set{\npm(K_1)+\npn(K_2)\!:\! K_i \text{ is compact and }\! K_i\subseteq \mss\!\text{ for }\!\ i=1,2}\\
			&\!\leq\!\setsup\!\set{\npm(K_1\cup K_2)+\!\npn(K_1\cup K_2)\!:\! K_i\! \text{ is compact and }\! K_i\!\subseteq\!\mss\!\text{ for }\!i=1,2}\\
			&\!=\setsup\!\set{\npm(K)+\npn(K)\!:\! K \!\text{ is compact and }\! K\subseteq \mss}\\
			&\!=\setsup\!\set{(\npm+\npn)(K)\!:\! K\! \text{ is compact and }\! K\subseteq \mss}\\
			&\!\leq (\npm+\npn)(\mss).
		\end{align*}
		Hence $\npm+\npn$ is inner regular at $\mss$.
		
		Suppose that $\npm$ and $\npn$ are outer regular at $\mss$. Then we have, again using part~\partref{part:binary_operations_in_extended_space_1} of \cref{res:binary_operations_in_extended_space} in the second step,
		\begin{align*}
			&(\npm+\npn)(\mss)\\
			&=\setinf\set{\!\npm(V_1)\!: V_1 \text{ is open and }\mss\subseteq V_1\!}+\setinf\set{\!\npn(V_2)\!: V_2 \text{ is open and } \mss\subseteq V_2\!}\\
			&=\setinf\set{\npm(V_1)+\npn(V_2): V_i \text{ is open and } \mss\subseteq V_i\text{ for }i=1,2}\\
			&\geq\setinf\set{\npm(V_1\cap V_2)+\npn(V_1\cap V_2): V_i \text{ is open and } \mss\subseteq V_i\text{ for }i=1,2}\\
			&=\setinf\set{\npm(V)+\npn(V): V \text{ is open and } \mss\subseteq V}\\
			&=\setinf\set{(\npm+\npn)(V): V \text{ is open and } \mss\subseteq V}\\
			&\geq (\npm+\npn)(\mss).
		\end{align*}
		Hence $\npm+\npn$ is outer regular at $\mss$.
		
		The statement for $r\npm$ is immediate from part~\partref{part:operations_in_extended_space_7} of \cref{res:operations_in_extended_space}.
		
		We turn to $\npm\vee\npn$.
		
		Suppose that $\npm$ and $\npn$ are inner regular at $\mss$.
		
		If $(\npm\vee\npn)(\mss)=\largest$, then, since $\npm\vee\npn\leq\npm+\npn$, at least one of $\npm(\mss)$ and $\npn(\mss)$ equals $\largest$. Suppose that $\npm(\mss)=\largest$. Then
		\[
		\setsup\set{\npm(K): K \text{ is compact and } K\subseteq \mss}=\largest.
		\]
		Since $\npm\vee\npn\geq\npm$, we then also have that
		\[
		\setsup\set{(\npm\vee\npn)(K): K \text{ is compact and } K\subseteq \mss }=\largest.
		\]
		Arguing similarly when $\npn(\mss)=\largest$, we see that $\npm\vee\npn$ is inner regular at $\ms$.
		
		If $(\npm\vee\npn)(\mss)<\largest$, then $\npm(\mss), \npn(\mss)<\largest$, so $(\npm+\npn)(\mss)<\largest$. As we already know that $\npm+\npn$ is inner regular at $\mss$, and since $\npm\vee\npn\leq\npm+\npn$, \cref{res:regularity_transferred_concrete} shows that $\npm\vee\npn$ is inner regular at $\mss$.
		
		Suppose that $\npm$ and $\npn$ are outer regular at $\mss$.
		
		If $(\npm\vee\npn)(\mss)=\largest$, then, since $(\npm\vee\npn)(\mss)\leq\npm(\mss)+\npn(\mss)$, at least one of $\npm(\mss)$ and $\npn(\mss)$ equals $\largest$. Suppose that $\npm(\mss)=\largest$.
		Then $\npm(V)=\largest$ for all open subsets $V$ such that $\mss\subseteq V$. Since $(\npm\vee\npn)(V)\geq\npm(V)=\largest$ for such $V$, we see that
		\[
		\setinf\set{(\npm\vee\npn)(V): V \text{ is open and }\mss\subseteq V}=\setinf\,\{\largest\}=\largest=(\npm\vee\npn)(\mss).
		\]
		Arguing similarly when $\npn(\mss)=\largest$, we see that $\npm\vee\npn$ is outer regular at $\mss$.
		
		When $(\npm\vee\npn)(\mss)<\infty$, the facts that $\npm\vee\npn\leq\npm+\npn$ and that $(\npm+\npn)(\mss)<\infty$, the outer regularity of $\npm+\npn$ at $\mss$, and \cref{res:regularity_transferred_concrete} imply that $\npm\vee\npn$ is outer regular at $\mss$.

		For $\npm,\npn\in\posrBmeas$, it is immediate from the fact that $\npm\wedge\npn\leq\npm$, the inner (resp.\ outer) regularity of $\npm$ at $\mss$, and \cref{res:regularity_transferred_concrete} that $\npm\wedge\npn$ is inner (resp.\ outer) regular at $\mss$.
	\end{proof}
	
	The following is clear from the \cref{res:borel_measures_form_a_lattice_cone}, \cref{res:regular_measures_form_a_lattice_cone}, and \cref{res:regularity_transferred_concrete}.
	
	\begin{theorem}\label{res:lattice_properties_of_measures_on_topological_spaces}
		Let $X$ be a locally compact Hausdorff space, and let $\os$ be a \Dc\ vector lattice.
		\begin{enumerate}
			\item\label{part:lattice_properties_of_measures_on_topological_spaces_1} $\posextrBmeas$ is a subcone of $\posextmeasts$. For $\npm,\npn\in\posextrBmeas$, their supremum in $\posextrBmeas$ exists and is equal to their supremum in $\posextmeasts$;
			\item\label{part:lattice_properties_of_measures_on_topological_spaces_2} $\posrBmeas$ is a subcone and sublattice of $\posmeasts$;
			\item\label{part:lattice_properties_of_measures_on_topological_spaces_3} Suppose that $\npn\in\posmeasts$ and $\npm\in\posrBmeas$ are such that $\npn\leq\npm$. Then $\npn\in\posrBmeas$.
		\end{enumerate}
	\end{theorem}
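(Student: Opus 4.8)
The plan is to assemble the statement from the three results cited immediately above it, relying throughout on the elementary principle that a supremum or infimum computed in a larger ordered set which happens to lie in a sub-poset is automatically the supremum or infimum in that sub-poset. The two inclusions to keep in mind are $\posrBmeas\subseteq\posextrBmeas\subseteq\posextmeasts$ and $\posrBmeas\subseteq\posmeasts$, and the working criterion is that a measure in $\posextmeasts$ belongs to $\posextrBmeas$ exactly when it is an $\pososext$-valued Borel measure that is inner regular at every open set and outer regular at every Borel set.

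For part~\partref{part:lattice_properties_of_measures_on_topological_spaces_1}, I would take $\npm,\npn\in\posextrBmeas$ and $r\in\posR$ and observe that \cref{res:borel_measures_form_a_lattice_cone} makes $\npm+\npn$, $r\npm$, and $\npm\vee\npn$ into $\pososext$-valued Borel measures, while \cref{res:regular_measures_form_a_lattice_cone}, invoked once at each open set (for inner regularity) and once at each Borel set (for outer regularity), makes them inherit the requisite regularity; hence all three lie in $\posextrBmeas$, which is therefore a subcone. For the supremum, $\npm\vee\npn$ exists in $\posextmeasts$ by part~\partref{part:lattice_properties_of_measures_1} of \cref{res:lattice_cone_of_positive_measures} and has just been placed in $\posextrBmeas$; since an upper bound of $\set{\npm,\npn}$ in $\posextrBmeas$ is in particular one in $\posextmeasts$ and hence dominates $\npm\vee\npn$, the latter is the supremum in $\posextrBmeas$ as well.

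Part~\partref{part:lattice_properties_of_measures_on_topological_spaces_2} is the finite counterpart. For $\npm,\npn\in\posrBmeas$ and $r\in\posR$, the measures $\npm+\npn$, $r\npm$, and $\npm\vee\npn$ lie in $\posextrBmeas$ by part~\partref{part:lattice_properties_of_measures_on_topological_spaces_1} and are finite (for $\npm\vee\npn$ this uses the finiteness clause in part~\partref{part:lattice_properties_of_measures_1} of \cref{res:lattice_cone_of_positive_measures}), so they lie in $\posrBmeas$. For the infimum, part~\partref{part:lattice_properties_of_measures_2} of \cref{res:lattice_cone_of_positive_measures} provides $\npm\wedge\npn$ in $\posextmeasts$ and guarantees it is finite; \cref{res:borel_measures_form_a_lattice_cone} shows it is a Borel measure, and \cref{res:regular_measures_form_a_lattice_cone}, again applied open-set-by-open-set and Borel-set-by-Borel-set, shows it is regular, whence $\npm\wedge\npn\in\posrBmeas$. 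As before it is the infimum in the sub-poset because it is the infimum in the larger cone and happens to live in $\posrBmeas$. For part~\partref{part:lattice_properties_of_measures_on_topological_spaces_3}, with $\npn\in\posmeasts$, $\npm\in\posrBmeas$, and $\npn\leq\npm$, the measure $\npn$ is already a Borel measure since it is $\posos$-valued by hypothesis, and its regularity is transferred down from $\npm$ by \cref{res:regularity_transferred_concrete}: finiteness of $\npm$ makes $\npm(\mss)$ finite for every $\mss\in\borel$, so the corollary applies at each open set and each Borel set to give inner and outer regularity of $\npn$; being finite, $\npn\in\posrBmeas$.

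The main point demanding care is not a genuine obstacle but the bookkeeping around finiteness. The infimum part of \cref{res:lattice_cone_of_positive_measures} and the transfer result \cref{res:regularity_transferred_concrete} are available only for finite measures --- and, as \cref{rem:counter_example} shows, the infimum formula can fail outright in the infinite case --- so I must invoke $\wedge$ and the domination argument only after finiteness of the relevant measure has been established, which is precisely why $\posextrBmeas$ is merely a supremum-closed subcone whereas $\posrBmeas$ is a genuine sublattice. The only other subtlety is that \cref{res:regular_measures_form_a_lattice_cone} and \cref{res:regularity_transferred_concrete} are stated at a single fixed set $\mss$, so I must quantify them correctly, over all open sets for inner regularity and over all Borel sets for outer regularity.
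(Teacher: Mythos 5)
Your proposal is correct and follows exactly the route the paper takes: the paper derives the theorem directly from \cref{res:borel_measures_form_a_lattice_cone}, \cref{res:regular_measures_form_a_lattice_cone}, and \cref{res:regularity_transferred_concrete}, which are precisely the three results you assemble, together with the standard observation that a supremum or infimum formed in $\posextmeasts$ that lands in the sub-poset is the supremum or infimum there. Your explicit attention to the finiteness hypotheses (needed for the infimum and for \cref{res:regularity_transferred_concrete}) is exactly the bookkeeping the paper's cited results require.
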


	\begin{remark}\label{rem:almost_ideal_real_case}\quad
		Part~\partref{part:lattice_properties_of_measures_on_topological_spaces_3} \cref{res:lattice_properties_of_measures_on_topological_spaces} will be strengthened in \cref{res:regular_borel_measures_form_ideal}.	Even when $\os=\RR$ it appears to be not so widely known. Another, considerably more circumstantial, way to infer the result in this case is as follows. If $0\leq\npn\leq\npm$, then $\npn\ll\npm$, so the Radon-Nikodym theorem furnishes a $\mu$-integrable non-negative function $f$ such that $\npn(\mss)=\int_\mss f\di{\npm}$ for $\mss\in\borel$. By \cite[Exercise~7.2.8]{folland_REAL_ANALYSIS_SECOND_EDITION:1999}, $\npn$ is an $\pos{\RR}$-valued regular Borel measure.
	\end{remark}
	
	\subsection{Vector lattices of $\os$-valued measures}
	
	Let $\os$ be a \sDc\ vector lattice, and let $\ms$ be a measurable space. We define $\smeas$, the \emph{$\os$-valued measures}, to be the linear subspace of the vector space of set maps from $\alg$ into $\os$ that is spanned by $\posmeas$. The remarks preceding \cref{res:lattice_cone_of_positive_measures} imply that the elements of $\smeas$ are of the form $\npn_1-\npn_2$ for $\npn_1,\npn_2\in\posmeas$. If $\seq{\mss}$ is a pairwise disjoint sequence in $\alg$ and $\npm\in\smeas$, then $\npm \left(\bigcup_{n=1}^\infty\mss_n\right)=\tfs{o}\text{-}\!\lim_{N\to\infty} \sum_{n=1}^N \npm(\mss_n)$.

	We supply $\smeas$ with the pointwise ordering.
	
	\begin{lemma}\label{res:positive_cone_as_expected}
		Let $\ms$ be a measurable space, and let $\vl$ be a \Dc\ vector lattice. Then $\pos{\smeas}=\posmeas$.
	\end{lemma}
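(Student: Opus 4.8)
The plan is to prove the two set inclusions $\posmeas\subseteq\pos{\smeas}$ and $\pos{\smeas}\subseteq\posmeas$ separately, the first being essentially immediate and the second requiring one small observation about the formulation of $\sigma$-additivity.

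For the inclusion $\posmeas\subseteq\pos{\smeas}$, I would simply note that $\posmeas\subseteq\smeas$ directly from the definition of $\smeas$ as the linear span of $\posmeas$, and that any $\npm\in\posmeas$ satisfies $\npm(\mss)\in\posos$, hence $\npm(\mss)\geq 0$, for every $\mss\in\alg$. Thus $\npm\geq 0$ in the pointwise ordering, i.e.\ $\npm\in\pos{\smeas}$.

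For the reverse inclusion $\pos{\smeas}\subseteq\posmeas$, I would take $\npm\in\pos{\smeas}$, so that $\npm\in\smeas$ and $\npm(\mss)\geq 0$ in $\os$ for every $\mss\in\alg$. Since every element of $\smeas$ is $\os$-valued (being a difference of two finite $\posos$-valued measures), this positivity gives $\npm(\mss)\in\posos$ for all $\mss$, and in particular $\npm(\pset)\in\posos$, so the candidate is finite. It then remains to verify that $\npm$ meets \cref{def:positive_pososext_valued_measure}: the identity $\npm(\emptyset)=0$ is clear, and for $\sigma$-additivity I would invoke the fact recorded just before the lemma, namely that any $\npm\in\smeas$ satisfies $\npm\lrpars{\bigcup_{n=1}^\infty\mss_n}=\textup{o-}\lim_{N\to\infty}\sum_{n=1}^N\npm(\mss_n)$ for every pairwise disjoint sequence $\seq{\mss}$.

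The one point requiring care, and the only genuine obstacle, is that \cref{def:positive_pososext_valued_measure} phrases $\sigma$-additivity as a \emph{supremum} of partial sums, whereas for a general element of $\smeas$ it is a priori only an \emph{order limit}. The resolution is the observation that, because each value $\npm(\mss_n)$ is now positive, the partial sums $\sum_{n=1}^N\npm(\mss_n)$ form an \emph{increasing} sequence; the order limit of an increasing sequence, when it exists, is its supremum, so the order-limit formulation collapses to exactly the supremum demanded by the definition. This yields $\npm\in\posmeas$ and completes the argument.
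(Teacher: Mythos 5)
Your proof is correct and follows essentially the same route as the paper: both arguments take a positive element of $\smeas$, invoke the order-limit form of $\sigma$-additivity recorded just before the lemma, and observe that positivity of the values makes the partial sums increasing, so the order limit coincides with the supremum required by \cref{def:positive_pososext_valued_measure}. You merely spell out the trivial inclusion $\posmeas\subseteq\pos{\smeas}$ and the finiteness of $\npm(\pset)$, which the paper leaves implicit.
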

	
	\begin{proof}	
		If $\npm,\npn\in\posmeas$ are such that $\npm-\npn\geq 0$, and if $\seq{\mss}$ is a pairwise disjoint sequence in $\alg$, then
		\begin{align*}
			(\npm-\npn)\left(\bigcup_{n=1}^\infty \mss_n\right)&=\tfs{o}\text{-}\!\!\!\lim_{N\to\infty} \sum_{n=1}^N (\npm-\npn)(\mss_n)\\
			&=\setsup_{N\geq 1}\sum_{n=1}^N (\npm-\npn)(\mss_n).
		\end{align*}
		Hence $\npm-\npn$ is $\sigma$-additive.
	\end{proof}
	
	A partially ordered vector space with a generating cone is a vector lattice if and only if every pair of positive elements has a supremum. Translating back  and forth, we thus have the following consequence of \cref{res:lattice_cone_of_positive_measures},  \cref{res:increasing_nets_of_measures_have_a_supremum}, and \cref{res:positive_cone_as_expected}.
	
	\begin{theorem}\label{res:vector_lattice_of_E-valued_measures}
		Let $\ms$ be a measurable space, and let $\os$ be a \Dc\ vector lattice. Then $\pos{\smeas}=\posmeas$, and $\smeas$ is a \Dc\ vector lattice.
		For $\npm$ and $\npn$ in $\smeas$, we have
		\begin{align}
			(\npm\vee\npn)(\mss)&=\setsup\set{\npm(\msstwo)+\npn(\mss\setminus \msstwo):\msstwo\in\alg \text{ and } \msstwo\subseteq\mss}\label{eq:sup_formula_E-valued}\\
			\intertext{and}
			(\npm\wedge\npn)(\mss)&=\setinf\set{\npm(\msstwo)+\npn(\mss\setminus \msstwo):\msstwo\in\alg \text{ and } \msstwo\subseteq\mss}\label{eq:inf_formula_E-valued}		
		\end{align}
		for $\mss\in\alg$. If $\net{\npm}$ is an increasing net in $\smeas$ that is bounded from above, then
		\[
		\left(\setsup_{\lambda\in\Lambda}\npm_\lambda\right)(\mss)=  \setsup_{\lambda\in\Lambda}\left(\npm_\lambda(\mss)\right)
		\]
		for $\mss\in\alg$; similarly for the infimum of a decreasing net that is bounded from below.
	\end{theorem}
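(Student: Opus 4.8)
The plan is to obtain the entire statement from the cone-level results \cref{res:lattice_cone_of_positive_measures} and \cref{res:increasing_nets_of_measures_have_a_supremum}, together with the identification $\pos{\smeas}=\posmeas$ of \cref{res:positive_cone_as_expected}, by the standard device of a generating cone together with translation. First I would record that $\posmeas$ generates $\smeas$ by definition, so that the quoted criterion reduces the vector lattice property to the existence of $\npm\vee\npn$ for $\npm,\npn\in\posmeas$. \cref{res:lattice_cone_of_positive_measures} provides this supremum in $\posextmeas$ and guarantees that it is finite, hence lies in $\posmeas$, where all values may be regarded as living in $\os$. The point to check is that this cone supremum is genuinely the supremum in $\smeas$: any $\sigma\in\smeas$ with $\sigma\geq\npm$ and $\sigma\geq\npn$ satisfies $\sigma\geq\npm\geq 0$, so $\sigma$ is positive and thus lies in $\posmeas$; therefore the upper bounds of $\set{\npm,\npn}$ in $\smeas$ coincide with those in $\posmeas$, and the least one is the cone supremum. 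This makes $\smeas$ a vector lattice for which \cref{eq:sup_formula} holds for positive $\npm,\npn$.

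To promote the formulas \cref{eq:sup_formula} and \cref{eq:inf_formula} to arbitrary $\npm,\npn\in\smeas$, I would use that every element of $\smeas$ is a difference of two members of $\posmeas$, pick $\rho\in\posmeas$ with $\npm+\rho,\npn+\rho\in\posmeas$ (e.g.\ the sum of the negative parts from such decompositions), and translate: $\npm\vee\npn=(\npm+\rho)\vee(\npn+\rho)-\rho$. Expanding the right-hand side with the positive-case formula and using the finite additivity identity $\rho(\msstwo)+\rho(\mss\setminus\msstwo)=\rho(\mss)$ together with the translation invariance of suprema in the \Dc\ space $\os$ (part~\partref{part:operations_in_extended_space_5} of \cref{res:operations_in_extended_space}) collapses the extra $\rho$-terms and delivers \cref{eq:sup_formula_E-valued}; the formula \cref{eq:inf_formula_E-valued} then follows either by the same computation or from $\npm\wedge\npn=-\bigl((-\npm)\vee(-\npn)\bigr)$.

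For the net statement and for \Dc ness I would again shift into the cone. Given an increasing net $\net{\npm}$ bounded above by some $\sigma\in\smeas$, fix an index $\lambda_0$ and consider the tail net $\set{\npm_\lambda-\npm_{\lambda_0}}_{\lambda\geq\lambda_0}$, which is increasing in $\posmeas$ and bounded above by the finite measure $\sigma-\npm_{\lambda_0}$. \cref{res:increasing_nets_of_measures_have_a_supremum} then supplies its supremum as the pointwise supremum and shows the latter is a finite measure; as in the first paragraph this is also its supremum in $\smeas$, and translating back by $\npm_{\lambda_0}$ yields $\setsup_{\lambda}\npm_\lambda$ together with the claimed pointwise evaluation (the decreasing, bounded-below case follows by negation). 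Finally, \Dc ness follows by the usual reduction: for a non-empty bounded-above set $A\subseteq\smeas$, the suprema of the finite subsets of $A$ form an increasing, bounded-above net whose supremum exists by the net result and equals $\setsup A$.

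I expect the only real obstacle to be the bookkeeping of these translations, specifically verifying that the suprema formed in the cones $\posmeas$ and $\posextmeas$ agree with those in the ambient space $\smeas$ (which rests on upper bounds of positive families being positive), that the constant shift $\rho$ commutes with the pointwise suprema via \cref{res:operations_in_extended_space}, and that the shifted measures remain finite so that the finite form of \cref{res:increasing_nets_of_measures_have_a_supremum} is applicable. The genuine analytic work, namely the $\sigma$-additivity of the supremum and infimum set functions and the existence of suprema of increasing nets of measures, has already been carried out in the cited results.
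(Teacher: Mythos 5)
Your proposal is correct and takes essentially the same route as the paper: the paper's entire ``proof'' is the remark preceding the theorem, namely that a partially ordered vector space with a generating cone is a vector lattice if and only if pairs of positive elements have suprema, so that the theorem follows from \cref{res:lattice_cone_of_positive_measures}, \cref{res:increasing_nets_of_measures_have_a_supremum}, and \cref{res:positive_cone_as_expected} by ``translating back and forth''. Your write-up simply makes those translations (cone suprema agreeing with suprema in $\smeas$, the shift by $\rho$, the tail-net argument, and the finite-subset reduction for \Dc ness) explicit, and all of these checks are sound.
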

	
	\begin{remark}\label{rem:hahn}
		When $\os$ in \cref{res:vector_lattice_of_E-valued_measures} is a vector lattice, one has
		\begin{equation}\label{eq:triangle_inequality_for_measure}
			\abs{\npm(\mss)}=\abs{\npm^+(\mss)-\npm^-(\mss)}\leq \npm^+(\mss)+\npm^-(\mss)=\abs{\npm}(\mss)
		\end{equation}
		for $\mss\in\alg$. Hence, just as in the real case (see \cite[Theorem~36.9]{aliprantis_burkinshaw_PRINCIPLES_OF_REAL_ANALYSIS_THIRD_EDITION:1998}),
		\[
		\abs{\npm}(\mss)\geq\setsup\lrset{\sum_{i=1}^n\abs{\npm(\mss_i)}: \mss_1,\dotsc,\mss_n\subseteq\mss\text{ are pairwise disjoint}}.
		\]
		for $\npm\in\smeas$ and $\mss\in\alg$. In the real case, it is a consequence of the Hahn decomposition that equality holds. For general $\os$, a Hahn decomposition need not exist. As an example, let $\pset=\set{p}$ be a one-point set and take $\alg=\set{\emptyset, \set{p}}$. Take $\os=\RR^2$, and set $\npm(\emptyset)\coloneqq (0,0)$ and $\npm(\set{p})\coloneqq (1,-1)$. Then $\pos{\npm}(\set{p})=(1,0)$ and $\npm^-(\set{p})=(0,1)$. Clearly, there does not exist a disjoint partition $\pset=\mss^+\cup \mss^-$ such that $\npm^\pm(\set{p})=\npm(\set{p}\cap \mss^\pm)$.
	\end{remark}

	\subsection{Vector lattices of $\os$-valued regular Borel measures}\label{subsec:E-valued_sign_measure_on_locally_compact_Hausdorff_space}
	
	Let $\ts$ be a locally compact Hausdorff space, and let $\vl$ be a \Dc\ vector lattice. We define $\rBsmeas$, the \emph{$\os$-valued regular Borel measures \uppars{on $\ts$}}, to be the linear subspace of the vector space of set maps from $\alg$ to $\os$ that is spanned by $\posrBmeas$. As a consequence of part~\partref{part:lattice_properties_of_measures_on_topological_spaces_2} of \cref{res:lattice_properties_of_measures_on_topological_spaces}, the elements of $\rBsmeas$ are of the form $\npm-\npn$ for $\npm,\npn\in\posrBmeas$.  We supply $\rBsmeas$ with the pointwise ordering.

	\begin{lemma}\label{res:positive_cone_as_expected_Borel} Let $\ts$ be a locally compact Hausdorff space, and let $\vl$ be a \Dc\ vector lattice.
		Then $\pos{\rBsmeas}=\posrBmeas$.
	\end{lemma}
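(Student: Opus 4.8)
The plan is to prove the two inclusions separately, the forward one being immediate and the reverse one carrying the substance of the argument.

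First I would dispose of $\posrBmeas\subseteq\pos{\rBsmeas}$, which is trivial: every $\npm\in\posrBmeas$ equals $\npm-0$ with the zero measure $0\in\posrBmeas$, hence lies in $\rBsmeas$, and it is pointwise positive since it is $\posos$-valued. For the reverse inclusion $\pos{\rBsmeas}\subseteq\posrBmeas$, I take $\sigma\in\pos{\rBsmeas}$. By the observation preceding this lemma (a consequence of part~\partref{part:lattice_properties_of_measures_on_topological_spaces_2} of \cref{res:lattice_properties_of_measures_on_topological_spaces}), we may write $\sigma=\npm-\npn$ with $\npm,\npn\in\posrBmeas$. Two things then have to be verified: that $\sigma$ is a genuine $\posos$-valued measure, and that it is regular.

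For the first point, $\sigma\in\smeasts$ by definition of $\rBsmeas$, and $\sigma\geq 0$ in the pointwise ordering. Applying \cref{res:positive_cone_as_expected} to the measurable space $(\ts,\borel)$ gives $\pos{\smeasts}=\posmeasts$, whence $\sigma\in\posmeasts$; in particular $\sigma$ is a finite $\posos$-valued measure. The key step is the regularity, and here I would exploit that $\npn\geq 0$ forces the pointwise domination $\sigma=\npm-\npn\leq\npm$, with $\npm\in\posrBmeas$. Thus $\sigma$ is a finite positive measure dominated by a regular Borel measure, and part~\partref{part:lattice_properties_of_measures_on_topological_spaces_3} of \cref{res:lattice_properties_of_measures_on_topological_spaces} applies directly to yield $\sigma\in\posrBmeas$. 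This completes the proof.

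The only place where the topology of $\ts$ enters is the domination $\sigma\leq\npm$ combined with the solidity of $\posrBmeas$ inside $\posmeasts$ recorded in part~\partref{part:lattice_properties_of_measures_on_topological_spaces_3}; there is no genuine obstacle beyond recognising that this earlier result performs exactly the work required, so that no direct manipulation of the inner/outer regularity conditions for $\sigma$ is needed.
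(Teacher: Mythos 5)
Your proof is correct and follows essentially the same route as the paper's: write the positive element as $\npm-\npn$ with $\npm,\npn\in\posrBmeas$, invoke \cref{res:positive_cone_as_expected} to get $\sigma$-additivity (membership in $\posmeasts$), and then use the domination $0\leq\npm-\npn\leq\npm$ together with part~\partref{part:lattice_properties_of_measures_on_topological_spaces_3} of \cref{res:lattice_properties_of_measures_on_topological_spaces} to obtain regularity. The only difference is that you spell out the trivial inclusion $\posrBmeas\subseteq\pos{\rBsmeas}$, which the paper leaves implicit.
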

	
	\begin{proof}
		Suppose that $\npm-\npn\geq 0$ for some $\npm,\npn\in\posrBmeas$. We know from \cref{res:positive_cone_as_expected} that $\npm-\npn\in\posmeasts$. As $0\leq\npm-\npn\leq\npm$, part~\partref{part:lattice_properties_of_measures_on_topological_spaces_3} of \cref{res:lattice_properties_of_measures_on_topological_spaces} implies that $\npm-\npn\in\posrBmeas$.
	\end{proof}
	
	\begin{theorem}\label{res:regular_borel_measures_form_ideal}
		Let $\ts$ be a locally compact Hausdorff space, and let $\vl$ be a \Dc\ vector lattice. Then $\pos{\rBsmeas}=\posrBmeas$, and $\rBsmeas$ is an ideal in the \Dc\ complete vector lattice $\smeasts$.
	\end{theorem}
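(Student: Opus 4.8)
The first assertion, $\pos{\rBsmeas}=\posrBmeas$, is precisely \cref{res:positive_cone_as_expected_Borel}, so it remains to show that $\rBsmeas$ is an ideal in $\smeasts$; recall that the latter is a \Dc\ vector lattice by \cref{res:vector_lattice_of_E-valued_measures}. My plan is to reduce everything to an elementary criterion: \emph{if $L$ is a vector lattice and $I$ is a linear subspace with $I=\pos{I}-\pos{I}$, where $\pos{I}\coloneqq I\cap\pos{L}$, such that $\pos{I}$ is solid in $\pos{L}$ — meaning that $0\leq y\leq x$ with $x\in\pos{I}$ and $y\in\pos{L}$ forces $y\in\pos{I}$ — then $I$ is an ideal in $L$.} I would include the short verification: given $x\in I$ written as $x=a-b$ with $a,b\in\pos{I}$, and $y\in L$ with $\abs{y}\leq\abs{x}$, the inequality $\abs{x}=\abs{a-b}\leq a+b\in\pos{I}$ together with solidity gives $\abs{x}\in\pos{I}$; since $0\leq y^+\leq\abs{y}\leq\abs{x}$ and $0\leq y^-\leq\abs{y}\leq\abs{x}$, solidity yields $y^+,y^-\in\pos{I}$, whence $y=y^+-y^-\in I$.

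It then suffices to verify the hypotheses of this criterion for $L=\smeasts$ and $I=\rBsmeas$. The space $\rBsmeas$ is a linear subspace of $\smeasts$ by construction, and $\rBsmeas=\posrBmeas-\posrBmeas$ because, as recorded just after the definition of $\rBsmeas$, part~\partref{part:lattice_properties_of_measures_on_topological_spaces_2} of \cref{res:lattice_properties_of_measures_on_topological_spaces} shows that each element of $\rBsmeas$ is a difference of two $\posos$-valued regular Borel measures. Moreover $\pos{\rBsmeas}=\posrBmeas$ by \cref{res:positive_cone_as_expected_Borel}, while $\pos{\smeasts}=\posmeasts$ by \cref{res:vector_lattice_of_E-valued_measures}.

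The remaining point is the solidity of $\posrBmeas$ in $\posmeasts$: if $\npn\in\posmeasts$ and $\npm\in\posrBmeas$ satisfy $\npn\leq\npm$, then $\npn\in\posrBmeas$. This is exactly part~\partref{part:lattice_properties_of_measures_on_topological_spaces_3} of \cref{res:lattice_properties_of_measures_on_topological_spaces}. Applying the criterion then completes the proof. I expect no real obstacle within this argument: all of the substantive work is already contained in part~\partref{part:lattice_properties_of_measures_on_topological_spaces_3} of \cref{res:lattice_properties_of_measures_on_topological_spaces} (which itself rests on \cref{res:regularity_transferred_concrete}), and here one only needs to match up the cones $\posrBmeas$, $\posmeasts$, and $\pos{\smeasts}$ carefully and to keep track of the pointwise ordering.
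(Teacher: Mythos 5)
Your proposal is correct and takes essentially the same route as the paper: the paper's proof is precisely the concrete instantiation of your abstract criterion, dominating $\abs{\npn}$ by a sum of two positive regular Borel measures and then invoking the solidity statement in part~\partref{part:lattice_properties_of_measures_on_topological_spaces_3} of \cref{res:lattice_properties_of_measures_on_topological_spaces} together with the closure of $\posrBmeas$ under the cone operations from part~\partref{part:lattice_properties_of_measures_on_topological_spaces_2}, exactly as you do. The only cosmetic difference is that the paper first records $\npm^{\pm}\in\posrBmeas$ via the decomposition $\npm^{+}=(\npn_1\vee\npn_2)-\npn_2$ and then bounds $\abs{\npn}\leq\npm^{+}+\npm^{-}$, whereas your bound $\abs{\npm}\leq a+b$ bypasses this intermediate step (and recovers it as a corollary).
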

	
	\begin{proof}
		We start by showing that $\npm^\pm\in\posrBmeas$ for $\npm\in\rBsmeas$. Writing $\npm=\npn_1-\npn_2$ for some $\npn_1,\npn_2\in\posrBmeas$, we have $\npm^+=(\npn_1\vee\npn_2)-\npn_2$. Since $0\leq \npm^+\leq \npn_1\vee\npn_2$, and since we know $\npn_1\vee\npn_2$ to be in $\posrBmeas$ by part~\partref{part:lattice_properties_of_measures_on_topological_spaces_2} of \cref{res:lattice_properties_of_measures_on_topological_spaces},
		part~\partref{part:lattice_properties_of_measures_on_topological_spaces_3} of \cref{res:lattice_properties_of_measures_on_topological_spaces} shows that $\npm^+\in\posrBmeas$. Likewise, $\npm^-\in\posrBmeas$.

		Suppose that $\npn\in\smeasts$ and $\npm\in\rBsmeas$ are such that $\abs{\npn}\leq\abs{\npm}$. Then $0\leq\npn^\pm\leq\npm^++\npm^-$. Part~\partref{part:lattice_properties_of_measures_on_topological_spaces_2} of \cref{res:lattice_properties_of_measures_on_topological_spaces} yields that $\npm^++\npm^-\in\posrBmeas$. Then part~\partref{part:lattice_properties_of_measures_on_topological_spaces_3} of \cref{res:lattice_properties_of_measures_on_topological_spaces} shows that $\npn^\pm\in\posrBmeas$, implying that $\npn\in\rBsmeas$.
	\end{proof}
	
	\subsection{Banach lattices of $\bl$-valued measures}\label{sec:banach_lattice_of_measures}
	
	Let $\ms$ be a measurable space, and let $\bl$ be a \Dc\ normed vector lattice. For $\npm\in\smeas$, set
	\begin{equation}\label{eq:norm_definition}	
		\norm{\npm}\coloneqq\norm{\,\abs{\npm}(\pset)\,}.
	\end{equation}
	Observing that, for $\mss\in\alg$,
	\begin{equation}\label{eq:uniform_convergence}
		\norm{\npm(\mss)}\leq\norm{\,\abs{\npm}(\mss)\,}\leq\norm{\,\abs{\npm}(\pset)\,}=\norm{\npm}
	\end{equation}
	as a consequence of \cref{eq:triangle_inequality_for_measure}, it is easily verified that this defines a vector lattice norm on $\smeas$.
	
	The following result is convenient to understand how various properties transfer between $\bl$ and $\smeas$.
	
	\begin{proposition} \label{res:contains_copy} 
		Let $\ms$ be a measurable space, and let $\bl$ be a \Dc\ complete Banach lattice. Suppose that $\pset\neq\emptyset$. Take $x\in\vl$. For $e\in\bl$, define $\npm_e\in\smeas$  by setting
		\begin{equation*}
			\npm_e(\mss)\coloneqq
			\begin{cases}
				e&\text{ if }x\in\mss;\\
				0&\text{ if }x\notin\mss.
			\end{cases}
		\end{equation*}
		for $\mss\in\alg$. Define $\psi\colon\bl\to\smeas$ by setting $\psi(e)=\npm_e$ for $e\in\bl$. Then $\psi$ is an isometric embedding of $\bl$ as a band in $\smeas$.
	\end{proposition}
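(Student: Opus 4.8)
The plan is to establish, in order, that $\psi$ is a well-defined linear bipositive lattice homomorphism, that it is isometric, and that its range is a band; the band property is where the genuine work lies.

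First I would check that $\psi$ lands in $\smeas$ and respects the structure. For $e\in\pos{\bl}$ the set map $\npm_e$ is $\pos{\bl}$-valued and $\sigma$-additive — any pairwise disjoint sequence $\seq{\mss}$ contains the point $x$ in at most one term, so both sides of the $\sigma$-additivity identity collapse to $e$ or to $0$ — whence $\npm_e\in\posmeas$; since the pointwise definition makes $\psi$ linear, $\psi$ maps all of $\bl$ into $\smeas$. It is bipositive, as $\npm_e\geq 0$ forces $e=\npm_e(\pset)\geq 0$ (using $\pset\neq\emptyset$ and $x\in\pset$), the converse being clear. To see that $\psi$ is a lattice homomorphism I would invoke \cref{res:vector_lattice_of_E-valued_measures}: evaluating \cref{eq:inf_formula_E-valued} gives $\npm_{e_1}\wedge\npm_{e_2}=\npm_{e_1\wedge e_2}$, since for $x\in\mss$ the infimum there ranges over the two-element set $\set{e_1,e_2}$ and vanishes when $x\notin\mss$. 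A bipositive lattice homomorphism is injective, so $\psi$ is a vector lattice embedding; finally $\abs{\npm_e}=\npm_{\abs e}$ yields $\norm{\psi(e)}=\norm{\,\abs{\npm_e}(\pset)\,}=\norm{\,\abs e\,}=\norm{e}$, the desired isometry.

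It then remains to prove that $\psi(\bl)$ is an order closed ideal. For the ideal property I would take $\npm\in\smeas$ with $\abs{\npm}\leq\abs{\npm_e}=\npm_{\abs e}$; for every $\mss$ with $x\notin\mss$ this gives $\abs{\npm}(\mss)\leq\npm_{\abs e}(\mss)=0$, so that $\npm(\msstwo)=0$ for all measurable $\msstwo\subseteq\mss$ by the triangle inequality \cref{eq:triangle_inequality_for_measure}. Decomposing $\pset=\mss\cup(\pset\setminus\mss)$ and using finite additivity then shows $\npm(\mss)=\npm(\pset)$ when $x\in\mss$ and $\npm(\mss)=0$ otherwise, i.e.\ $\npm=\npm_{\npm(\pset)}\in\psi(\bl)$. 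For order closedness I would reduce a subset $D\subseteq\psi(\bl)$ admitting a supremum $\npm$ in $\smeas$ to an increasing net: since $\psi$ is a lattice homomorphism the finite suprema of members of $D$ stay in $\psi(\bl)$ and form an increasing net $\npm_{e_\lambda}\uparrow\npm$ (in particular bounded above) with the same supremum, whereupon \cref{res:vector_lattice_of_E-valued_measures} yields $\npm(\mss)=\setsup_\lambda\npm_{e_\lambda}(\mss)$, which is $0$ for $x\notin\mss$ and $\setsup_\lambda e_\lambda=\npm(\pset)$ for $x\in\mss$; again $\npm=\npm_{\npm(\pset)}\in\psi(\bl)$.

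Throughout, the point demanding care — and the main obstacle — is that the singleton $\set{x}$ need not belong to $\alg$, so the value governing $\npm$ cannot be read off as $\npm(\set{x})$ but must be recovered as $\npm(\pset)$ via additivity; it is also exactly the finiteness $\npm(\pset)\in\bl$ that guarantees the suprema appearing in the band argument exist in $\bl$.
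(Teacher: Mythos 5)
Your proof is correct and takes essentially the same route as the paper's: the lattice homomorphism property via the explicit formulas of \cref{res:vector_lattice_of_E-valued_measures} (the paper uses \cref{eq:sup_formula_E-valued}, you use \cref{eq:inf_formula_E-valued}), the isometry from $\abs{\npm_e}=\npm_{\abs{e}}$, the ideal property by evaluating on sets not containing $x$ and recovering the value as $\npm(\pset)$ through additivity, and the band property via an increasing net together with the setwise computation of its supremum. The only cosmetic differences are that the paper splits the dominated measure into $\npn^\pm$ where you apply the triangle inequality \cref{eq:triangle_inequality_for_measure} directly, and that it invokes the description of the band generated by an ideal (a net $\npm_{e_\lambda}\uparrow\abs{\npm}$) where you verify order closedness directly by passing to finite suprema; these are interchangeable.
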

	
	\begin{remark}
		Although we shall not need this, we note the following expression for the band projection $P\colon\smeas\to\psi(\bl)$ in \cref{res:contains_copy}. For $\npm\in\posmeas$ and $\mss\in\alg$, it is not difficult to verify that
		\begin{equation*}
			[P(\npm)](\mss)=
			\begin{cases}
				\inf \set{\npm(\msstwo):\msstwo\in\alg,\,x\in\msstwo}&\text{ if }x\in\mss;\\
				0&\text{ if }x\notin\mss.
			\end{cases}	
		\end{equation*}
	\end{remark}

	\begin{theorem}\label{res:banach_lattice_of_signed_measures_general}
		Let $\ms$ be a measurable space, and let $\bl$ be a \Dc\ Banach lattice. Then $\smeas$ is a \Dc\ Banach lattice in the norm from \cref{eq:norm_definition}. Furthermore:
		\begin{enumerate}
			\item\label{part:banach_lattice_of_signed_measures_general_1} if $\bl$ is an AL-space, then so is $\smeas$; when $\pset\neq\emptyset$, the converse holds;
			\item\label{part:banach_lattice_of_signed_measures_general_2} if $\bl$ is order continuous, then so is $\smeas$; when $\pset\neq\emptyset$, the converse holds;
			\item\label{part:banach_lattice_of_signed_measures_general_3} if $\bl$ has a Levi norm, then so has $\smeas$; when $\pset\neq\emptyset$, the converse holds;
			\item\label{part:banach_lattice_of_signed_measures_general_4} if $\bl$ is a KB-space, then so is $\smeas$; when $\pset\neq\emptyset$, the converse holds.
		\end{enumerate}
	\end{theorem}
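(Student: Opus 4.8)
The plan is to separate the two assertions: by \cref{res:vector_lattice_of_E-valued_measures} the space $\smeas$ is already a \Dc\ vector lattice with $\pos{\smeas}=\posmeas$, and the paragraph preceding the theorem has verified that \cref{eq:norm_definition} defines a lattice norm, so only \emph{norm completeness} and the four equivalences remain. For completeness I would not try to identify a limit of an arbitrary Cauchy sequence directly, since the delicate point is keeping the candidate limit inside $\smeas$ (rather than obtaining a merely finitely additive $\bl$-valued set map). Instead I would use the criterion that a normed vector lattice is complete once every sequence of \emph{positive} elements that is norm-absolutely summable has norm-convergent partial sums; the reduction to positive sequences is immediate from the lattice-norm inequalities $\norm{\npm_n^\pm}\le\norm{\npm_n}$, and ``absolutely summable implies summable'' is equivalent to completeness.

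So let $\npm_n\in\posmeas$ with $\sum_{n=1}^\infty\norm{\npm_n}<\infty$, i.e.\ $\sum_n\norm{\npm_n(\pset)}<\infty$. Because $\bl$ is a Banach lattice and $0\le\npm_n(\mss)\le\npm_n(\pset)$ gives $\norm{\npm_n(\mss)}\le\norm{\npm_n(\pset)}$, the series $\sum_n\npm_n(\mss)$ converges in $\bl$ for every $\mss\in\alg$. Setting $\npm(\mss)\coloneqq\setsup_N\sum_{n=1}^N\npm_n(\mss)$ (finite, and equal to the norm-convergent sum) and observing that $\npm(\pset)=\sum_n\npm_n(\pset)\in\posos$ is finite, the sequential case of \cref{res:increasing_nets_of_measures_have_a_supremum}, available since $\bl$ is in particular \sDc, shows that $\npm\in\posmeas$ and $\sum_{n=1}^N\npm_n\uparrow\npm$ in $\posmeas$. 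Since $\npm-\sum_{n=1}^N\npm_n\ge 0$, we obtain $\norm{\npm-\sum_{n=1}^N\npm_n}=\norm{\npm(\pset)-\sum_{n=1}^N\npm_n(\pset)}\to 0$, the required convergence. Hence $\smeas$ is a \Dc\ Banach lattice.

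For the forward implications in \partref{part:banach_lattice_of_signed_measures_general_1}--\partref{part:banach_lattice_of_signed_measures_general_3} I would compute directly, using from \cref{res:vector_lattice_of_E-valued_measures} that lattice operations, suprema, and infima in $\smeas$ are pointwise, together with $\norm{\npm}=\norm{\npm(\pset)}$ for $\npm\in\posmeas$. For \partref{part:banach_lattice_of_signed_measures_general_1}: for $\npm,\npn\in\posmeas$ one has $\norm{\npm+\npn}=\norm{\npm(\pset)+\npn(\pset)}$, which equals $\norm{\npm(\pset)}+\norm{\npn(\pset)}=\norm{\npm}+\norm{\npn}$ when $\bl$ is an AL-space. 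For \partref{part:banach_lattice_of_signed_measures_general_2}: if $\npm_\lambda\downarrow 0$ in $\smeas$, the pointwise description of infima yields $\npm_\lambda(\pset)\downarrow 0$ in $\bl$, so order continuity of $\bl$ gives $\norm{\npm_\lambda}=\norm{\npm_\lambda(\pset)}\to 0$. For \partref{part:banach_lattice_of_signed_measures_general_3}: an increasing net $\net{\npm}$ in $\posmeas$ with $\norm{\npm_\lambda}\le C$ has $\npm_\lambda(\pset)$ increasing and norm bounded, hence with a supremum $s$ in $\bl$ by the Levi property; then for each $\mss$ the increasing net $\npm_\lambda(\mss)$ is dominated by $s$, so its supremum $\npm(\mss)$ exists in the \Dc\ space $\bl$ with $\npm(\pset)=s$ finite, and \cref{res:increasing_nets_of_measures_have_a_supremum} makes $\npm=\setsup_\lambda\npm_\lambda$ an element of $\posmeas$.

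For the converse implications (assuming $\pset\neq\emptyset$) I would transport everything through the isometric embedding $\psi\colon\bl\to\smeas$ of \cref{res:contains_copy}, whose image is a \emph{band}. As $\psi$ is a bipositive isometric vector lattice homomorphism and $\psi(\bl)$ is order closed, positivity, norms, and order limits of nets in $\psi(\bl)$ are faithfully reflected to $\bl$: for \partref{part:banach_lattice_of_signed_measures_general_1} one reads off the AL-identity on $\psi(\pos{\bl})$; for \partref{part:banach_lattice_of_signed_measures_general_2}, $x_\lambda\downarrow 0$ in $\bl$ gives $\psi(x_\lambda)\downarrow 0$ in $\smeas$ (the infimum computed in the band agreeing with that in $\smeas$), whence $\norm{x_\lambda}=\norm{\psi(x_\lambda)}\to 0$; and for \partref{part:banach_lattice_of_signed_measures_general_3} a norm-bounded increasing net in $\pos{\bl}$ maps to one in $\smeas$ whose supremum, existing by the Levi property of $\smeas$, lies in the band and descends to a supremum in $\bl$. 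Finally, \partref{part:banach_lattice_of_signed_measures_general_4} requires no separate argument: since the KB-spaces are exactly the Banach lattices with an order continuous Levi norm, it is the conjunction of \partref{part:banach_lattice_of_signed_measures_general_2} and \partref{part:banach_lattice_of_signed_measures_general_3}. I expect the main obstacle to be the completeness step, precisely the issue of keeping the limit inside $\posmeas$; the reduction to positive summable sequences combined with \cref{res:increasing_nets_of_measures_have_a_supremum} is what resolves it, while for the converses the decisive structural fact is that $\psi(\bl)$ is a band, so that suprema and infima never leave the embedded copy of $\bl$.
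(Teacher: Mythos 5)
Your proposal is correct and follows essentially the same route as the paper: completeness is reduced to absolutely summable positive series whose pointwise sums are shown to lie in $\posmeas$ via \cref{res:increasing_nets_of_measures_have_a_supremum}, the forward implications in \partref{part:banach_lattice_of_signed_measures_general_1}--\partref{part:banach_lattice_of_signed_measures_general_3} are computed through evaluation at $\pset$ using the pointwise lattice operations of \cref{res:vector_lattice_of_E-valued_measures}, the converses are transported through the band embedding of \cref{res:contains_copy}, and \partref{part:banach_lattice_of_signed_measures_general_4} is obtained from \partref{part:banach_lattice_of_signed_measures_general_2} and \partref{part:banach_lattice_of_signed_measures_general_3}. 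The only cosmetic difference is that the paper invokes the Riesz--Fischer criterion of \cite[Theorem~16.2]{zaanen_INTRODUCTION_TO_OPERATOR_THEORY_IN_RIESZ_SPACES:1997} (so that only the existence of the supremum of the partial sums needs checking), whereas you verify the norm convergence $\norm{\npm-\sum_{n=1}^N\npm_n}\to 0$ directly, which is equally valid.
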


	\begin{proof}
		According to \cite[Theorem~16.2]{zaanen_INTRODUCTION_TO_OPERATOR_THEORY_IN_RIESZ_SPACES:1997}, the completeness of $\smeas$ is equivalent to it having the Riesz-Fischer property. Suppose, therefore, that $\{\npn_k\}_{k=1}^\infty\subseteq\posmeas$ is such that $\sum_{k=1}^\infty\norm{\npn_k}<\infty$. Setting $\npm_n\coloneqq\sum_{k=1}^n \npn_k$ for $n\geq 1$, we have to show that the increasing sequence $\seq{\npm}$ in $\posmeas$ has a supremum in $\posmeas$. For $\mss\in\alg$, we have $\sum_{k=1}^\infty\norm{\npn_k(\mss)}\leq \sum_{k=1}^\infty\norm{\npn_k(\pset)}=\sum_{k=1}^\infty\norm{\npn_k}<\infty$. Hence $\sum_{k=1}^\infty \npn_k(\mss)$ converges in norm, and then $\npm_n(\mss)\uparrow\sum_{k=1}^\infty \npn_k(\mss)$. According to \cref{res:increasing_nets_of_measures_have_a_supremum}, setting $\npm(\mss)\coloneqq\sum_{k=1}^\infty \npn_k(\mss)$ for $\mss\in\alg$ defines an element $\npm$ of $\posmeas$, and $\npm_n\uparrow\npm$. Thus $\smeas$ is a Banach lattice. Its Dedekind completeness follows from \cref{res:vector_lattice_of_E-valued_measures}.
		
		We prove part~\partref{part:banach_lattice_of_signed_measures_general_1}.
		
		Suppose that $\bl$ is an AL-space. Then we have, for $\npm,\npn\in\posmeas$,
		\[
		\norm{\npm}+\norm{\npn}=\norm{\npm(\pset)}+\norm{\npn(\pset)}=\norm{\npm(\pset)+\npn(\pset)}=\norm{\npm+\npn},
		\]
		so that $\smeas$ is an AL-space. Conversely, when $\smeas$ is an AL-space and $\pset\neq\emptyset$, \cref{res:contains_copy} implies that $\bl$ is an AL-space.
		
		We prove part~\partref{part:banach_lattice_of_signed_measures_general_2}.
		Suppose that $\bl$ is order continuous. Let $\net{\npm}\subseteq\posmeas$ be a net such that $\npm_\lambda\downarrow 0$. According to \cref{res:vector_lattice_of_E-valued_measures} we have $\npm_\lambda(\pset)\downarrow 0$. Hence $\norm{\npm_\lambda}=\norm{\npm_\lambda(\pset)}\downarrow 0$, so that $\smeas$ is order continuous.  When $\smeas$ is order continuous and $\pset\neq\emptyset$, \cref{res:contains_copy} implies that $\bl$ is order continuous.
		
		We prove part~\partref{part:banach_lattice_of_signed_measures_general_3}.
		
		Suppose that $\bl$ has a Levi norm. Let $\net{\npm}$ be a norm bounded increasing net in $\posmeas$. Then $\{\npm_\lambda(\ts)\}_{\lambda\in\Lambda}$ is a norm bounded increasing net in $\posos$. Hence $\npm_\lambda(\ts)\uparrow e$ for some $e\in\posos$. We know from \cref{res:increasing_nets_of_measures_have_a_supremum} that setting $\npm(\mss)\coloneqq\sup_\lambda \npm_\lambda(\mss)$ for $\mss$ defines an $\pososext$-valued measure on $\alg$. As $\npm(X)=e$, $\npm$ is finite, and then $\npm_{\lambda}\uparrow\npm$ in $\smeas$ by  \cref{res:increasing_nets_of_measures_have_a_supremum}. Hence $\smeas$ has a Levi norm.
		
		Since the property of having a Levi norm inherits from a Banach lattice to a band in it, \cref{res:contains_copy} implies that $\bl$ has a Levi norm when $\smeas$ does and $\pset\neq\emptyset$.
		
		Part~\partref{part:banach_lattice_of_signed_measures_general_4} can be proved similarly to part~\partref{part:banach_lattice_of_signed_measures_general_3}. As the KB-spaces are precisely the Banach lattices with an order continuous Levi norm, it also follows from the parts \partref{part:banach_lattice_of_signed_measures_general_2} and \partref{part:banach_lattice_of_signed_measures_general_3} combined.
	\end{proof}

	\subsection{Banach lattices of $\bl$-valued regular Borel measures}\label{sec:banach_lattice_of_regular_borel_measures}
	
	We conclude our study of cones and vector lattices of measures with the following.
	
	\begin{theorem}\label{res:banach_lattice_of_signed_measures_topological_spaces}
		Let $\ts$ be a locally compact Hausdorff space, and let $\bl$ be an order continuous Banach lattice. Then $\smeasts$ is an order continuous Banach lattice, and $\rBsmeas$ is a band in it.  Furthermore:
		\begin{enumerate}
			\item\label{part:banach_lattice_of_signed_measures_topological_space_1} if $\bl$ is an AL-space, then so is $\rBsmeas$; when $\pset\neq\emptyset$, the converse holds;
			\item\label{part:banach_lattice_of_signed_measures_topological_space_2} if $\bl$ has a Levi norm, then so has $\rBsmeas$; when $\pset\neq\emptyset$, the converse holds;
			\item\label{part:banach_lattice_of_signed_measures_topological_space_3} if $\bl$ is a KB-space, then so is $\rBsmeas$; when $\pset\neq\emptyset$, the converse holds.
		\end{enumerate}
	\end{theorem}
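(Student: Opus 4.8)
The plan is to bootstrap everything from the already-established structure of $\smeasts$ and from the embedding of \cref{res:contains_copy}, the one genuinely new ingredient being that regularity survives norm limits. Recalling that an order continuous Banach lattice is Dedekind complete, I first apply \cref{res:banach_lattice_of_signed_measures_general} to the measurable space $(\ts,\borel)$: this gives that $\smeasts$ is a Dedekind complete Banach lattice, and part~\partref{part:banach_lattice_of_signed_measures_general_2} gives that it is order continuous. By \cref{res:regular_borel_measures_form_ideal}, $\rBsmeas$ is an ideal in $\smeasts$, so to see it is a band it suffices to show that it is norm closed: a norm closed ideal of an order continuous Banach lattice is automatically a band. (Directly: an ideal is a band as soon as it contains the supremum of each of its upward directed order bounded sets of positive elements; if $\pos{\rBsmeas}\ni\npm_\lambda\uparrow\npm$ in $\smeasts$, then $\npm-\npm_\lambda\downarrow 0$, so $\npm_\lambda\to\npm$ in norm by order continuity, whence $\npm\in\rBsmeas$ once closedness is known.) Being a norm closed band, $\rBsmeas$ is itself a Banach lattice in the restricted norm.

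For norm-closedness, by the ideal property of \cref{res:regular_borel_measures_form_ideal} and the norm-continuity of the lattice operations (which give $\npm_n^\pm\to\npm^\pm$ with $\npm_n^\pm\in\posrBmeas$), it is enough to prove: if $\npm_n\in\posrBmeas$ and $\npm_n\to\npm$ in norm, then $\npm\in\posrBmeas$. Put $w_n\coloneqq\abs{\npm-\npm_n}(\ts)\in\pos\bl$, so that $\norm{w_n}\to 0$, and recall from \cref{eq:uniform_convergence} that $\abs{(\npm-\npm_n)(\mss)}\leq w_n$ for every $\mss\in\borel$. For an open $V$, set $s\coloneqq\setsup\set{\npm(K):K\text{ compact},\,K\subseteq V}$ (which exists and is $\leq\npm(V)$ by Dedekind completeness). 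From $\npm(K)\geq\npm_n(K)-w_n$ and the inner regularity of $\npm_n$ we get $s\geq\npm_n(V)-w_n\geq\npm(V)-2w_n$, so $0\leq\npm(V)-s\leq 2w_n$; monotonicity of the lattice norm and $\norm{w_n}\to 0$ then force $\npm(V)=s$. Dually, for $\mss\in\borel$ and $i\coloneqq\setinf\set{\npm(W):W\text{ open},\,\mss\subseteq W}$, the estimate $\npm(W)\leq\npm_n(W)+w_n$ together with the outer regularity of $\npm_n$ yields $i\leq\npm_n(\mss)+w_n\leq\npm(\mss)+2w_n$, whence $\npm(\mss)=i$. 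Since $\npm(K)\in\bl$ trivially, $\npm\in\posrBmeas$, which proves norm-closedness and hence that $\rBsmeas$ is a band.

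For the three forward implications, \cref{res:banach_lattice_of_signed_measures_general} transfers the AL-, Levi-, and KB-properties from $\bl$ to $\smeasts$, and each of these passes from a Banach lattice to a band in it: for AL this is immediate from additivity of the norm on the positive cone, for the Levi property this is the inheritance fact already invoked in the proof of \cref{res:banach_lattice_of_signed_measures_general}, and a KB-space is exactly an order continuous Banach lattice with a Levi norm. For the converses, assume $\pset\neq\emptyset$, fix a point $x\in\ts$, and consider the isometric band embedding $\psi\colon\bl\to\smeasts$ of \cref{res:contains_copy}, where $\psi(e)$ is the point mass carrying $e$ at $x$. Because $\ts$ is Hausdorff, singletons are compact and closed, and one checks directly that each $\psi(e)$ is a regular Borel measure: inner regularity at an open $V$ is witnessed by $\set{x}$ when $x\in V$, and outer regularity at a Borel set $\mss$ with $x\notin\mss$ is witnessed by the open set $\ts\setminus\set{x}$. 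Thus $\psi(\bl)\subseteq\rBsmeas$; being a band in $\smeasts$ contained in the band $\rBsmeas$, it is a band in $\rBsmeas$, so the AL-, Levi-, and KB-properties descend from $\rBsmeas$ to $\psi(\bl)\cong\bl$.

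The main obstacle is the norm-closedness step, i.e.\ the stability of inner regularity at open sets and outer regularity at all Borel sets under norm limits. What makes it tractable here, in sharp contrast to the purely order-theoretic arguments of \cref{sec:cones_and_vector_lattices_of_measures} where $\varepsilon$-arguments were deliberately avoided, is precisely that $\smeasts$ is a Banach lattice: the uniform bound \cref{eq:uniform_convergence} reinstates genuine $\varepsilon$-control simultaneously over all Borel sets, and this is exactly what is needed to interchange the limit with the supremum over compacta and the infimum over open neighbourhoods.
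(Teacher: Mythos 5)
Your proof is correct, and its overall skeleton is the same as the paper's: both obtain $\smeasts$ from \cref{res:banach_lattice_of_signed_measures_general}, reduce the band claim to norm-closedness of the ideal $\rBsmeas$ (via \cref{res:regular_borel_measures_form_ideal} and the fact that closed ideals in an order continuous Banach lattice are bands), reduce to positive sequences via $\npm_n^\pm\to\npm^\pm$, transfer the AL/Levi/KB properties to the band $\rBsmeas$, and run the converses through the point-mass embedding $\psi$ of \cref{res:contains_copy}. The one place where you genuinely diverge is the closedness step. The paper argues with a scalar $\varepsilon/3$ estimate: it picks $n_0$ with $\norm{\npm-\npm_{n_0}}<\varepsilon/3$, uses the \emph{order continuity of the norm} of $\bl$ to upgrade the increasing net $\{\npm_{n_0}(K)\}_{K}$ to norm convergence $\npm_{n_0}(K)\to\npm_{n_0}(V)$, deduces $\npm(K)\to\npm(V)$ in norm, and then recovers the order supremum because the net is increasing. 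You instead squeeze with the vector majorant $w_n=\abs{\npm-\npm_n}(\ts)$, obtaining $0\leq\npm(V)-s\leq 2w_n$ (and dually $0\leq i-\npm(\mss)\leq 2w_n$), which uses only \cref{eq:uniform_convergence}, Dedekind completeness (to know $s$ and $i$ exist), and monotonicity of the lattice norm. That buys a slightly stronger statement: your argument shows $\rBsmeas$ is norm closed in $\smeasts$ for \emph{any} Dedekind complete Banach lattice $\bl$, with order continuity entering only through the closed-ideal-is-band step; it also treats inner and outer regularity symmetrically without any net-convergence detour. The paper's version stays closer to the classical scalar proof. One further small merit on your side: in the converse direction you verify explicitly that each $\psi(e)$ is a regular Borel measure (singletons compact, their complements open), a point the paper's proof uses but leaves implicit; your verification is correct, modulo the trivially omitted case of inner regularity at open $V$ with $x\notin V$, where the empty compact set witnesses $\psi(e)(V)=0$.
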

	
	\begin{proof}
		The order continuity of $\rBsmeas$ is a special case of \cref{res:banach_lattice_of_signed_measures_general}. We show that $\rBsmeas$ is a band. Since $\rBsmeas$ is an ideal in $\smeas$ by \cref{res:regular_borel_measures_form_ideal}, and since the (projection) bands in an order continuous Banach lattice are precisely its norm closed ideals (see \cite[Corollary~2.4.4]{meyer-nieberg_BANACH_LATTICES:1991}), it is sufficient to show that $\rBsmeas$ is norm closed.  Take a sequence $\seq{\npm}\subseteq\rBsmeas$ and suppose that $\npm_n\to\npm$. Then also $\npm_n^\pm\to\npm^\pm$. Hence it is sufficient to show that $\npm\in\posrBmeas$ when $\npm_n\to\npm$ and $\seq{\npm}\subseteq\posrBmeas$.
		
		Take an open subset $V$ of $\ts$. We shall show that $\npm$ is inner regular at $V$.  Let $\varepsilon>0$. Take an $n_0$ such that $\norm{\npm-\npm_{n_0}}<\varepsilon/3$, so that $\norm{\npm(\mss)-\npm_{n_0}(\mss)}<\varepsilon/3$ for all $\mss\in\alg$ by \cref{eq:uniform_convergence}. Let $\mathscr K$ denote the collection of compact subsets of $V$, ordered by inclusion. As $\npm_{n_0}$ is inner regular at $V$, we have $\{\npm_{n_0}(K)\}_{K\in\mathscr K}\uparrow \npm_{n_0}(V)$, and then $\npm_{n_0}(K)\to\npm_{n_0}(V)$ by the order continuity of the norm of $\bl$. Hence there exists a $K_0\in\mathscr K$ such that $\norm{\npm_{n_0}(V)-\npm_{n_0}(K)}<\varepsilon/3$ for all $K\in\mathscr K$ with $K\supseteq K_0$. We then have $\norm{\npm(V)-\npm(K)}<\varepsilon$ for all $K\in\mathscr K$ with $K\supseteq K_0$. Thus $\npm(K)\to\npm(V)$, and then $\{\npm(K)\}_{K\in\mathscr K}\uparrow \npm(V)$ as the net is increasing.
		
		The proof that $\npm$ is outer regular at all elements of $\alg$ is similar.
		
		The forward implications in the parts~\partref{part:banach_lattice_of_signed_measures_topological_space_1}, \partref{part:banach_lattice_of_signed_measures_topological_space_2} and \partref{part:banach_lattice_of_signed_measures_topological_space_3} follow from \cref{res:banach_lattice_of_signed_measures_general} as the pertinent properties of $\smeasts$ are inherited by the band $\rBsmeas$ in it. For the converse implications when $\pset\neq\emptyset$, we note that, in the notation of \cref{res:contains_copy}, the band $\psi(\bl)$ in $\smeasts$ is also a band in the ideal $\rBsmeas$ of $\smeasts$. Hence it inherits the pertinent properties from $\rBsmeas$.
	\end{proof}
	
	\begin{remark}\label{rem:band}\quad
		\begin{enumerate}
			\item Let $P_{\tfs{rB}}\colon\smeasts\to\rBsmeas$ be the order projection corresponding to the projection band $\rBsmeas$.  Take $\npm\in\posmeasts$ and suppose that $\npm\in\posrBmeas$ is such that $\npn\leq\npm$. Then $0\leq \npn\leq P_{\tfs{rB}}\npm$. Hence $P_{\tfs{rB}}\npm$ is the largest $\posos$-valued regular Borel measure on $\ts$ that is dominated by $\npm$.
			\item For $\os=\RR$, the fact that $\spaceofmeasuresletter_{\tfs{rB}}(\ts,\borel,\RR)$ is a band in $\spaceofmeasuresletter(\ts,\borel,\RR)$ is a special case of \cite[437F(b)]{fremlin_MEASURE_THEORY_VOLUME_4:2006} and of \cite[Theorem~2.8(i)]{van_amstel_van_der_walt:2024}.
		\end{enumerate}
	\end{remark}
	
	\section{Riesz representation theorems for vector lattices and Banach lattices of regular operators}\label{sec:riesz_representation_theorems}
	
	\noindent In this section, we shall establish a number of isomorphisms between vector lattices (resp.\ cones) of $\os$-valued measures (resp.\ $\pososext$-valued regular Borel measures) and vector lattices (resp.\ cones) of regular (resp.\ positive) operators. When the domain and codomain of these isomorphisms are Banach lattices, the isomorphisms are isometric.
	
	The general set-up for vector lattices of $\os$-valued measures is as follows. For cones of $\pososext$-valued regular Borel measures it is similar.\footnote{It is a little simpler as there is no need to decompose an $\pososext$-valued regular measure into two positive ones. There are no integrability issues as the only functions to be considered are those in $\contcts$.}.
	
	Let $\ms$ be a measurable space, and let $\vl$ be a \Dc\ vector lattice. Take $\mu\in\smeas$, and choose a decomposition $\npm=\npn_1-\npn_2$ with $\npn_1,\npn_2\in\posmeas$. We let $\boundedmeasfun$ denote the bounded measurable real-valued functions on $\pset$,  and define $\opint{\npm}\colon\boundedmeasfun\to\vl$ by setting
	\begin{equation*}
		\opint{\npm}(f)\coloneqq\orderintegral{\pset}{f}{\npn_1}-\orderintegral{\pset}{f}{\npn_2}
	\end{equation*}
	for $f\in\boundedmeasfun$. For $\xi_1,\xi_2\in\posmeas$, it is a consequence of the definition of the integral that $\orderintegral{\pset}{f}{\xi_1}+\orderintegral{\pset}{f}{\xi_2}=\orderintegral{\pset}{f}{(\xi_1+\xi_2)}$ for $f\in\boundedmeasfun$. It follows from this that the definition of $\opint{\npm}$ is independent of the choice of the decomposition $\npm=\npn_1-\npn_2$. Using that, also as consequence of the definition of the integral, $\orderintegral{\pset}{f}{(\alpha\npm)}= \alpha\orderintegral{\pset}{f}{\npm}$ for $f\in\boundedmeasfun$, $\alpha\geq 0$, and $\npm\in\posmeas$, one verifies easily that the map $\npm\mapsto\opint{\npm}$ defines a positive operator $\opint{}\colon\smeas\to\regular(\boundedmeasfun, E)$. On restricting the $\npm$ to be taken from a vector sublattice $N(\pset,\alg,\vl)$ of $\smeas$, and on restricting the ensuing $\opint{\npm}$ to a vector sublattice $\vltwo$ of $\boundedmeasfun$ one obtains a positive operator
	\[
	\opint{}\colon N(\pset,\alg,\vl) \to\regular(\vltwo, E).
	\]
	We have used the same symbol $\opint{}$ here, suppressing the dependence on $N(\pset,\alg,\vl)$ and $\vltwo$, and shall do likewise in the sequel to avoid a further burdening of the notation.
	
	For the decomposition $\npm=\npn^+-\npn^-$, the triangle inequality in \cref{eq:triangle_inequality} implies that
	\begin{equation*}
		\abs{\opint{\npm} (f)}\leq \norm{f}\cdot\abs{\npm}(\pset)
	\end{equation*}
	for $f\in\boundedmeasfun$ and $\npm\in\smeas$.
	Therefore, when the vector sublattice $\vltwo$ of $\boundedmeasfun$ is supplied with the uniform norm, the map $\npm\mapsto\opint{\npm}$ defines, in fact, a positive operator
	\[
	\opint{}\colon N(\pset,\alg,\vl)\to\nob(\vltwo,\vl).
	\]
	
	In the contexts to be considered below, $\opint{}\colon N(\pset,\alg,\vl)\to\nob(\vltwo,\vl)$ is a vector lattice homomorphism. In this case, when $\vltwo$ is supplied with the uniform norm, and when $\vl$ is a \Dc\ Banach lattice, we have, for $\npm\in N(\pset,\alg,\vl)$,
	\begin{equation}\label{eq:formula_for_regular_norm}
		\begin{split}	
			\rnorm{\opint{\npm}}&=\norm{\,\abs{\opint{\npm}}\,}\\
			&=\norm{\opint{\abs{\npm}}}\\
			&=\sup\set{\norm{\opint{\abs{\npm}}(f)}:f\in\vltwo,\,\zerofunction\leq f\leq\onefunction}.
		\end{split}
	\end{equation}
	Hence $\rnorm{\opint{\npm}}\leq \norm{\,\abs{\npm}(\pset)\,}$, and
	\begin{equation}\label{eq:norm_of_integral_operator}
		\rnorm{\opint{\npm}}=\norm{\,\abs{\npm}(\pset)\,}
	\end{equation}
	when $\onefunction\in\vltwo$. When $\vltwo$ is $\contcts$ or $\contots$ for a locally compact Hausdorff space, the results from \cite{de_jeu_jiang:2022b} that we use yield that \eqref{eq:norm_of_integral_operator} still holds, even though $\onefunction$ is not in $\contcts$ or $\contots$ when $\ts$ is not compact.
	
	Along these lines, we shall now establish a number of representation theorems for vector lattices and Banach lattices of norm to order bounded operators. In these results, it is understood that, when applicable, the norm on a vector lattice of regular operators is the regular norm, and that the norm on a vector lattice of $E$-valued measures is as in \cref{eq:norm_definition}, i.e., $\norm{\npm}=\norm{\,\abs{\npm}(\pset)\,}$.
	
	We start with cases where the domains are the spaces $\contcts$ or $\contots$ for non-empty locally compact Hausdorff spaces $\ts$. Our basis here consists of the following two representation theorems. They are \cite[Theorem~5.4]{de_jeu_jiang:2022b} and a slight specialisation of \cite[Theorem~4.2]{de_jeu_jiang:2022b}, respectively, formulated in the terminology of the present paper.
	
	\begin{theorem}\label{res:riesz_representation_theorem_for_contcts_finite_normal_case}
		Let $\ts$ be a non-empty locally compact Hausdorff space, and let $\os$ be a \Dc\ normal vector lattice. Take $\posop\in\pos{\nob(\contcts,\vl)}$. Then there exists a unique $\npm\in\posextrBmeas$ such that
		\[
		\posmap(f)=\ointm{f}
		\]
		for $f\in\contcts$. The measure $\npm$ is finite. If $V$ is a non-empty open subset of $\ts$, then
		\begin{equation*}
			\npm(V)=\setsup\set{\posmap(f) : f\in\contcts,\, \zerofunction\leq f\leq\onefunction,\, \supp f\subseteq V}
		\end{equation*}
		in $\os$. If $K$ is a compact subset of $\ts$, then
		\begin{equation*}
			\npm(K)=\setinf\set{\posmap(f) : f\in\contcts, \zerofunction\leq f\leq\onefunction,\  f\!\!\restriction_{K}=\onefunction\!\!\restriction_{K}}
		\end{equation*}
		in $\os$.
	\end{theorem}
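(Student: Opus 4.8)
The plan is to reduce the $\os$-valued statement to the classical scalar Riesz representation theorem by testing against the order-continuous dual of $\os$, which is exactly where the normality hypothesis enters. First I would fix $t\in\posos$ with $\abs{\posop f}\leq\norm{f}t$ for all $f$ (available because $\posop$ is norm to order bounded) and define the candidate measure on open sets: for a non-empty open $V\subseteq\ts$ put
\[
\npm(V)\coloneqq\setsup\set{\posop(f):f\in\contcts,\ \zerofunction\leq f\leq\onefunction,\ \supp f\subseteq V},
\]
and $\npm(\emptyset)\coloneqq 0$. Every element on the right is dominated by $t$, so by Dedekind completeness the supremum exists in $\posos$ and $\npm(V)\leq t$; this bound is the source of the finiteness asserted in the theorem. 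Monotonicity of $V\mapsto\npm(V)$ is immediate.

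The core of the argument is the gluing step. For each $x^\prime\in\pos{(\ocdualos)}$ the functional $x^\prime\circ\posop$ is positive and, since $\abs{x^\prime(\posop f)}\leq\norm{f}x^\prime(t)$, norm continuous on $\contcts$; the scalar Riesz theorem then provides a finite regular Borel measure $\npn_{x^\prime}$ representing it. Because the set $\set{\posop(f)}$ defining $\npm(V)$ is upward directed (it is closed under $f\mapsto f\vee g$) and $x^\prime$ is order continuous, $x^\prime$ commutes with the supremum, giving $x^\prime(\npm(V))=\npn_{x^\prime}(V)$ for all open $V$. I would then extend $\npm$ to all of $\borel$ by outer regularity, setting $\npm(\mss)\coloneqq\setinf\set{\npm(V):V\text{ open},\ \mss\subseteq V}$ (the infimum exists by \cref{res:completeness_properties_of_extended_space}, and again $\npm(\mss)\leq t$), and check $x^\prime(\npm(\mss))=\npn_{x^\prime}(\mss)$ using that the defining family is downward directed, the order continuity of $x^\prime$, and the outer regularity of $\npn_{x^\prime}$. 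Since $\os$ is normal, $\pos{(\ocdualos)}$ separates the points of $\os$, so these identities determine $\npm(\mss)$ unambiguously; uniqueness of the whole representing measure follows likewise, as any competitor produces the same $\npn_{x^\prime}$ by scalar uniqueness and hence the same $x^\prime(\npm(\cdot))$.

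With $x^\prime(\npm(\cdot))=\npn_{x^\prime}(\cdot)$ available for a separating family of order-continuous $x^\prime$, the remaining structure transfers coordinatewise. For $\sigma$-additivity one applies $x^\prime$ to both sides of \cref{eq:sigma_additivity}: scalar $\sigma$-additivity gives the left side and order continuity of $x^\prime$ moves it past the supremum on the right, after which separation yields the identity in $\pososext$. Inner regularity at open sets and outer regularity at Borel sets transfer in the same manner from the corresponding properties of the $\npn_{x^\prime}$, as does the compact-set formula (the relevant family $\set{\posop(f):\zerofunction\leq f\leq\onefunction,\ f\!\!\restriction_K=\onefunction\!\!\restriction_K}$ is downward directed). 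For the integral representation $\posop(f)=\ointm{f}$ I would again test against $x^\prime$: the left side becomes $(x^\prime\circ\posop)(f)=\int_\ts f\di{\npn_{x^\prime}}$, and I would match this with $x^\prime(\ointm{f})=\int_\ts f\di{(x^\prime\circ\npm)}$, whence separation gives the representation.

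The principal obstacle is the last interchange, namely the identity $x^\prime(\ointm{f})=\int_\ts f\di{(x^\prime\circ\npm)}$ linking the $\os$-valued order integral to the ordinary scalar integral of $x^\prime\circ\npm$. This is not among the properties of the order integral recorded in \cref{sec:preliminaries}, so I would establish it separately: for elementary $f$ it is just additivity and positive homogeneity of $x^\prime$, and for general $f$ it follows by monotone approximation together with the monotone convergence theorem for the order integral and the order continuity of $x^\prime$. A second, pervasive difficulty is that all the supremum and infimum manipulations above must be justified purely order-theoretically in $\pososext$, without the $\varepsilon$-estimates of the classical proof; the upward- and downward-directedness of the defining sets, together with \cref{res:completeness_properties_of_extended_space} and \cref{res:operations_in_extended_space}, are precisely what make these manipulations legitimate.
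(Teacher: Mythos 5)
First, a point of reference: the paper does not prove this theorem itself --- it is imported verbatim from \cite[Theorem~5.4]{de_jeu_jiang:2022b} --- so the comparison is with that proof and with the paper's stock technique. Your reduction to the scalar Riesz representation theorem by composing with functionals $x^\prime\in\pos{(\ocdualos)}$, using order continuity to move $x^\prime$ through the upward resp.\ downward directed suprema and infima, and normality to separate points, is exactly the device underlying the cited result; the same trick appears in this paper itself (proofs of part (2) of \cref{res:Dedekind_complete_normal} and of \cref{res:regular_is_nob_for_quasi-perfect}). Your construction of $\npm$, the transfer of regularity and $\sigma$-additivity, both displayed formulas, and the interchange identity $x^\prime(\ointm{f})=\int_\ts f\di{(x^\prime\circ\npm)}$ --- which you rightly flag as not among the facts in \cref{sec:preliminaries} and prove via elementary functions, monotone approximation, and order continuity --- are all sound. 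One bookkeeping item: transfer \emph{finite} additivity first (by the same separation argument), since the bound $\sum_{n=1}^N\npm(\mss_n)=\npm(\bigcup_{n=1}^N\mss_n)\leq t$ is what guarantees that the supremum of the partial sums exists in $\os$ before you can move $x^\prime$ past it.

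The one genuine gap is in uniqueness. The theorem asserts uniqueness within $\posextrBmeas$, whose elements may be infinite, and your argument --- ``any competitor produces the same $\npn_{x^\prime}$ by scalar uniqueness'' --- presupposes that $x^\prime\circ\npm^\prime$ is a $\sigma$-additive regular Borel measure for every representing $\npm^\prime$. If $\npm^\prime$ takes the value $\largest$, this can fail: with the convention $x^\prime(\largest)=\largest$, the $\sigma$-additivity of $x^\prime\circ\npm^\prime$ breaks down, because an increasing sequence that is unbounded in $\posos$ can have a bounded image under one fixed order continuous functional (take $\os=\RR^2$, $x^\prime(a,b)=a$, and the sequence $(0,n)$). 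The repair is short but must be said: every representing measure $\npm^\prime\in\posextrBmeas$ is automatically dominated by $t$. Indeed, for compact $K$, Urysohn provides $f\in\contcts$ with $\indicator{K}\leq f\leq\onefunction$, whence $\npm^\prime(K)\leq\orderintegral{\ts}{f}{\npm^\prime}=\posop(f)\leq t$; inner regularity at the open set $\ts$ gives $\npm^\prime(\ts)\leq t$, and monotonicity then bounds $\npm^\prime(\mss)$ for every Borel $\mss$. (The same sandwich --- $\posop(f)\leq\npm^\prime(V)$ when $\zerofunction\leq f\leq\onefunction$ and $\supp f\subseteq V$, and $\npm^\prime(K)\leq\posop(f)$ when $\indicator{K}\leq f$ --- in fact yields uniqueness purely order-theoretically, without the dual: any representing regular measure must satisfy the open-set supremum formula, and outer regularity then determines it on all of $\borel$.) With finiteness of competitors in hand, your scalar-uniqueness route goes through as written.
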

	
	\begin{theorem}\label{res:riesz_representation_theorem_for_contcts_normed_case}
		Let $\ts$ be a non-empty locally compact Hausdorff space, and let $\os$ be an order continuous Banach lattice. Take $\posop\in\pos{\regular(\contcts,\vl)}$. Then there exists a unique $\npm\in\posextrBmeas$ such that
		\[
		\posmap(f)=\ointm{f}
		\]
		for $f\in\contcts$. If $V$ is a non-empty open subset of $\ts$, then
		\begin{equation*}
			\npm(V)=\setsup\set{\posmap(f) : f\in\contots,\, \zerofunction\leq f\leq\onefunction,\, \supp f\subseteq V}
		\end{equation*}
		in $\osext$. If $K$ is a compact subset of $\ts$, then
		\begin{equation*}
			\npm(K)=\setinf\set{\posmap(f) : f\in\contots, \zerofunction\leq f\leq\onefunction,\  f\!\!\restriction_{K}=\onefunction\!\!\restriction_{K}}
		\end{equation*}
		in $\osext$. Hence $\npm$ is a finite measure if and only if $\posop\in\pos{\nob(\contcts,\vl)}$.
	\end{theorem}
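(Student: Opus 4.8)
The assertion divides into a cited part and one new conclusion. The existence and uniqueness of $\npm\in\posextrBmeas$, the identity $\posop(f)=\ointm{f}$ on $\contcts$, and the two displayed formulas for $\npm(V)$ and $\npm(K)$ are the content of \cite[Theorem~4.2]{de_jeu_jiang:2022b}, of which this theorem is the announced slight specialisation; so my first step is only to verify that the hypotheses hold. An order continuous Banach lattice $\os$ is \Dc\ and, as recorded in the text preceding \cref{res:examples_of_quasi-perfect_vector_lattices}, normal, so the cited theorem applies and delivers everything except the final equivalence. All that then remains is to prove that $\npm$ is finite if and only if $\posop\in\pos{\nob(\contcts,\os)}$.

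For that equivalence I would first obtain an expression for $\npm(\ts)$. Writing $S\coloneqq\set{\posop(f):f\in\contcts,\ \zerofunction\leq f\leq\onefunction}$, the representation $\posop(f)=\ointm{f}$ and monotonicity of the order integral give $\posop(f)\leq\npm(\ts)$ for every such $f$, while for each compact $K\subseteq\ts$ Urysohn's lemma furnishes an $f\in\contcts$ with $\indicator{K}\leq f\leq\onefunction$, so that $\npm(K)\leq\posop(f)$. Taking suprema and invoking the inner regularity of $\npm$ at the open set $\ts$ then yields
\[
\npm(\ts)=\setsup S
\]
in $\osext$. Since $\npm$ is finite exactly when $\npm(\ts)\in\posos$, i.e.\ when $\npm(\ts)\neq\largest$, part~\partref{part:operations_in_extended_space_4} of \cref{res:operations_in_extended_space} is decisive: $\setsup S=\largest$ if and only if $S$ is not bounded above by a finite element of $\os$. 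Hence $\npm$ is finite if and only if $S$ is order bounded above in $\os$ — equivalently, since $S\subseteq\posos$, order bounded.

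The last step is to match order boundedness of $S$ with norm to order boundedness of $\posop$. If $\posop\in\pos{\nob(\contcts,\os)}$, choose $t\in\posos$ with $\abs{\posop f}\leq\norm{f}t$ for all $f$; then $\posop(f)\leq t$ for every $f$ defining $S$, so $S$ is bounded above. Conversely, if $S$ is bounded above by $t\in\posos$, then for any $f\in\contcts$ with $\norm{f}\leq 1$ the functions $f^+,f^-\in\contcts$ satisfy $\zerofunction\leq f^\pm\leq\onefunction$, giving $\abs{\posop f}\leq\posop f^++\posop f^-\leq 2t$, and a rescaling shows $\posop$ is norm to order bounded. I expect the main obstacle to be conceptual rather than computational: the argument must be carried out in the extended cone $\pososext$, with part~\partref{part:operations_in_extended_space_4} of \cref{res:operations_in_extended_space} used to convert the statement ``$\npm(\ts)=\largest$'' into an order boundedness statement, and one must take care at the point where positivity of $\posop$ mediates the passage between the order interval $[\zerofunction,\onefunction]$ and the norm unit ball.
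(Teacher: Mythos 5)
Your proposal is correct and follows essentially the same route as the paper, which likewise obtains everything up to the final sentence by citing \cite[Theorem~4.2]{de_jeu_jiang:2022b} (an order continuous Banach lattice being \Dc\ and normal) and then reads off the concluding equivalence from the displayed formula for open sets at $V=\ts$: the identity $\npm(\ts)=\setsup\set{\posop(f): f\in\contcts,\,\zerofunction\leq f\leq\onefunction}$ in $\osext$ shows that $\npm$ is finite exactly when this set is order bounded in $\os$, i.e., exactly when $\posop$ is norm to order bounded, just as you argue. Your Urysohn-plus-inner-regularity rederivation of that supremum identity is sound but redundant, since it is precisely the $V=\ts$ instance of the formula you have already cited.
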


	Our first result applies to all order continuous Banach lattices.
	
	\begin{theorem}\label{res:Dedekind_complete_normal}
		Let $\ts$ be a non-empty locally compact Hausdorff space, and let $\vl$ be a \Dc\ normal vector lattice.
		Then the maps
		\begin{enumerate}
			\item\label{part:Dedekind_complete_normal_1} $\opint{}\colon\rBsmeas\to\nob(\contcts,\vl)$ and
			\item\label{part:Dedekind_complete_normal_2} $\opint{}\colon\rBsmeas\to\nob(\contots,\vl)$
		\end{enumerate}
		are surjective vector lattice isomorphisms.
		
		When $\ts$ is compact, the spaces $\nob(\contots,\vl)$ and $\nob(\contcts,\vl$) both coincide with $\regular(\contts,\vl)$.
	\end{theorem}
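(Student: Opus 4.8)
The plan is to establish part~\partref{part:Dedekind_complete_normal_1} first, deduce part~\partref{part:Dedekind_complete_normal_2} from it by a density argument, and finally dispatch the compact case using an observation already recorded in \cref{sec:norm_to_order_bounded_operators}.

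For part~\partref{part:Dedekind_complete_normal_1}, recall from the set-up preceding the theorem that $\opint{}\colon\rBsmeas\to\nob(\contcts,\vl)$ is a well-defined positive operator, so it suffices to show it is a bipositive bijection, since a bipositive linear bijection between vector lattices is automatically a vector lattice isomorphism. I would first show that $\opint{}$ restricts to a bijection from the cone $\posrBmeas$ onto $\pos{\nob(\contcts,\vl)}$: surjectivity onto the positive cone is precisely the existence assertion of \cref{res:riesz_representation_theorem_for_contcts_finite_normal_case} (every $\posop\in\pos{\nob(\contcts,\vl)}$ is represented by a measure which that theorem guarantees to be finite, hence $\posos$-valued), and injectivity on the cone is its uniqueness assertion. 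Now $\nob(\contcts,\vl)$ is a vector lattice, being an ideal in $\regular(\contcts,\vl)$ by \cref{res:nob_operators_are_ideal}, so $\nob(\contcts,\vl)=\pos{\nob(\contcts,\vl)}-\pos{\nob(\contcts,\vl)}$, while $\rBsmeas=\posrBmeas-\posrBmeas$ with $\pos{\rBsmeas}=\posrBmeas$ by \cref{res:regular_borel_measures_form_ideal,res:positive_cone_as_expected_Borel}. Linearity then upgrades the cone bijection to a bijection of the two spaces (injective because $\opint{\npm_1}=\opint{\npm_2}$ forces $\npm_1=\npm_2$ for positive $\npm_i$; surjective because any $\posop=\posop^+-\posop^-$ is hit), and since the generating cone $\posrBmeas$ is mapped \emph{onto} the generating cone $\pos{\nob(\contcts,\vl)}$, the bijection is bipositive.

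For part~\partref{part:Dedekind_complete_normal_2}, the only new point is that the measure representing the restriction to $\contcts$ already represents the operator on all of $\contots$. Given $\posop\in\nob(\contots,\vl)$, its restriction $\posop\!\restriction_{\contcts}$ lies in $\nob(\contcts,\vl)$, so by part~\partref{part:Dedekind_complete_normal_1} there is a unique $\npm\in\rBsmeas$ with $\opint{\npm}(f)=\posop(f)$ for $f\in\contcts$; note $\opint{\npm}(f)$ is defined for every $f\in\contots$ because $\npm$ is finite and such $f$ are bounded and Borel measurable. To see the representation persists on $\contots$, I would fix $f\in\contots$, choose $f_n\in\contcts$ with $\norm{f-f_n}\to 0$ (using norm density of $\contcts$ in $\contots$), and note that $\opint{\npm}-\posop$ is norm to order bounded, say $\abs{(\opint{\npm}-\posop)(g)}\leq\norm{g}\,t$ for all $g\in\contots$ and some $t\in\posos$. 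Since $(\opint{\npm}-\posop)(f_n)=0$, this yields $\abs{(\opint{\npm}-\posop)(f)}\leq\norm{f-f_n}\,t$ for every $n$, whence $\opint{\npm}(f)=\posop(f)$ by the Archimedean property of $\vl$. This gives surjectivity; injectivity is inherited from part~\partref{part:Dedekind_complete_normal_1} by restricting to $\contcts$, and the cone-onto-cone property (a positive $\posop$ restricts to a positive operator, hence has a positive representing measure) again forces bipositivity, so the map is a vector lattice isomorphism.

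Finally, when $\ts$ is compact one has $\contcts=\contots=\contts$, and $\contts$ carries the order unit $\onefunction$ whose order unit norm is exactly the uniform norm used throughout. The observation in \cref{sec:norm_to_order_bounded_operators} that $\regular(\vl,\vltwo)\subseteq\nob(\vl,\vltwo)$ whenever the domain is equipped with an order unit norm, combined with the general inclusion $\nob\subseteq\regular$ and the \Dc ness of $\vl$, then gives $\nob(\contts,\vl)=\regular(\contts,\vl)$, as claimed. The step I expect to be the main obstacle is the transfer in part~\partref{part:Dedekind_complete_normal_2} from $\contcts$ to $\contots$: because $\vl$ is only assumed \Dc\ and normal and \emph{not} a Banach lattice, the Banach-lattice extension machinery of \cref{res:restriction_is_isomorphism} is unavailable, and one must argue directly through the relatively uniform (order) estimate above and the Archimedean property rather than through a norm-completeness argument.
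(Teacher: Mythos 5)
Your proposal is correct, and parts \partref{part:Dedekind_complete_normal_1} and the compact case run exactly as in the paper: cone-onto-cone surjectivity and injectivity via the existence and uniqueness assertions of \cref{res:riesz_representation_theorem_for_contcts_finite_normal_case}, bipositivity from the cone bijection, and the order-unit-norm observation from \cref{sec:norm_to_order_bounded_operators} when $\ts$ is compact. Where you genuinely diverge is part \partref{part:Dedekind_complete_normal_2}. The paper transfers the identity $\posop(f)=\opint{\npm}(f)$ from $\contcts$ to $\contots$ by scalarising: for each $x^\prime\in\pos{(\ocdual{\vl})}$, the positive (hence automatically norm continuous) functionals $x^\prime\circ\posop$ and $x^\prime\circ\opint{\npm}$ on the Banach lattice $\contots$ agree on the norm dense sublattice $\contcts$ and are therefore equal, and the normality of $\vl$ (point separation by $\pos{(\ocdual{\vl})}$) upgrades this to equality in $\vl$; it does this for positive $\posop$ only and finishes by the cone argument of part \partref{part:Dedekind_complete_normal_1}. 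You instead note that $\opint{\npm}-\posop$ is norm to order bounded on $\contots$ (using $\abs{\opint{\npm}(g)}\leq\norm{g}\,\abs{\npm}(\ts)$ for the finite measure $\npm$) and vanishes on $\contcts$, so that $\abs{(\opint{\npm}-\posop)(f)}\leq\norm{f-f_n}\,t$ for approximants $f_n\in\contcts$, and the Archimedean property kills the difference. This is a valid and more elementary route: it localises the use of normality entirely to the representation theorem behind part \partref{part:Dedekind_complete_normal_1}, treats arbitrary rather than only positive $\posop\in\nob(\contots,\vl)$ in one stroke, and, as you correctly anticipate, sidesteps \cref{res:restriction_is_isomorphism} (unavailable since $\vl$ need not be norm complete) because the extension $\opint{\npm}$ already exists and only the equality of values needs proving. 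What the paper's scalarisation buys in exchange is brevity, given that the normality machinery is already in place for \cref{res:riesz_representation_theorem_for_contcts_finite_normal_case}.
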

	
	\begin{proof}
		We prove part~\partref{part:Dedekind_complete_normal_1}. We already know from \cref{res:riesz_representation_theorem_for_contcts_finite_normal_case}, that $\opint{}$ maps $\pos{\rBsmeas}$ onto $\pos{\nob(\contcts,\vl)}$, implying that $\opint{}$ is surjective. If $\opint{\npm}=0$, then $\opint{\npm^+}=\opint{\npm^-}$. By the uniqueness part of \cref{res:riesz_representation_theorem_for_contcts_normed_case}, we have $\npm^+=\npm^-$, so $\npm=0$. Hence $\opint{}$ is injective. Since $\opint{}\colon\pos{\rBsmeas}\to\pos{\nob(\contcts,\vl)}$ is surjective, $\opint{}$ is a bipositive surjection.

		We prove part~\partref{part:Dedekind_complete_normal_2}. Take $\posop\in\pos{\nob(\contots,\vl)}$. Again by \cref{res:riesz_representation_theorem_for_contcts_finite_normal_case}, there exists a $\npm\in\posrBmeas$ such that $\posop(f)=\opint{\npm}(f)$ for $f\in\contcts$. Take $x^\prime\in\pos{(\ocdual{\vl})}$, and consider the positive linear functionals $f\mapsto (x^\prime\circ \posop)(f)$ and $f\mapsto (x^\prime\circ\opint{\npm})(f)$ on $\contots$. They are continuous because $\contots$ is a Banach lattice; since they agree on $\contcts$, they are equal. As $\vl$ is normal, this implies that $\posop(f)=\opint{\npm}(f)$ for $f\in\contots$. Hence $\posop=\opint{\npm}$. The remainder of the proof is as in the proof of part~\partref{part:Dedekind_complete_normal_1}.
		
		The final statement is clear.
	\end{proof}
	
	Consequently, we have the following extension theorem, which is not a consequence of \cref{res:restriction_is_isomorphism} as $\vl$ need not be a Banach lattice.
	
	\begin{corollary}
		Let $\ts$ be a non-empty locally compact Hausdorff space, and let $\vl$ be a \Dc\ normal vector lattice. Then the restriction map induces a vector lattice isomorphism between $\nob(\contots,\vl)$  and $\nob(\contcts,\vl)$.
	\end{corollary}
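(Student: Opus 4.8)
The plan is to deduce the result directly from the two vector lattice isomorphisms furnished by \cref{res:Dedekind_complete_normal}. Both have the same domain $\rBsmeas$ and are implemented by the integration map $\opint{}$; they differ only in whether the integration operator is viewed on $\contcts$ or on $\contots$. Write $\Phi\colon\rBsmeas\to\nob(\contcts,\vl)$ and $\Psi\colon\rBsmeas\to\nob(\contots,\vl)$ for these two maps, so that for $\npm\in\rBsmeas$ the operator $\Phi(\npm)$ is $\opint{\npm}$ regarded on $\contcts$ and $\Psi(\npm)$ is $\opint{\npm}$ regarded on $\contots$.

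First I would verify that the restriction map $R\colon\posop\mapsto\posop\!\!\restriction_{\contcts}$ does send $\nob(\contots,\vl)$ into $\nob(\contcts,\vl)$. This is immediate from the definition: since $\contcts$ carries the same uniform norm as $\contots$, any $t\in\posos$ with $\abs{\posop x}\leq\norm{x}\,t$ for $x\in\contots$ satisfies the same inequality for all $x\in\contcts$, so $\posop\!\!\restriction_{\contcts}$ is again norm to order bounded. Thus $R$ is a well-defined (and evidently linear) map between the two vector lattices.

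The key step is the identity $R\circ\Psi=\Phi$. This holds because, for $\npm\in\rBsmeas$ and $f\in\contcts\subseteq\contots$, the integral $\ointm{f}$ is the same whether $f$ is regarded as an element of $\contcts$ or of $\contots$; hence restricting $\opint{\npm}$ from $\contots$ to $\contcts$ returns precisely $\opint{\npm}$ on $\contcts$. Since $\Psi$ is a bijection, it follows that $R=\Phi\circ\Psi^{-1}$ on all of $\nob(\contots,\vl)$, and as both $\Phi$ and $\Psi$ are vector lattice isomorphisms by \cref{res:Dedekind_complete_normal}, the map $R$ is the composition of a vector lattice isomorphism with the inverse of one, and is therefore itself a vector lattice isomorphism.

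I do not expect a genuine obstacle, as the substance is entirely contained in \cref{res:Dedekind_complete_normal}; the only points needing care are that restriction preserves norm to order boundedness and that the two integration maps are compatible with restriction, both of which are routine. It is, however, worth recording why this does not already follow from \cref{res:restriction_is_isomorphism}: in that lemma the codomain $\vltwo$ is required to be a Banach lattice, whereas here $\vl$ is only assumed to be a \Dc\ normal vector lattice.
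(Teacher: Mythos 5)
Your proof is correct and matches the paper's intended argument exactly: the corollary is stated there as an immediate consequence of \cref{res:Dedekind_complete_normal}, obtained precisely by composing the isomorphism onto $\nob(\contots,\vl)$ with the inverse of the one onto $\nob(\contcts,\vl)$, which is the restriction map since the two integration operators are compatible with restriction. Your closing remark about why \cref{res:restriction_is_isomorphism} does not apply (as $\vl$ need not be a Banach lattice) is also the observation the paper itself makes.
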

	
	When $\vl$ is not just \Dc\ and normal but even quasi-perfect, we know from \cref{res:regular_is_nob_for_quasi-perfect} that $\regular(\contots,\vl)=\nob(\contots,\vl)$. This is the case, for example, when $\vl$ is a KB-space or equal to $\regular(E)$ for a KB-space $\vl$; see \cref{res:examples_of_quasi-perfect_vector_lattices}.  \cref{res:Dedekind_complete_normal} then yields the first part of the next result. The proof of its second part is valid for quasi-perfect order continuous Banach lattices, but a moment's thought shows that these are simply the KB-spaces.
	
	\begin{corollary}\label{res:quasi-perfect}
		Let $\ts$ be a non-empty locally compact Hausdorff space.
		\begin{enumerate}
			\item\label{part:quasi-perfect_1} When $\vl$ is a quasi-perfect vector lattice, the map $\opint{}\colon\rBsmeas\to\regular(\contots,\vl)$ is a surjective vector lattice isomorphism.
			\item\label{part:quasi-perfect_2} When $\vl$ is a KB-space, the map $\opint{}\colon\rBsmeas\to\regular(\contots,\vl)$ is an isometric surjective vector lattice isomorphism between Banach lattices. Furthermore, these Banach lattices are AL-spaces if and only if $\vl$ is.
		\end{enumerate}
	\end{corollary}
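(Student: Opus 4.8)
The plan is to assemble the statement from results already in hand, so the labour is almost entirely one of matching hypotheses. For part~\partref{part:quasi-perfect_1} I would first observe that a quasi-perfect $\vl$ is normal by definition and \Dc\ by the remark following the definition of quasi-perfectness; consequently \cref{res:Dedekind_complete_normal}\partref{part:Dedekind_complete_normal_2} applies and shows that $\opint{}\colon\rBsmeas\to\nob(\contots,\vl)$ is a surjective vector lattice isomorphism. Because $\vl$ is quasi-perfect, \cref{res:regular_is_nob_for_quasi-perfect} identifies the codomain as $\nob(\contots,\vl)=\regular(\contots,\vl)$. Composing these two facts, $\opint{}\colon\rBsmeas\to\regular(\contots,\vl)$ is a surjective vector lattice isomorphism, which is part~\partref{part:quasi-perfect_1}.

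For part~\partref{part:quasi-perfect_2} I would reduce to part~\partref{part:quasi-perfect_1}. A KB-space is quasi-perfect by \cref{res:examples_of_quasi-perfect_vector_lattices} and is an order continuous Banach lattice, so part~\partref{part:quasi-perfect_1} at once gives that $\opint{}$ is a surjective vector lattice isomorphism. It is worth recording why KB-spaces are the exact class for which the argument runs: if $\vl$ is a quasi-perfect order continuous Banach lattice and $\net{x}$ is a norm bounded increasing net in $\posos$, then $\sup_\lambda x^\prime(x_\lambda)\leq\norm{x^\prime}\sup_\lambda\norm{x_\lambda}<\infty$ for every $x^\prime\in\pos{(\odualos)}=\pos{(\ndualos)}$, so quasi-perfectness supplies a supremum; thus the norm is Levi and $\vl$ is a KB-space, while conversely a KB-space is quasi-perfect and order continuous.

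It remains to upgrade the isomorphism to an isometry between Banach lattices and to settle the AL-space claim. Since $\vl$ is an order continuous Banach lattice, \cref{res:banach_lattice_of_signed_measures_topological_spaces} shows that $\rBsmeas$ is an order continuous Banach lattice, and $\regular(\contots,\vl)$ is a Banach lattice because $\vl$ is a \Dc\ Banach lattice (cf.\ \cref{res:norm_to_order_bounded_closed_ideal}). For the isometry I would invoke the general discussion preceding \cref{res:riesz_representation_theorem_for_contcts_finite_normal_case}: as $\opint{}$ is a vector lattice homomorphism by part~\partref{part:quasi-perfect_1} and $\vl$ is a \Dc\ Banach lattice, \eqref{eq:formula_for_regular_norm} holds, and the cited results from \cite{de_jeu_jiang:2022b} ensure that \eqref{eq:norm_of_integral_operator} persists for $\contots$ even when $\ts$ is non-compact. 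Hence $\rnorm{\opint{\npm}}=\norm{\,\abs{\npm}(\pset)\,}=\norm{\npm}$ for every $\npm\in\rBsmeas$, which is exactly the desired isometry.

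Finally, for the AL-space assertion I would transport the characterisation across the isometric lattice isomorphism: by \cref{res:banach_lattice_of_signed_measures_topological_spaces}\partref{part:banach_lattice_of_signed_measures_topological_space_1} and $\pset\neq\emptyset$, the Banach lattice $\rBsmeas$ is an AL-space precisely when $\vl$ is, and this property passes to $\regular(\contots,\vl)$ through $\opint{}$. The one genuinely delicate point I anticipate is the isometry in the non-compact case: there $\onefunction\notin\contots$, so the elementary identity \eqref{eq:norm_of_integral_operator} is not available directly, and the whole isometry rests on the fact, imported from \cite{de_jeu_jiang:2022b}, that it nonetheless holds for $\contots$.
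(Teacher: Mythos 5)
Your part~(1), your reduction of part~(2) to part~(1) via \cref{res:examples_of_quasi-perfect_vector_lattices}, your observation that the quasi-perfect order continuous Banach lattices are exactly the KB-spaces (this is precisely the ``moment's thought'' the paper alludes to, and your argument is right, using that $\odualos=\ndualos$ for a Banach lattice), and your transport of the AL-property via \cref{res:banach_lattice_of_signed_measures_topological_spaces} all coincide with the paper's proof.

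The one step that does not survive scrutiny is the isometry, and it is exactly the point you flagged as delicate. You justify $\rnorm{\opint{\npm}}=\norm{\,\abs{\npm}(\ts)\,}$ by citing the sentence before \cref{res:riesz_representation_theorem_for_contcts_finite_normal_case} which asserts that \eqref{eq:norm_of_integral_operator} ``still holds'' for $\contcts$ and $\contots$. But within the paper that sentence is a forward announcement: its only substantiation is the computation carried out in the proof of this very corollary (and \cref{res:order_continuous_banach_lattice_2} in turn refers back to this proof), so invoking it here is circular. The missing derivation is short and you have all the ingredients. By \eqref{eq:formula_for_regular_norm},
\[
\rnorm{\opint{\npm}}=\sup\set{\norm{\opint{\abs{\npm}}(f)}:f\in\contots,\,\zerofunction\leq f\leq\onefunction}\leq\norm{\,\abs{\npm}(\ts)\,}.
\]
For the reverse inequality, apply \cref{res:riesz_representation_theorem_for_contcts_finite_normal_case} to the open set $V=\ts$ to get $\abs{\npm}(\ts)=\setsup\set{\opint{\abs{\npm}}(f):f\in\contcts,\,\zerofunction\leq f\leq\onefunction}$; this set of values is upward directed because $\pos{\unitball(\contcts)}$ is and $\opint{\abs{\npm}}$ is positive, so the order continuity of the norm of $\vl$ yields $\norm{\,\abs{\npm}(\ts)\,}=\sup\set{\norm{\opint{\abs{\npm}}(f)}:f\in\contcts,\,\zerofunction\leq f\leq\onefunction}\leq\rnorm{\opint{\npm}}$. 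With these two inequalities inserted in place of the appeal to the preamble remark, your proof is complete and agrees with the paper's.
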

	
	\begin{proof}
		For part~\partref{part:quasi-perfect_2}, we recall from \cref{eq:formula_for_regular_norm} that
		\[
		\rnorm{\opint{\npm}}=\sup\set{\norm{\opint{\abs{\npm}}(f)}:f\in\contots,\,\zerofunction\leq f\leq\onefunction}\leq\norm{\abs{\npm}(X)}.
		\]
		On the other hand, we have from \cref{res:riesz_representation_theorem_for_contcts_finite_normal_case} that
		\[
		\abs{\npm}(\ts)=\sup\set{\opint{\abs{\npm}}(f):f\in\contcts,\,\zerofunction\leq f\leq\onefunction}.
		\]
		The order continuity of $\vl$ then implies that
		\[
		\norm{\abs{\npm}(\ts)}=\sup\set{\norm{\opint{\abs{\npm}}(f)}:f\in\contcts,\,\zerofunction\leq f\leq\onefunction}\leq \rnorm{\opint{\npm}}.
		\]
		Hence $I$ is isometric. The final statement is from \cref{res:banach_lattice_of_signed_measures_topological_spaces}.
		
	\end{proof}
	
	When $\vl$ is an order continuous Banach lattice, \cref{res:norm_to_order_bounded_closed_ideal} implies that $\nob(\contcts,\vl)$ and $\nob(\contots,\vl)$ are Banach lattices. They are isometrically isomorphic by part~\partref{part:restriction_is_isomorphism_3} of \cref{res:restriction_is_isomorphism}. This is also seen from our next result.
	
	\begin{theorem}\label{res:order_continuous_banach_lattice_2}
		Let $\ts$ be a non-empty locally compact Hausdorff space, and let $\vl$ be an order continuous Banach lattice.
		Then the maps
		\begin{enumerate}
			\item\label{part:order_continuous_banach_lattice_2_a} $\opint{}\colon\rBsmeas\to\nob(\contcts,\vl)$ and
			\item\label{part:order_continuous_banach_lattice_2_b} $\opint{}\colon\rBsmeas\to\nob(\contots,\vl)$
		\end{enumerate} are isometric surjective vector lattice isomorphisms between Banach lattices.  Furthermore, these Banach lattices are AL-spaces, have a Levi norm, or are KB-spaces if and only if $\vl$ has the pertinent property.
		
		When $\ts$ is compact, the spaces $\nob(\contots,\vl)$ and $\nob(\contcts,\vl$) both coincide with $\regular(\contts,\vl)$.
	\end{theorem}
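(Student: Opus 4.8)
The plan is to read off the algebraic content from \cref{res:Dedekind_complete_normal} and then supply the metric information. Since an order continuous Banach lattice is \Dc\ and normal, \cref{res:Dedekind_complete_normal} already gives that both maps $\opint{}$ of the theorem are surjective vector lattice isomorphisms. That all the spaces involved are Banach lattices is also at hand: $\rBsmeas$ is a band in the order continuous Banach lattice $\smeasts$ by \cref{res:banach_lattice_of_signed_measures_topological_spaces}, while $\nob(\contcts,\vl)$ and $\nob(\contots,\vl)$ are norm closed ideals in the Banach lattices $\regular(\contcts,\vl)$ and $\regular(\contots,\vl)$ by \cref{res:norm_to_order_bounded_closed_ideal}, the positive unit balls of $\contcts$ and $\contots$ being upward directed. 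So the only substantive point left is the isometry, after which the remaining assertions fall out quickly.

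For part \partref{part:order_continuous_banach_lattice_2_a} I would prove $\rnorm{\opint{\npm}}=\norm{\npm}=\norm{\,\abs{\npm}(\ts)\,}$ by identifying both sides with one and the same supremum. On the one hand, since $\opint{}$ is a vector lattice homomorphism, \cref{eq:formula_for_regular_norm} gives $\rnorm{\opint{\npm}}=\sup\set{\norm{\opint{\abs{\npm}}(f)}:f\in\contcts,\ \zerofunction\leq f\leq\onefunction}$. On the other hand, applying \cref{res:riesz_representation_theorem_for_contcts_finite_normal_case} to the positive operator $\opint{\abs{\npm}}$ at the open set $V=\ts$ (its representing measure being $\abs{\npm}$ by uniqueness) recovers $\abs{\npm}(\ts)=\sup\set{\opint{\abs{\npm}}(f):f\in\contcts,\ \zerofunction\leq f\leq\onefunction}$ as an upward directed supremum in $\vl$; order continuity of the norm then lets the norm pass through this supremum, giving $\norm{\,\abs{\npm}(\ts)\,}=\sup\set{\norm{\opint{\abs{\npm}}(f)}:f\in\contcts,\ \zerofunction\leq f\leq\onefunction}$. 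Comparing the two expressions yields the isometry.

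For part \partref{part:order_continuous_banach_lattice_2_b} I would avoid repeating the computation by invoking \cref{res:isometric_isomorphism_between_banach_lattices_via_restriction} for the norm dense vector sublattice $\contcts$ of $\contots$: the restriction map $\nob(\contots,\vl)\to\nob(\contcts,\vl)$ is an isometric vector lattice isomorphism, and it clearly intertwines the two integration maps, so part \partref{part:order_continuous_banach_lattice_2_b} follows from part \partref{part:order_continuous_banach_lattice_2_a}. The AL-space, Levi norm, and KB-space statements then transfer along these isometric isomorphisms from $\rBsmeas$, where they are controlled by \cref{res:banach_lattice_of_signed_measures_topological_spaces}. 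For the compact case, $\contcts=\contots=\contts$, which carries the order unit $\onefunction$ whose order unit norm is the uniform norm, so the observation preceding \cref{res:nob_operators_are_ideal} gives $\nob(\contts,\vl)=\regular(\contts,\vl)$ because $\vl$ is \Dc.

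I expect the lower bound in the isometry of part \partref{part:order_continuous_banach_lattice_2_a} to be the delicate step. When $\ts$ is non-compact one has $\onefunction\notin\contcts$, so one cannot simply read off $\rnorm{\opint{\npm}}=\norm{\,\abs{\npm}(\ts)\,}$ from \cref{eq:norm_of_integral_operator}; the crux is that \cref{res:riesz_representation_theorem_for_contcts_finite_normal_case} still recovers $\abs{\npm}(\ts)$ as a directed supremum over $\contcts$, and order continuity of the norm is precisely what is needed to interchange the norm with that supremum.
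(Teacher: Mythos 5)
Your proposal is correct and follows essentially the same route as the paper's proof: the vector lattice isomorphism is quoted from \cref{res:Dedekind_complete_normal}, the isometry is obtained by combining \cref{eq:formula_for_regular_norm} with the directed supremum formula of \cref{res:riesz_representation_theorem_for_contcts_finite_normal_case} at $V=\ts$ and the order continuity of the norm (exactly the computation in the proof of part~\partref{part:quasi-perfect_2} of \cref{res:quasi-perfect}, which the paper's proof invokes), and the AL-space, Levi norm, and KB-space transfers come from \cref{res:banach_lattice_of_signed_measures_topological_spaces}. Your one deviation---deducing part~\partref{part:order_continuous_banach_lattice_2_b} from part~\partref{part:order_continuous_banach_lattice_2_a} via the restriction isometry of \cref{res:isometric_isomorphism_between_banach_lattices_via_restriction} rather than repeating the norm computation for $\contots$---is valid, since restriction clearly intertwines the two integration maps, and is in fact the alternative the paper itself mentions immediately before the theorem.
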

	
	\begin{proof}
		The fact that $\opint{}$ is a surjective vector lattice isomorphism in both cases is a specialisation of \cref{res:Dedekind_complete_normal}.
		The proof that it is an isometry is in both cases as in the proof of part~\partref{part:quasi-perfect_2} of \cref{res:quasi-perfect}.
		The penultimate statement is from \cref{res:banach_lattice_of_signed_measures_topological_spaces}, and the final one is clear.
	\end{proof}

	We conclude our treatment of norm to order bounded operators with a representation theorem that is independent of the earlier material in this paper and of \cite{de_jeu_jiang:2022b}.
	
	\begin{theorem}\label{res:bounded_measurable_functions_as_domain}
		Let $\ms$ be a measurable space where $\pset\neq\emptyset$, and let $\vl$ be a \Dc\ vector lattice. Then $\opint{}\colon\smeas\to\soc(\boundedmeasfun,\vl)$ is a surjective vector lattice isomorphism. When $\vl$ is a \Dc\ Banach lattice, $\opint{}$ is an isometry between Banach lattices. In this case, these Banach lattices are AL-spaces, are order continuous, have a Levi norm, or are KB-spaces if and only if $\vl$ has the pertinent property.
	\end{theorem}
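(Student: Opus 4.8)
The plan is to show that $\opint{}$ is a bipositive linear bijection onto $\soc(\boundedmeasfun,\vl)$, which forces it to be a vector lattice isomorphism, and then to read off the isometry and the transfer of Banach lattice properties from the material already assembled. The one genuinely substantial step is surjectivity; everything else is bookkeeping built on the convergence theorems for the order integral and on the fact that $\boundedmeasfun$ is an AM-space with order unit $\onefunction=\indicator{\pset}$.

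First I would check that $\opint{}$ lands in $\soc(\boundedmeasfun,\vl)$. For $\npn\in\posmeas$ the operator $\opint{\npn}$ is positive, and since $\lrabs{\opint{\npn}(f)}\le\opint{\npn}(\abs f)\le\norm f\,\npn(\pset)$ it is order bounded. The dominated (or monotone) convergence theorem for the order integral from \cite{de_jeu_jiang:2022a} shows that $\opint{\npn}$ is $\sigma$-order continuous: if $0\le f_n\uparrow f$ pointwise in $\boundedmeasfun$, then $\opint{\npn}(f_n)\uparrow\opint{\npn}(f)$. Writing a general $\npm\in\smeas$ as $\npn_1-\npn_2$ with $\npn_i\in\posmeas$ then gives $\opint{\npm}\in\soc(\boundedmeasfun,\vl)$. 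Injectivity is immediate, since $\opint{\npm}(\indicator{\mss})=\npm(\mss)$ for every $\mss\in\alg$.

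For surjectivity, take $\posop\in\soc(\boundedmeasfun,\vl)$. Because $\vl$ is \Dc, the operator $\posop$ has a modulus and the positive parts $\posop^{\pm}$ are again order bounded and $\sigma$-order continuous (the $\sigma$-order continuous operators form a band in $\ob(\boundedmeasfun,\vl)$). I would then set $\npm^{\pm}(\mss)\coloneqq\posop^{\pm}(\indicator{\mss})$ and verify that each $\npm^{\pm}$ is a $\posos$-valued measure: positivity and $\npm^{\pm}(\emptyset)=0$ are clear, and $\sigma$-additivity in the sense of \cref{def:positive_pososext_valued_measure} follows from $\sigma$-order continuity, since $\indicator{\bigcup_{n\le N}\mss_n}\uparrow\indicator{\bigcup_{n}\mss_n}$ in $\boundedmeasfun$ forces $\sum_{n\le N}\npm^{\pm}(\mss_n)\uparrow\npm^{\pm}(\bigcup_n\mss_n)$. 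Thus $\npm\coloneqq\npm^{+}-\npm^{-}\in\smeas$ and $\opint{\npm}$ agrees with $\posop$ on all characteristic functions, hence on all simple functions. Finally, approximating an arbitrary $f\in\boundedmeasfun$ from below by simple functions $s_n\uparrow f^{+}$ and $s_n^\prime\uparrow f^{-}$ and invoking the $\sigma$-order continuity of both $\opint{\npm}$ and $\posop$ upgrades this agreement to $\opint{\npm}=\posop$. This is the step I expect to be the crux: the whole argument turns on $\sigma$-order continuity being exactly the property that both makes the candidate set function countably additive and transports equality from simple functions to all bounded measurable functions.

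To conclude, I would note that $\opint{}$ is bipositive: it is positive by \cref{res:positive_cone_as_expected} (as $\opint{\npm}\ge 0$ for $\npm\in\posmeas$), and its inverse is positive since $\opint{\npm}\ge 0$ gives $\npm(\mss)=\opint{\npm}(\indicator{\mss})\ge 0$. A bipositive linear bijection is a vector lattice isomorphism. When $\vl$ is a \Dc\ Banach lattice, $\opint{}$ is a lattice homomorphism and $\onefunction\in\boundedmeasfun$, so \cref{eq:norm_of_integral_operator} gives $\rnorm{\opint{\npm}}=\norm{\,\abs{\npm}(\pset)\,}=\norm{\npm}$; equivalently, $\abs{\opint{\npm}}=\opint{\abs{\npm}}$ is positive and its operator norm on the AM-space $\boundedmeasfun$ equals $\norm{\opint{\abs{\npm}}(\onefunction)}=\norm{\,\abs{\npm}(\pset)\,}$. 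The statement about AL-spaces, order continuity, Levi norms, and KB-spaces then transfers verbatim through this isometric isomorphism from \cref{res:banach_lattice_of_signed_measures_general}.
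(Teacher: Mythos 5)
Your proposal is correct and takes essentially the same route as the paper's own proof: membership in $\soc(\boundedmeasfun,\vl)$ via the dominated convergence theorem for the order integral, surjectivity by setting $\npm^{\pm}(\mss)\coloneqq\posop^{\pm}(\indicator{\mss})$ and using $\sigma$-order continuity (you merely spell out the simple-function approximation that the paper compresses into ``it follows easily from the definition of the integral''), bipositivity via evaluation at characteristic functions, the isometry from \cref{eq:norm_of_integral_operator} since $\onefunction\in\boundedmeasfun$, and the transfer of the Banach lattice properties from \cref{res:banach_lattice_of_signed_measures_general}. There are no gaps.
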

	
	\begin{proof}
		If $\seq{f}\subset\boundedmeasfun$ and $f\in\boundedmeasfun$, then $f_n\overset{\sigma\tfs{oc}}{\rightarrow}f$ if and only if $\set{f_n:f\geq 1}$ is order bounded and $f_n\to f$ pointwise. An appeal to the dominated convergence theorem (see \cite[Theorem~6.13]{de_jeu_jiang:2022a}) therefore yields that, indeed, $\opint{\npm}\in\soc(\boundedmeasfun,\vl)$ for $\npm\in\smeas$.
		
		To see that $\opint{}\colon\smeas\to\soc(\boundedmeasfun,\vl)$ is surjective, take a $\posop$ in $\soc(\boundedmeasfun,\vl)$. Then $\posop^\pm\in\soc(\boundedmeasfun,\vl)$. For $\mss\in\alg$, set $\npm^\pm(\mss)=\posop^\pm(\chi_\mss)$. The order continuity of $\posmap^\pm$ implies that $\npm^\pm\in\posmeas$. It follows easily from the definition of the integral and the $\sigma$-order continuity of $\posop^\pm$ that $\posmap^\pm=\opint{\npm^\pm}$, so that $\posop=\opint{\npm^+-\npm^-}$. Since $\opint{\npm}(\chi_\mss)=\npm(\mss)$ for $\npm\in\smeas$ and $\mss\in\alg$ by construction, $\opint{}$ is bipositive. Hence $\opint{}$ is a vector lattice isomorphism.
		
		When $\vl$ is a \Dc\ Banach lattice, we have $\rnorm{\opint{\npm}}=\norm{\abs{\npm}(\pset)}$ by \cref{eq:norm_of_integral_operator} as $\onefunction\in\boundedmeasfun$.
		
		The final statement is from \cref{res:banach_lattice_of_signed_measures_general}.
	\end{proof}

	\begin{remark}
		The proofs that the operators $\opint{}$ in the above representation theorems in this section are bipositive surjections do not use that the domains are vector lattices. The fact that the codomains are vector lattices is sufficient for this, and it \emph{implies} that the domains are vector lattices. Whereas this is hardly an efficient way to establish the latter, it is of some interest to use the isomorphism in  \cref{res:bounded_measurable_functions_as_domain} to determine what the lattice operations in $\smeas$ are. Observing that $\boundedmeasfun$ has the principal projection property as it is \sDc, we have, for $\npm,\npn\in\smeas$ and $\mss\in\alg$, using \cite[Theorem~1.50]{aliprantis_burkinshaw_POSITIVE_OPERATORS_SPRINGER_REPRINT:2006} in the third step, that
		\begin{align*}
			(\npm\vee\npn&)(\mss)=\opint{\npm\vee\npn}(\chi_\mss)\\
			&=(\opint{\npm}\vee\opint{\npn})(\chi_\mss)\\
			&=\!\setsup\!\set{\!\opint{\npm}(f_1)+\opint{\npn}(f_2)\!:\!f_1,f_2\!\in\!\boundedmeasfun, f_1+f_2=\chi_\mss, f_1\wedge f_2=0\!}\\
			&=\!\setsup\!\set{\!\opint{\npm}(\chi_\msstwo)+\opint{\npn}(\chi_\mss-\chi_\msstwo):\msstwo\in\alg\text{ and }\msstwo\subseteq\mss\!}\\
			&=\!\setsup\!\set{\!\npm(\msstwo)+\npn(\mss\setminus\msstwo):\msstwo\in\alg\text{ and }\msstwo\subseteq\mss\!}.
		\end{align*}
		We have thus retrieved the formula for the supremum of two $\os$-valued measures in \cref{eq:sup_formula_E-valued}. The formula for the infimum in \cref{eq:inf_formula_E-valued} can similarly be seen as a Riesz-Kantorovich formula for operators.

		When $\vl$ is a \Dc\ Banach lattice, the vector lattice isomorphism in \cref{res:bounded_measurable_functions_as_domain} allows us to \emph{introduce} a vector lattice norm on $\smeas$ by setting $\norm{\npm}\coloneqq\rnorm{\opint{\npm}}$ for $\npm\in\smeas$, i.e., by setting $\norm{\npm}\coloneqq\norm{\,\abs{\npm}(\pset)\,}$. We know that $\soc(\boundedmeasfun,\vl)$ is a band in the Banach lattice $\regular(\boundedmeasfun,\vl)$, so it is norm closed. We have thus retrieved the completeness statement in \cref{res:banach_lattice_of_signed_measures_general}.
		
		When $\ts$ is a locally compact Hausdorff space, one can infer the formulas for the supremum and infimum for $\rBsmeas$ by showing that it is a vector sublattice of $\smeasts$. For this, one proceeds as in \cref{sec:cone_of_regular_borel_measures}. The first step is to show that $\posrBmeas$ is closed under addition. This is easy; see the proof of \cref{res:regular_measures_form_a_lattice_cone}). The second step is to show that $\npn\in\posrBmeas$ when $\npn\in\posmeasts$ and $\npn\leq\npm$ for some $\npm\in\posrBmeas$, which is an immediate consequence of \cref{res:regularity_transferred_abstract}. After that, one proves as in \cref{res:regular_borel_measures_form_ideal} that $\rBsmeas$ is even an ideal in $\smeasts$.
		
	\end{remark}

	Finally, we establish a representation theorem for vector lattices of regular operators that need not be norm to order bounded. Clearly, infinite measures will then be needed.
	
	\begin{theorem}\label{res:order_continuous_banach_lattice_1}
		Let $\ts$ be a non-empty locally compact Hausdorff space, and let $\vl$ be an order continuous Banach lattice. Then:
		\begin{enumerate}
			\item\label{part:order_continuous_banach_lattice_1}the cone map $\opint{}\colon\posextrBmeas\to\pos{\regular(\contcts,\vl)}$ is a surjective order isomorphism;
			\item\label{part:order_continuous_banach_lattice_2} for $\posop\in\regular(\contcts,\vl)$, there exist $\npm_1,\npm_2\in\posextrBmeas$ such that $\posop(f)=\opint{\npm_1}(f)-\opint{\npm_2}(f)$ for $f\in\contcts$.
		\end{enumerate}
	\end{theorem}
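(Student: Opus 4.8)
The plan is to read both parts off the representation theorem \cref{res:riesz_representation_theorem_for_contcts_normed_case}, which already carries all the substantial analysis; the remaining work is to check that $\opint{}$ is well defined on possibly infinite measures and then to establish the order-theoretic claims. First I would verify that $\opint{}$ really maps $\posextrBmeas$ into $\pos{\regular(\contcts,\vl)}$. For $\npm\in\posextrBmeas$ and $f\in\contcts$ with compact support $K$ one has $\abs{f}\leq\norm{f}\indicator{K}$, so $\orderintegral{\ts}{\abs{f}}{\npm}\leq\norm{f}\,\npm(K)<\infty$ because $\npm(K)\in\os$ for a Borel measure $\npm$; hence $\orderintegral{\ts}{f^{\pm}}{\npm}$ are finite and $\opint{\npm}(f)$ is a well-defined element of $\vl$. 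Linearity in $f$ and positivity of the integral give $\opint{\npm}\in\pos{\regular(\contcts,\vl)}$, while additivity and positive homogeneity of the integral in the measure (which, for $f\in\contcts$, reduce to finite-measure computations on the support) make $\opint{}$ a cone map.

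For part~\partref{part:order_continuous_banach_lattice_1}, surjectivity and injectivity of $\opint{}$ are precisely the existence and uniqueness assertions of \cref{res:riesz_representation_theorem_for_contcts_normed_case}. The forward order implication, $\npm_1\leq\npm_2\Rightarrow\opint{\npm_1}\leq\opint{\npm_2}$, follows from the monotonicity of the order integral in the measure (immediate from the definition via increasing sequences of elementary functions), applied to $f\in\contcts$ with $f\geq\zerofunction$.

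For the reverse implication, suppose $\opint{\npm_1}\leq\opint{\npm_2}$. Then $\posoptwo\coloneqq\opint{\npm_2}-\opint{\npm_1}$ is a positive, hence regular, operator from $\contcts$ to $\vl$, so by the surjectivity just noted $\posoptwo=\opint{\sigma}$ for some $\sigma\in\posextrBmeas$. Since $\npm_1+\sigma\in\posextrBmeas$ by \cref{res:lattice_properties_of_measures_on_topological_spaces}, and since $\opint{\npm_1+\sigma}=\opint{\npm_1}+\opint{\sigma}=\opint{\npm_2}$ by additivity of the integral in the measure, injectivity forces $\npm_2=\npm_1+\sigma$; as $\sigma$ takes values in $\pososext$, this yields $\npm_1(\mss)\leq\npm_2(\mss)$ for all $\mss\in\borel$, i.e.\ $\npm_1\leq\npm_2$. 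This difference trick is the one place a small idea is needed, and it is exactly what lets one avoid recovering the measure from the operator through the explicit supremum and infimum formulas.

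Finally, part~\partref{part:order_continuous_banach_lattice_2} is immediate. As $\vl$ is an order continuous Banach lattice it is \Dc, so $\regular(\contcts,\vl)$ is a vector lattice and any $\posop\in\regular(\contcts,\vl)$ splits as $\posop=\pos{\posop}-\negt{\posop}$ with $\pos{\posop},\negt{\posop}\in\pos{\regular(\contcts,\vl)}$. Applying part~\partref{part:order_continuous_banach_lattice_1} to $\pos{\posop}$ and $\negt{\posop}$ produces $\npm_1,\npm_2\in\posextrBmeas$ with $\posop(f)=\opint{\npm_1}(f)-\opint{\npm_2}(f)$ for $f\in\contcts$. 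The only genuine obstacles in the whole argument are the finiteness check for infinite measures in the first step and the reverse order implication; everything else is assembly built on the cited representation theorem.
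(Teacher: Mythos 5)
Your proof is correct, but the key step — bipositivity of $\opint{}$ — is handled by a genuinely different argument than the paper's. The paper recovers the measure from the operator: by \cref{res:riesz_representation_theorem_for_contcts_normed_case}, $\npm(V)$ is the supremum of $\opint{\npm}(f)$ over $f\in\contcts$ with $\zerofunction\leq f\leq\onefunction$ and $\supp f\subseteq V$, so $\opint{\npm}\leq\opint{\npn}$ gives $\npm(V)\leq\npn(V)$ on open sets directly, and outer regularity transfers this to all Borel sets. Your difference trick instead argues algebraically: $\opint{\npn}-\opint{\npm}$ is positive, hence equals $\opint{\sigma}$ by surjectivity, and injectivity applied to $\opint{\npm+\sigma}=\opint{\npn}$ yields $\npn=\npm+\sigma\geq\npm$. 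This is a clean abstraction — it works for any additive bijection between a cone of measures closed under addition and a cone of positive operators, and it avoids the explicit supremum formulas and regularity entirely. What it costs is two auxiliary verifications the paper's route does not need: that $\npm+\sigma$ stays in $\posextrBmeas$ (this is part~\partref{part:lattice_properties_of_measures_on_topological_spaces_1} of \cref{res:lattice_properties_of_measures_on_topological_spaces}, which you cite), and that $\opint{}$ is additive in the measure on the \emph{extended} cone, since the paper's general set-up in \cref{sec:riesz_representation_theorems} states additivity only for finite measures; your reduction to the compact support, where both measures are finite by the Borel-measure property, plugs that hole, though one should note that it uses the (routine) fact that elementary approximants of a positive $f$ vanishing off $K$ can be chosen supported in $K$. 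Your well-definedness check and the deduction of part~\partref{part:order_continuous_banach_lattice_2} via $\posop=\pos{\posop}-\negt{\posop}$ match what the paper leaves implicit.
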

	
	\begin{proof}
		We prove part~\partref{part:order_continuous_banach_lattice_1}. It is immediate from \cref{res:riesz_representation_theorem_for_contcts_normed_case} that $\opint{}$ is an injective surjection. It is obviously increasing. Suppose that $\opint{\npm}\leq\opint{\npn}$. By \cref{res:riesz_representation_theorem_for_contcts_normed_case} we have, for an open subset $V$ of $\ts$,
		\[
		\npm(V)=\setsup\set{\opint{\npm}(f): f\in\contcts,\, \zerofunction\leq f\leq\onefunction,\, \supp f\subseteq V},
		\]
		and similarly for $\npn$. Hence $\npm(V)\leq\npn(V)$ for all open subsets $V$ of $\ts$. The outer regularity of $\npm$ and $\npn$ then implies that  $\npm\leq\npn$.
		
		Part part~\partref{part:order_continuous_banach_lattice_2} is immediate from part~\partref{part:order_continuous_banach_lattice_1}.
	\end{proof}
	
	\begin{remark}
		For $\vl=\RR$, part~\partref{part:order_continuous_banach_lattice_1} of \cref{res:order_continuous_banach_lattice_2} gives the familiar isometric isomorphism between the real-valued regular Borel measures on $\ts$ and the norm dual $\contcts^\ast$ of $\contcts$ (see \cite[Theorem~38.7]{aliprantis_burkinshaw_PRINCIPLES_OF_REAL_ANALYSIS_THIRD_EDITION:1998}). Part~\partref{part:order_continuous_banach_lattice_2} of \cref{res:order_continuous_banach_lattice_1} determines its order dual $\odual{\contcts}$.
	\end{remark}

	\subsection*{Acknowledgements} The authors thank Jan Harm van der Walt for providing the references in \cref{rem:band}, and the anonymous referee for their detailed reading of the manuscript. The results in this paper were obtained, in part, during visits of the first author to Sichuan University and of the second author to Leiden University. The generous support by the Erasmus+ ICM programme that made these possible is gratefully acknowledged.  
	
	\bibliographystyle{plain}
	\urlstyle{same}
	
	\bibliography{general_bibliography}
	
\end{document}